  \theoremstyle{remark}
  \newtheorem*{acknowledgement*}{\protect\acknowledgementname}
  \theoremstyle{plain}
  \newtheorem{theorem}{\protect\theoremname}[section]
  \theoremstyle{plain}
  \theoremstyle{plain}
  \newtheorem{definition}[theorem]{\protect\definitionname}
  \theoremstyle{plain}
  \theoremstyle{plain}
  \theoremstyle{plain}
  \newtheorem{lemma}[theorem]{\protect\lemmaname}
  \theoremstyle{plain}
  \newtheorem{proposition}[theorem]{\protect Proposition}
  \theoremstyle{plain}
  \theoremstyle{plain}
  \newtheorem{corollary}[theorem]{\protect Corollary}
  \providecommand{\acknowledgementname}{Acknowledgement}
  \providecommand{\definitionname}{Definition}
  \providecommand{\examplename}{Example}
  \providecommand{\lemmaname}{Lemma}
  \providecommand{\remarkname}{Remark}
\providecommand{\theoremname}{Theorem}
\newcounter{todocounter}
\newlength{\todowidthinner}
\definecolor{grey}{rgb}{0.5,0.5,0.5}
\definecolor{mhcol}{rgb}{0,0.7,0}
\definecolor{jpcol}{rgb}{0.7,0.7,0}
\begin{document}

\global\long\def\Cc{\mathbb{C}}
\global\long\def\R{\mathbb{R}}
\global\long\def\N{\mathbb{N}}
\global\long\def\Q{\mathbb{Q}}
\global\long\def\Zz{\mathbb{Z}}

\global\long\def\jump#1{\lsem#1\rsem}
\global\long\def\fc{\mathfrak{c}}
\global\long\def\fa{\mathfrak{a}}
\global\long\def\fA{\mathfrak{A}}

\global\long\def\cP{\mathcal{P}}
\global\long\def\cR{\mathcal{R}}
\global\long\def\cS{\mathcal{S}}
\global\long\def\cT{\mathcal{T}}
\global\long\def\cN{\mathcal{N}}
\global\long\def\sF{\mathscr{F}}
\global\long\def\cC{\mathcal{C}}
\global\long\def\cL{\mathcal{L}}
\global\long\def\cE{\mathcal{E}}
\global\long\def\cF{\mathcal{F}}
\global\long\def\cG{\mathcal{G}}
\global\long\def\cH{\mathcal{H}}
\global\long\def\cM{\mathcal{M}}
\global\long\def\cT{\mathcal{T}}
\global\long\def\cW{\mathcal{W}}
\global\long\def\cV{\mathcal{V}}
\global\long\def\cO{\mathcal{O}}

\global\long\def\OP{\mathcal{C}}

\global\long\def\M{M}

\newcommand{\PHk}{\Pi_{{\mathcal H}_k}}
\newcommand{\PSk}{\Pi_{{\mathcal S}_k}}

\newcommand{\ukh}{u_{{\mathcal S}_k}}
\newcommand{\const}{C_\Gamma}
\newcommand{\abs}[1]{\left\vert #1 \right\vert}
\newcommand{\Ck}{{\mathcal C}^1_{k,0}(\Omega)}
\newcommand{\CK}{{\mathcal C}^1_{K,0}(\Omega)}
\newcommand{\patch}{{G}}
\newcommand{\RPlus}{{\mathbb R}_{+}}
\newcommand{\zeroNorm}[2][]{%
  \ifstrempty{#1}{\left\Vert #2 \right\Vert_0}{\left\Vert #2 \right\Vert_{0, #1}}
}
\newcommand{\oneNorm}[2][]{%
  \ifstrempty{#1}{\left\Vert #2 \right\Vert_1}{\left\Vert #2 \right\Vert_{1, #1}}
}
\newcommand{\oneSNorm}[2][]{%
  \ifstrempty{#1}{\left\vert #2 \right\vert_1}{\left\vert #2 \right\vert_{1, #1}}
}

\newcommand{\HNorm}[2][]{%
  \ifstrempty{#1}{\left\Vert #2 \right\Vert}{\left\Vert #2 \right\Vert_{ #1}}
}

\newcommand{\aNorm}[2][]{%
  \ifstrempty{#1}{\left\Vert #2 \right\Vert_a}{\left\Vert #2 \right\Vert_{ a, #1}}
}

\newcommand{\Proj}[2][]{%
\fint_{#1}{#2}^2 \; dx
}

\title{Numerical Homogenization of Fractal Interface Problems}

\author{Ralf Kornhuber, Joscha Podlesny, and Harry Yserentant}

\address{Ralf Kornhuber, Institut f\"ur Mathematik, 
 Freie Universit\"at Berlin, 
 14195 Berlin, Germany}
\email{kornhuber@math.fu-berlin.de}

\address{Joscha Podlesny, Institut f\"ur Mathematik, 
 Freie Universit\"at Berlin, 
 14195 Berlin, Germany}
\email{podlesjo@math.fu-berlin.de}

\address{Harry Yserentant, Institut f\"ur Mathematik, 
 Technische Universit\"at Berlin, 
 10623 Berlin, Germany}
\email{yserentant@math.tu-berlin.de}

\date{}

\maketitle
\begin{abstract}
We consider the numerical homogenization of a class of fractal elliptic interface problems 
inspired by related mechanical contact problems from the geosciences.
A particular feature  is that the solution space depends on the actual fractal geometry.
Our main results concern the construction of projection operators with suitable stability and approximation properties. 
The existence of such projections then allows for the application of existing concepts from localized orthogonal decomposition (LOD)
and successive subspace correction to construct  first multiscale discretizations and iterative algebraic solvers with scale-independent convergence behavior for this class of problems.
\end{abstract}

\thanks{
This research has been funded by Deutsche Forschungsgemeinschaft (DFG)
through grant CRC 1114 ''Scaling Cascades in Complex Systems'', Project Number 235221301,
Project B01  ''Fault networks and scaling properties of deformation accumulation''.
}

\section{Introduction}
%
%
Classical homogenization aims at deriving computationally feasible, effective mathematical descriptions 
of multiscale phenomena by capturing the fine scales in terms of  local cell problems.
Starting from elliptic problems with oscillating coefficients~\cite{allaire1992homogenization,allaire1996multiscale}
and its random counterparts~\cite{JKO1994,zhikov2006homogenization}
(stochastic) homogenization  has become a flourishing field of research 
and a well-established, powerful tool in  mathematical modelling with multiple scales.
An enormous variety of applications include
multiscale materials, featuring irregular or even fractal boundaries, transmission conditions across fractal interfaces,
 or long, thin fibers~\cite{grebenkov2006mathematical,lancia2002transmission,mosco2015layered}, 
biological materials like lung tissue~\cite{baffico2008homogenization,cazeaux2015homogenization},
or polycrystals giving rise to multiscale interface problems 
with  jump conditions across a fine scale network of 
 interfaces~\cite{cioranescu2013homogenization,donato2004homogenization,gruais2017heat}.
Corresponding stochastic variants have been studied in~\cite{heida2012stochastic,Hummel1999}.

%
%
Classical  homogenization typically relies on scale separation and periodicity of fine scale behavior.
To overcome these limitations in practical computations, numerical homogenization aims at
deriving multiscale discretizations and  iterative algebraic solution methods
that are robust with respect to the inherent lack of smoothness of multiscale problems.
A natural approach to multiscale discretization is to build all relevant fine scale features of a given problem 
directly into the approximating ansatz space.
Over more than two decades, this basic idea has led to composite finite elements~\cite{hackbusch1997composite,preusser20113d}, 
variational multiscale methods~\cite{hughes1998variational}, 
heterogeneous multiscale methods~\cite{abdulle2012heterogeneous,weinan2003heterognous},
and multiscale finite elements~\cite{efendiev2009multiscale,hou1997multiscale}.
A certain breakthrough in the mathematical understanding of multiscale discretization methods
for elliptic self-adjoint problems with oscillating coefficients came with the 
seminal paper on localized orthogonal decomposition (LOD) by M{\aa}lqvist and Peterseim~\cite{maalqvist2014localization}.
Starting from a projection  $\Pi: \cH \to \cS_h$ that 
maps the solution space $\cH$ onto some given finite element space $\cS_h\subset \cH\subset L^2$
with mesh size $h$ and satisfies the following stability and approximation property
\begin{equation} \label{eq:1}
\|\Pi v \|_{\cH} \leq c \|v\|_{\cH}, \qquad \|v - \Pi v\|_{L^2}\leq C h \|v\|_{\cH} \qquad \forall v \in \cH,
\end{equation}
they observed that the a-orthogonal complement $\cW$ of the kernel of  $\Pi$ 
(the orthogonal complement with respect to the underlying energy scalar product)
has the same dimension as $\cS_h$ and,
without any additional assumptions on periodicity or scale separation,
provides an approximation with optimal accuracy.
Moreover, optimal accuracy is preserved under localization of the a-orthogonalized nodal basis of $\cW$.
The actual computation of these localized basis functions amounts to an approximate solution of local problems,
utilizing a much larger finite element space $\cS$ that resolves all fine scale features of the given problem.

An alternative to multiscale discretization methods is to use such a large  finite element space $\cS$ directly for discretization
and derive iterative algebraic solution methods that converge independently
both of  the discretization parameters and of the regularity of the continuous solution.
The construction of such methods has been carried out successfully in the framework of
iterative subspace correction~\cite{kornhuber2016numerical,xu1992iterative,xu2008uniform,yserentant1993old}.
Each iteration step typically requires the solution of a set of fully decoupled local subproblems
that capture the different frequencies of the actual error.
In particular, subspace correction methods can be applied to localization in LOD~\cite{kornhuber2018analysis}
and are often merged with multiscale discretization techniques e.g., to enhance convergence of multigrid methods
by enrichment of coarse grid spaces~\cite{hackbusch1997composite,kornhuber1994multilevel}.
%
%
While the LOD approach to the construction of multiscale discretizations makes explicit use of a projection 
$\Pi: \cH \to \cS_h$ with stability and approximation property \eqref{eq:1}, 
such kind of projections play a crucial role in the 
convergence analysis of subspace correction methods (see, e.g., \cite{kornhuber1994multilevel} and the references cited therein).
The explicit construction and analysis of such operators for standard Sobolev and finite element spaces
has  therefore quite a history with further applications in finite element convergence theory and a posteriori error analysis~\cite{brenner1994two,carstensen2006clement,clement1975approximation,ern2017finite,oswald1993bpx,verfurth1999error}.

%
%
In this paper, we consider numerical homogenization of a class of elliptic fractal interface problems
without periodicity and scale separation that is motivated by geology.
Experimental studies suggest that  grains in fractured rock are 
distributed in a fractal manner~\cite{nagahama1994scaling,turcotte1994crustal},
an observation which  is also reflected by  geophysical modelling of fragmentation 
due to tectonic deformation~\cite{sammis1986self}.
All spatial scales ranging from grains and rocks even up to tectonic plates 
are interacting in  geophysical fault networks 
that play an essential role in the dynamics of earthquake sources (see, e.g., ~\cite{rundle2003statistical} and the literature cited therein).
Mathematical modelling of stress accumulation and release in fault networks
gives rise to continuum mechanical problems with frictional contact along the interfaces 
(see, e.g. ~\cite{pipping2016efficient} and the literature cited therein).
Linearization of contact conditions leads to  elliptic interface problems,
where frictional motion along interfaces is replaced by weighted jumps of diplacement.

%
%
Scalar versions of such interface problems with fractal interface geometry 
have  recently been suggested and analyzed by Heida et al.~\cite{heida2017fractal}.
More precisely,  the fractal interface $\Gamma$ is the limit of level-$k$ interface networks $\Gamma^{(k)}$ for $k \to \infty$
and a level-$k$ interface  network $\Gamma^{(k)}= \bigcup_{j=1}^k \Gamma_j$
consists of  single faults $\Gamma_j$.
Here, the single  faults $\Gamma_j$ are ordered from "strong" to "weak"
in the sense that discontinuities of displacements along $\Gamma_j$ are expected 
to decrease for increasing $k$, because  "more fractured" media are expected to show higher 
resistance~\cite{gao2014strength,oncken2012strain}.
For each fixed $k$, the level-$k$ networks $\Gamma^{(k)}$ divide the computational domain $\Omega$ into a finite number of cells 
representing, e.g., geological grains, rocks, and plates.
For each $k \in \N$, we define a Hilbert space $\cH_k$ 
by completion of piecewise smooth functions  in $\Omega\setminus \Gamma^{(k)}$ 
with respect to a scalar product involving the broken $H^1$-seminorm 
and weighted $L^2$-norms of jumps across $\Gamma_j$, $j=1,\dots k$.
The solution space $\cH$ for interface problems  on the limiting fractal geometry $\Gamma$
is finally defined by completion of $\bigcup_{k=1}^{\infty} \cH_k$.
We consider self-adjoint elliptic  variational problems  in $\cH$.
Observe that the multiscale character of such problems goes beyond 
the usual lack of smoothness, because the solution space $\cH$ itself
depends on the actual fractal geometry which is not accessible by a fixed classical finite element space.
This suggests multiscale modifications of  classical finite elements
as ansatz spaces allowing for a priori discretization error estimates.

%
%
The main results of this paper concern the construction of projection operators $\Pi_k: \cH \to \cS_k$
with the stability and approximation property \eqref{eq:1} 
for spaces $\cS_k$ of piecewise linear finite elements  with respect to a triangulation $\cT^{(k)}$
resolving the level-$k$ interface network $\Gamma^{(k)}$, $k\in \N$.
These results allow for direct access to existing approaches to numerical homogenization, e.g., by LOD or subspace correction.
Our construction consists of two steps. We first consider projections $\Pi_{\cH_k}: \cH \to \cH_k$ and then
$\Pi_{\cS_k}: \cH_k \to \cS_k$, both with the desired properties \eqref{eq:1}. 
As projections $\Pi_{\cS_k}$ can be essentially taken from the 
literature~\cite{brenner1994two,carstensen2006clement,clement1975approximation,ern2017finite,oswald1993bpx,verfurth1999error}, 
we mainly concentrate on  the construction and analysis of $\Pi_{\cH_k}$ by extending common concepts based on local 
Poincar\'e inequalities~\cite{carstensen2006clement,verfurth1999error}.
Here, the presence of jump terms  creates various technical difficulties.
In particular, counterexamples show that it is not possible 
to bound jumps of local averages by jumps of the original functions.
Therefore, stability of $\Pi_{\cH_k}$ requires strong assumptions on the locality of $\Gamma$
that rule out, e.g. the Cantor network~\cite{heida2017fractal,turcotte1994crustal}.
The existence of suitable projections $\Pi_k$ then opens the door to a variety of existing numerical homogenization methods.
We only  consider two simple examples to fix ideas (see~\cite{podlesny20} for more advanced applications).
The application of LOD with cell-based localization by subspace correction
in the spirit of~\cite{kornhuber2018analysis,maalqvist2014localization}
provides a multiscale discretization with optimal error estimates.
Using concepts from \cite{kornhuber2016numerical},
we also present continuous and discrete versions of a two-level multigrid method with cell-based block Gauss-Seidel smoother 
and convergence rates that are independent of mesh and scale parameters.
In the concluding numerical experiments with a highly localized fractal geometry, 
we found the theoretically predicted behavior of this method.
Moreover, application to a geologically inspired crystalline structure 
illustrates the potential of our approach in future applications.

%
%
The paper is organized as follows.
The first section contains the continuous problem  formulation.
After a detailed description of the geometry of the multiscale network $\Gamma^{(k)}$, $k\in \N$,
together with some assumptions capturing its shape regularity and fractal character,
we introduce a fractal interface problem and state existence and uniqueness.
In the next section,  we discuss convergence of its $k$-scale approximation
associated with the subspaces $\cH_k \subset \cH$.
Then we introduce suitable piecewise linear finite element spaces 
$\cS_k \subset \cH_k$ for the approximation of these $k$-scale problems and state some error estimates.
The ensuing Section~\ref{sec:PROJECTIONS} is the core of the paper. 
It contains the construction and analysis of projections 
$\Pi_k = \Pi_{\cS_k}\circ \Pi_{\cH_k}$ via local Poincar\'e inequalities, a trace lemma, and quasi-interpolation.
The next two sections are devoted to first applications of these projections $\Pi_k$ 
to construct and analyze a LOD-type multiscale discretization with optimal error estimates
and a mesh- and scale-independent subspace correction method.
We finally report on some numerical experiments that illustrate our theoretical findings
and open a perspective to future practical applications.


\section{Fractal interface problems}
\subsection{Interface networks} \label{subsec:INTERFACE}
Let $\Omega\subset \R^d$, $d=1,2,3$, be a bounded domain with Lipschitz boundary $\partial \Omega$ 
that contains a countable set of mutually disjoint interfaces $\Gamma_j$, $j\in \N$. 
We assume that each interface $\Gamma_j$ is piecewise affine
with finite $(d-1)$-dimensional Hausdorff measure.
We  consider the $k$-scale interface networks $\Gamma^{(k)}$ and their fractal limit $\Gamma$, given by
\[
\Gamma^{(k)} = \bigcup_{j=1}^k \Gamma_j,\quad k\in \N,\qquad\qquad \Gamma = \bigcup_{j=1}^\infty \Gamma_j,
\]
respectively. 
Since all interfaces $\Gamma_j$, $j\in \N$, have Lebesgue measure zero in $\R^d$,
their countable union $\Gamma$ has Lebesgue measure zero as well. 
However, $\Gamma$ might have fractal (Hausdorff-) dimension $d-s$ for some $s\in(0,1)$
and infinite $(d-1)$-dimensional measure.

For each fixed $k\in \N$, the set $\Omega\setminus \Gamma^{(k)}$
consists of a finitely many mutually disjoint, open, and simply connected cells 
$G \in \Omega^{(k)}$, i.e.
\[
\Omega\setminus \Gamma^{(k)} =\bigcup_{G \in \Omega^{(k)}} G.
\]
We assume that  $\partial G = \partial \overline{G}$ (no slits) and that either 
$G \cap \partial \Omega$ has positive $(d-1)$-dimensional Hausdorff measure
or $G\cap \partial \Omega = \emptyset$.
We also assume that the cells $G\in \Omega^{(k)}$ are star-shaped in the sense that for each $G\in \Omega^{(k)}$
there is a  center $p_G \in G$ of $G$  and a continuous function $\rho_{G}$ defined on the unit sphere $S^{d-1}$ in $\R^d$
with values in $\RPlus=\{x \in \R\;|\; x \geq 0\}$ such that
\begin{equation} \label{eq:STARSHAPED}
	G = \left\lbrace p_G + r s \, | \; s \in S^{d-1}, \, 0 \leq r < \rho_G(s) \right\rbrace .
\end{equation}
Denoting
\begin{equation} \label{def:gridParametersVerfuerth}
	R_G = 2 \max\limits_{s \in S^{d-1}} \rho_{G} (s), \quad r_{G} =2 \min\limits_{s \in S^{d-1}} \rho_{G} (s)
\end{equation}
we assume that the cell partitions $ \Omega^{(k)}$, $k\in \N$, are shape regular in the sense that 
\begin{equation} \label{eq:SHG}
 \frac{R_G}{r_G}\leq \gamma \qquad \forall G \in \Omega^{(k)}\quad \forall k\in \N
\end{equation}
holds with some constant $\gamma\geq 1$. 

Introducing the subset of invariant cells 
\[
\Omega_{\infty}^{(k)}=\left\{ G\in\Omega^{(k)}\,\vert \;\,G\in\cG^{(j)}\; \forall j > k \right\} 
\]
we define the maximal size 
\begin{equation} \label{eq:NIS}
 d_{k} =\max
 \left\{ R_G\,|\;G\in\Omega^{(k)}\backslash \Omega_{\infty}^{(k)}\right\}
\end{equation}
of cells $G\in\Omega^{(k)}$ to be divided on higher levels. 
Hence, $R_G\leq d_k$ for all $G \in \Omega^{(k)}\backslash \Omega_{\infty}^{(k)}$.
Observe that $d_k$ is monotonically decreasing in $k\in \N$. We assume
\begin{equation} \label{eq:DKZERO}
d_{k}\to\;0\quad
 \text{for}\quad k\to\infty .
\end{equation}
Let $|M|\in \N \cup \{+\infty\}$ stand for the number of elements of some set $M$. Denoting 
\[
(x,y)=\{x+s(y-x)\;|\; s\in (0,1)\},
\]
we also assume that for each fixed $k \in \N$  and all  $j \in \N$ with $j  >  k$,
there is a constant $C_{k,j}\geq 0$ such that
\begin{equation} \label{eq:CDEF}
  |(x,y)\cap G \cap \Gamma_j |\leq C_{k,j} \qquad \forall G \in \Omega^{(k)}
\end{equation}
holds for almost all $x,\;y\in \Omega$. We set $C_1=1$,  $C_j=C_{1,j}$, $j=2,\dots$, and 
\begin{equation} \label{eq:RK}
r_k = \sup_{j > k}\frac{C_{k,j}}{C_{1,j}}, \qquad k\in \N.
\end{equation}
We finally assume that the interface networks $\Gamma^{(k)}$ are self-similar in the sense that
\begin{equation} \label{eq:RKSUP}
r_k C_k  \leq C_0 , \qquad \forall k \in \N
\end{equation}
holds with some constant $C_0$.

As an example, we consider a highly localized interface network in $d=2$ space dimensions. 
Let  $\Omega=(0,1)^{2}$ be the unit square and $\{e_1, e_2\}$ denote the
canonical basis in $\R^{2}$. Then the interface networks $\Gamma^{(k)}$, $k\in\N$, are inductively constructed as follows.
Let 
\[
\Gamma^{(1)}=\Gamma_1= \{\tfrac{1}{4}e_1 + (0,e_2)\}\cup \{\tfrac{1}{4}e_2 + (0,e_1)\}
 \cup\{\tfrac{1}{2}e_1 +(0, \tfrac{1}{4}e_2)\} \cup \{\tfrac{1}{2}e_2 +(0, \tfrac{1}{4}e_1)\}.
\]
For given $\Gamma^{(k)}$, $k \geq 1$, we define
\[
\tilde{\Gamma}_{k+1} = \Gamma^{(k)}\cup\{e_1 + \Gamma^{(k)}\} \cup \{e_2 + \Gamma^{(k)}\}
\]
and set $\Gamma_{k+1} = \frac{1}{4}\tilde{\Gamma}_{k+1} \setminus \Gamma^{(k)}$.
See  Figure~\ref{fig:MODELNET} for an illustration.
The resulting interface network is self-similar by construction
which can be directly extended to $d=3$ space dimensions. 
We have $d_k = \sqrt{2}\;4^{-k}$, 
$C_k=  2^k + 2^{k-1} - 2$
and $C_{k,l} = C_{l-k+1}$, $k=2, \dots$. 
Thus $r_k = 2^{1-k}$ and  \eqref{eq:RKSUP} holds with $C_0= 3$.

\begin{figure}
\includegraphics[width=3cm]{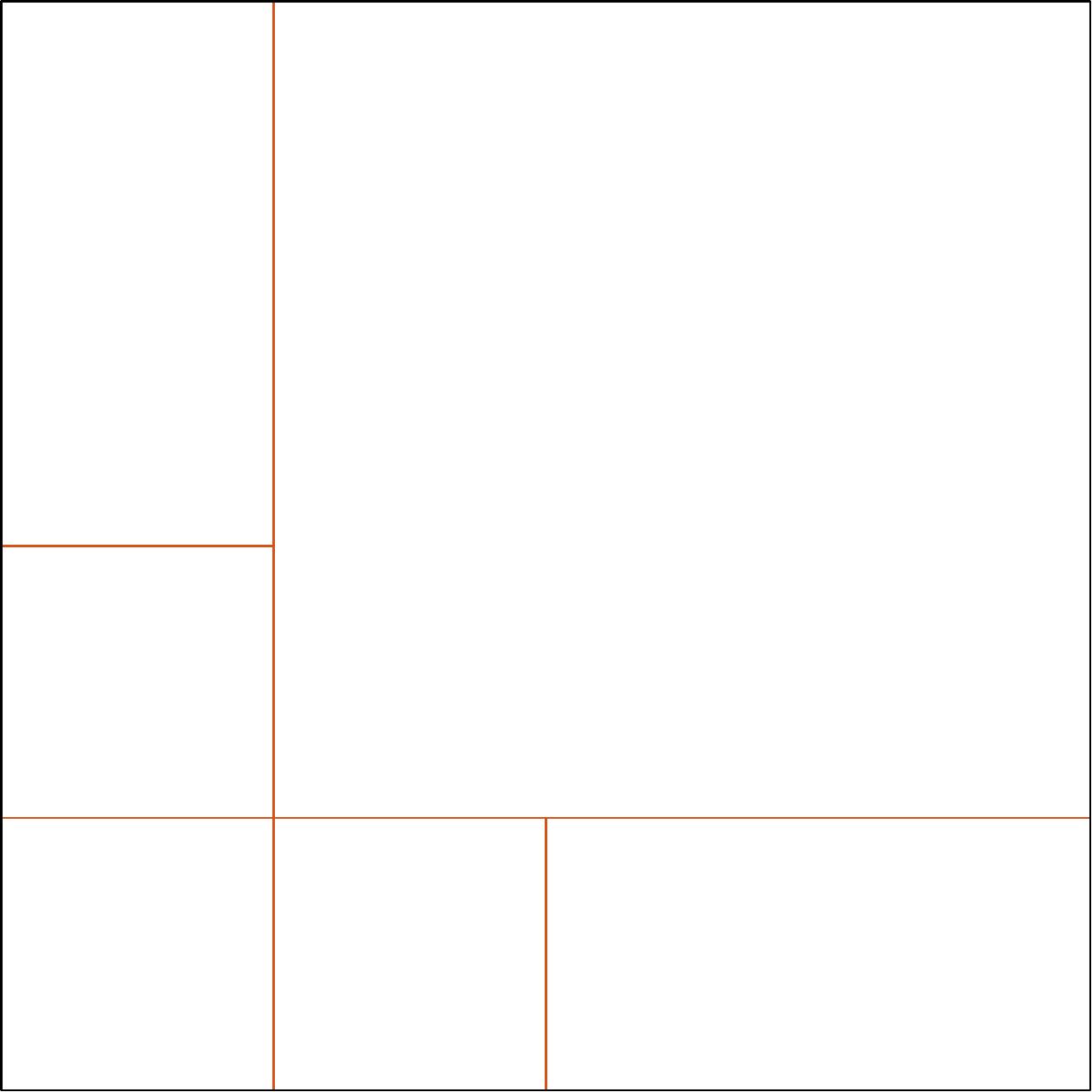}$\quad$
\includegraphics[width=3cm]{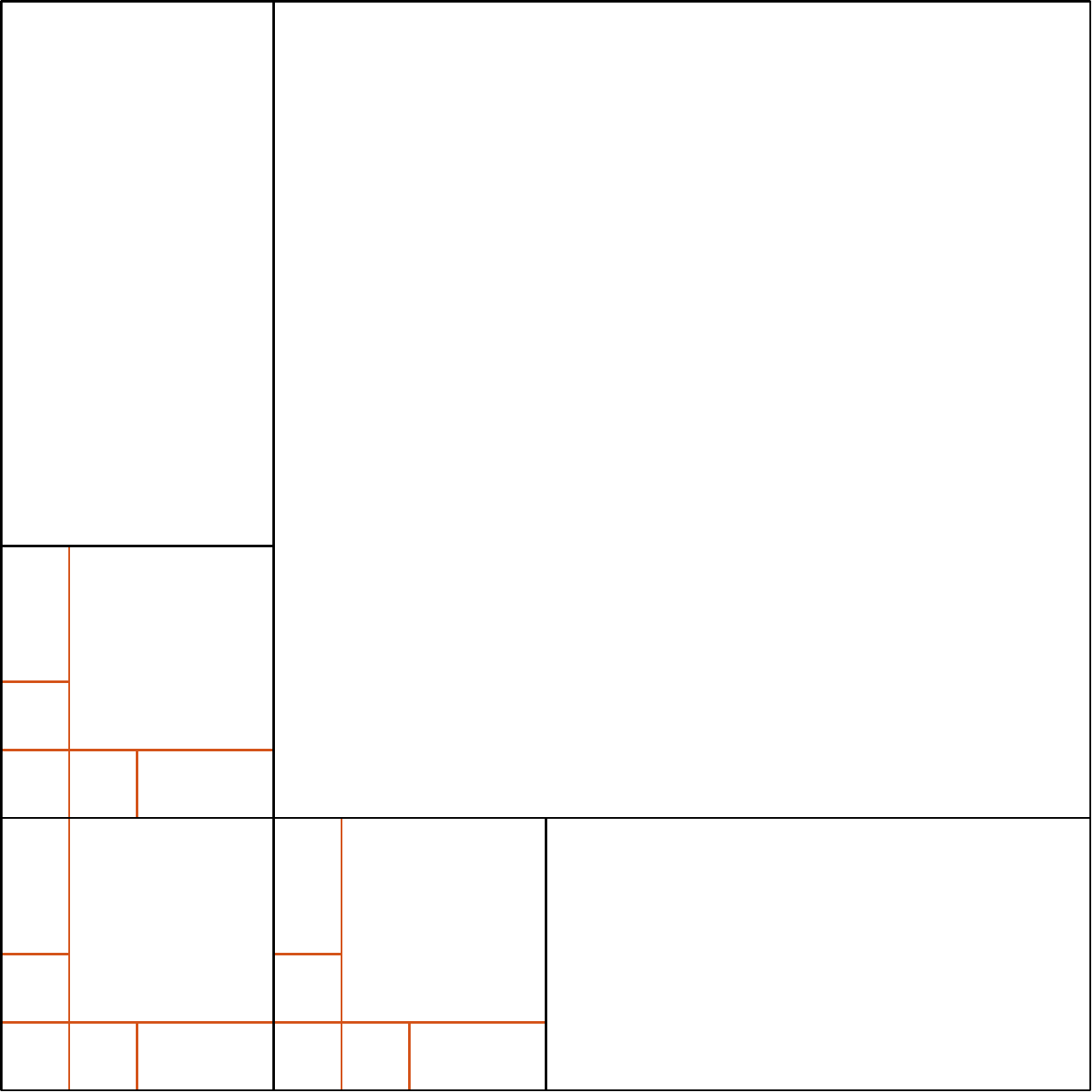}$\quad$
\includegraphics[width=3cm]{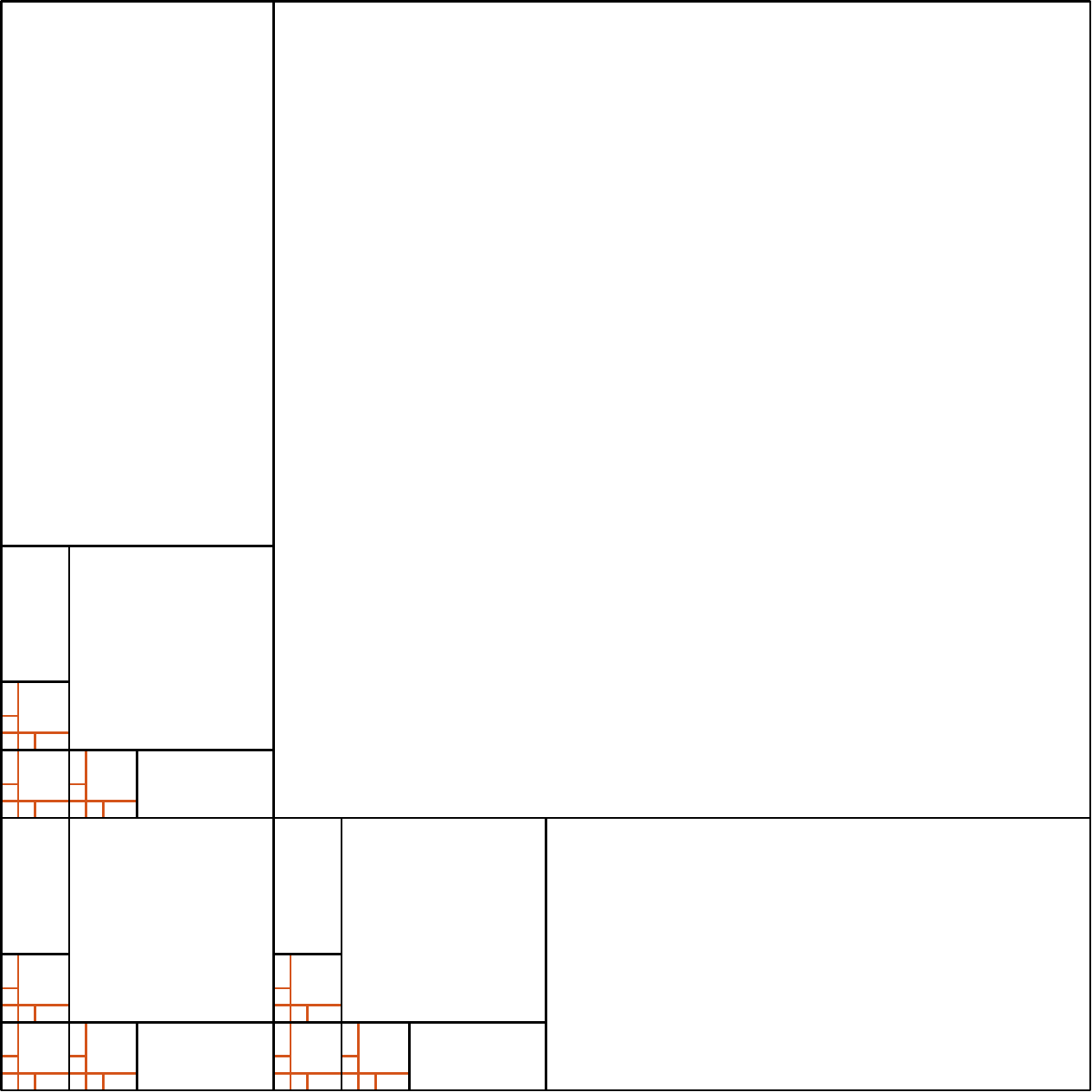}$\quad$
\includegraphics[width=3cm]{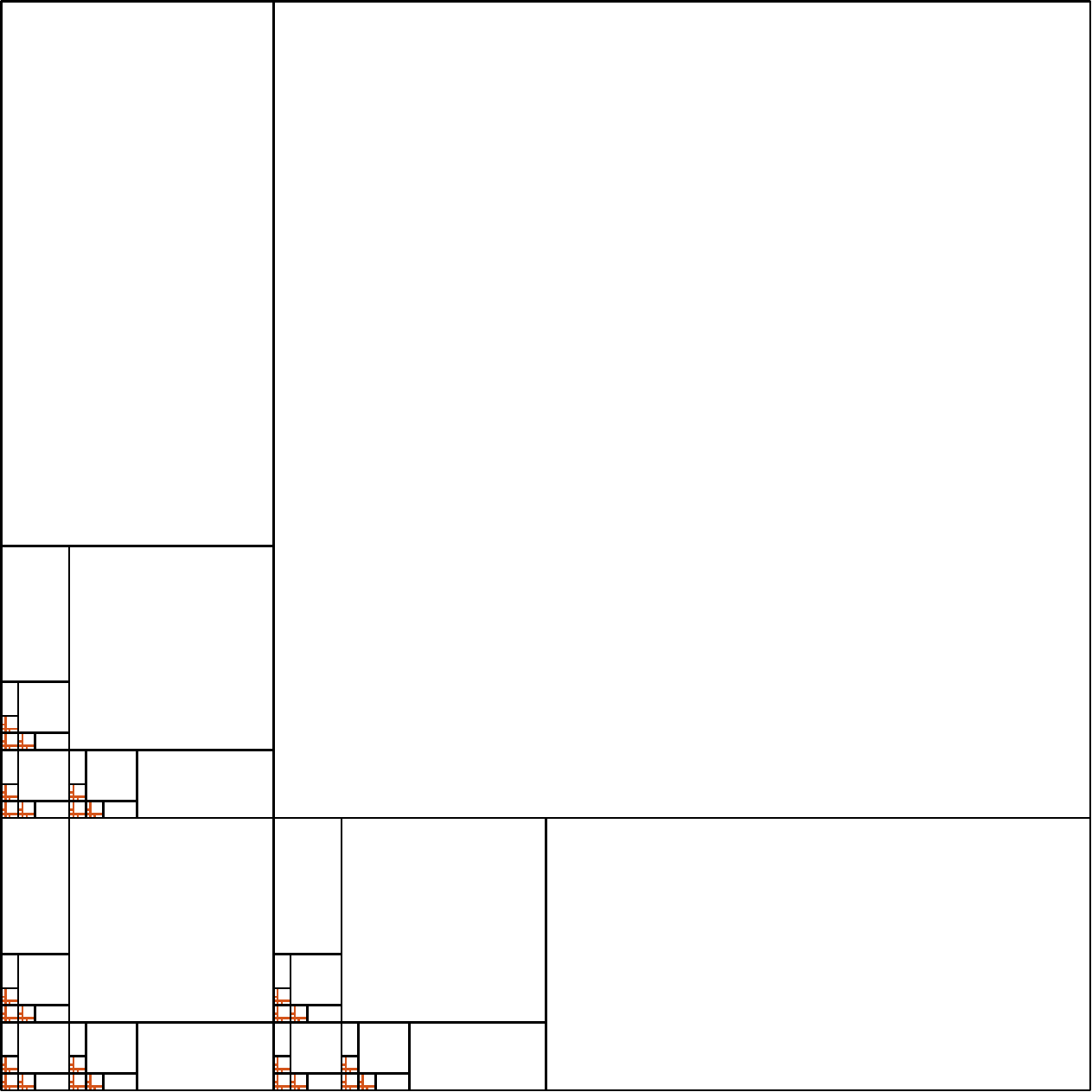}
\caption{\label{fig:MODELNET} 
Highly localized  interface network in $d=2$ space dimensions: 
 $\Gamma^{(1)}=\Gamma_1$ (red) and $\Gamma^{(k)}$ with $\Gamma_k$ (red) for $k=2,3,4$. }
\end{figure}

\subsection{Fractal function spaces}

For each fixed $k\in \N$, we introduce the space of piecewise smooth functions
\[
\Ck =\left\{
\left. v:  \overline{\Omega}\backslash \Gamma^{(k)} \to \R
\;\;  \right \vert \;\;v |_G\in C^1(\overline{G}) \; \forall G \in \Omega^{(k)}
\text{ and } v |_{\partial \Omega}\equiv0\right\}
\]
on $\Omega\backslash \Gamma^{(k)}$.
Let $j=1,\dots,k$.  
As $\Gamma_j$  is piecewise affine,
there is a normal $\nu_{\xi}$ to $\Gamma_j$ at almost all $\xi\in \Gamma_j$
and we fix the orientation of $\nu_{\xi}$ such that $\nu_{\xi}\cdot e_m>0$
with $m = \min\{i=1,\dots,d\;|\; \nu_{\xi} \cdot e_i \neq 0 \}$, and
$\{e_1,\dots,e_d\}$ denotes the canonical basis of $\R^d$.
For $\xi \in \Gamma^{(k)}$ such that $\nu_{\xi}$ exists and
for $x \neq y \in \R^d$ such that $(x-y)\cdot \nu_{\xi}\neq 0$,
the jump of $v\in C_{k,0}^{1}(\Omega)$ across $\Gamma_j$ at $\xi$
in the direction $y-x$ is defined by
\[
\jump v_{x,y}(\xi) =\lim_{s\downarrow 0}\left(v\left(\xi+s(y-x)\right)-v\left(\xi-s(y-x)\right)\right) .
\]
 Up to the sign, $\jump v_{x,y}(\xi)$
is  equal to the normal jump of $v\in C_{k,0}^{1}(\Omega)$
\[
\jump v(\xi) := \jump v_{\xi-\nu_\xi,\xi+\nu_\xi}(\xi).
\]
For some fixed material constant $\fc>0$, that, e.g., determines
the growth of resistance to jumps with increasing fracturing,
and the geometrical constant $C_j = C_{1,j}$ taken from \eqref{eq:CDEF}, we introduce the scalar product
\begin{equation} \label{eq:SCALPRO}
\left\langle v,\,w\right\rangle_k =
\int_{\Omega\backslash \Gamma^{(k)}}\nabla v\cdot\nabla w\; dx +
\sum_{j=1}^{k}\left(1+\fc\right)^{j}C_{j}\int_{\Gamma_{j}}\jump v\jump w\; d\Gamma_j ,
\quad  v, w \in C_{k,0}^{1}(\Omega),
\end{equation}
with the associated norm
$\left\Vert v\right\Vert_k = \left\langle v,\,v\right\rangle _k^{1/2}$.
Observe that $(1+ \fc)^j$ generates an exponential scaling of the
resistance to jumps across $\Gamma_j$.

Standard completion of $\Ck$ leads to a hierarchy of $k$-scale Hilbert spaces 
\[
\cH_1 \subset \cH_2 \subset \cdots \subset \cH_k, \qquad k \in \N,
\]
with the scalar products $\langle \cdot, \cdot \rangle_k$  and dense subspaces $ \Ck \subset  \cH_k$, $k \in \N$.
A limiting fractal Hilbert space $\cH$ with scalar product 
\begin{equation} \label{eq:LIMPRO}
\left\langle v,\,w\right\rangle =
\int_{\Omega\backslash \Gamma}\nabla v\cdot\nabla w\; dx +
\sum_{j=1}^{\infty}\left(1+\fc\right)^{j}C_{j}\int_{\Gamma_{j}}\jump v\jump w\; d\Gamma_j ,  \quad  v, w \in \cH,
\end{equation}
and associated norm $\HNorm{\cdot } = \langle \cdot, \cdot \rangle^{1/2}$ 
is obtained by completion of $\bigcup_{k \in \N}\cH_k$. 
We recall the main properties of $\cH$ for later use and 
refer to~\cite{heida2017fractal} for details.

The smooth subspaces $(\Ck)_{k \in \N}$, and thus the finite-scale spaces $(\cH_k)_{k\in \cH}$, 
are dense in $\cH$ in the sense that for any $v,w \in \cH$ 
there are sequences $(v_k)_{k \in \N},(w_k)_{k \in \N} \subset (\Ck)_{k \in \N}$, 
i.e., with  $v_k, w_k \in \Ck$ for all $k\in \N$, such that 
\begin{equation} \label{eq:DENSE}
\HNorm{v - v_k } \to 0,\qquad 
\langle v_k,w_k\rangle_k \to \langle v, w \rangle \quad \text{for }k\to \infty .
\end{equation}
Observe that 
\[
 \Omega\backslash \Gamma = 
 \Omega \cap (\bigcup_{j=1}^{\infty} \Gamma_j)^{\complement}\subset \Omega\backslash \Gamma^{(k)}
\]
is Lebesgue measurable so that the space $L^2(\Omega \backslash \Gamma)$ 
implicitly appearing in \eqref{eq:LIMPRO} is well-defined.
For the definition of generalized jumps $\jump v$, $v \in \cH$, 
also appearing in \eqref{eq:LIMPRO}, 
we introduce the sequence space $(L^2(\Gamma_j))_{j \in \N}$
equipped with the weighted norm
\[
\Vert z \Vert_{\Gamma}=
\left(\sum_{j=1}^{\infty} (1+ \fc)^j C_j  \zeroNorm[\Gamma_j]{z_j}^2\right)^{\frac{1}{2}}\; ,
\quad z = (z_j)_{j \in \N}\in (L^2(\Gamma_j))_{j \in \N}  ,
\]
with  $\zeroNorm[\Gamma_j]{\;\cdot\;}$ denoting the usual norm in $L^2(\Gamma_j)$.
Then, for each $v \in \cH$ and each sequence $(v_k)_{k \in \N}$ with $v_k \in \cH_k$,  the limits
 \[
\nabla v = \lim_{k\to \infty} \nabla v_k \quad \text{in }L^2(\Omega \backslash \Gamma)
\quad\text{and}\quad
\jump v = \lim_{k \to \infty}\jump {v_k} \quad \text{in }(L^2(\Gamma_k))_{k \in \N}
 \]
exist and are called weak gradient $\nabla v$ and generalized jump $ \jump{v}$ of $v$, respectively.
We have the Green's formula
\begin{equation}\label{eq:WEAK}
\int_{\Omega} v \nabla \cdot \varphi \; dx
=-\int_{\Omega\backslash\Gamma} \nabla v \cdot \varphi\; dx
+\sum_{j=1}^\infty \int_{\Gamma_j}\jump v \varphi\cdot\nu_j\; d\Gamma_j
\qquad \forall \varphi \in C_0^{\infty}(\R^d)^d 
\end{equation}
and the Poincar\'{e}-type inequality
\begin{equation}\label{eq:Poincare-H-1-2}                                      
\zeroNorm[\Omega]{v}\leq  C_P                                                                               
\left(
\oneSNorm[\Omega\backslash\Gamma]{v}^{2} +
\sum_{j=1}^{\infty}\left(1+\fc\right)^j C_{j} \zeroNorm[\Gamma_j]{\jump{v}}^2
\right)^{1/2}
\end{equation}
where  $\oneSNorm[ \Omega\setminus \Gamma ]{ v } = \zeroNorm[\Omega\setminus \Gamma]{\; \vert \nabla v \vert \;}$  and the
constant $C$ is  bounded in terms of $(1 + \frac{1}{\fc})\text{diam}(\Omega)$.
Moreover, the continuous embedding $\cH \subset H^s(\Omega)$, $s \in [0,\frac{1}{2} )$,
into Sobolev-Slobodeckij spaces $H^s(\Omega)$ (see, e.g.~\cite{slobodeckii1958generalized,triebel1982function}) allows to identify
$\cH$ with a subspace of $\bigcap_{s \in [0,\frac{1}{2} )} H^s(\Omega)$.

\subsection{Fractal interface problem}
We consider the fractal interface problem
\begin{equation} \label{eq:FRACP}
 u \in \cH: \qquad a(u,v)= (f,v) \qquad \forall v \in \cH
\end{equation}
with $f \in L^2(\Omega)$, the usual scalar product $(\cdot, \cdot)$ in $L^2(\Omega)$, and the bilinear form
\begin{equation}
a(v,w)= \int_{\Omega\backslash \Gamma}A\nabla v\cdot\nabla w\; dx +
\sum_{j=1}^{\infty}\left(1+\fc\right)^{j}C_{j}\int_{\Gamma_{j}}B\jump v\jump w\; d\Gamma_j, 
\quad v, w \in \cH,
\end{equation}
involving the functions $A: \Omega\setminus \Gamma  \mapsto  \R^{d\times d}$ 
and $B: \Gamma= \bigcup_{j=1}^{\infty}\Gamma_j \mapsto \R$.
We assume that $A(x) \in  \R^{d\times d}$ is symmetric for all $x \in \Omega\setminus \Gamma$ 
and has the properties
\begin{equation} \label{eq:PDA}
\alpha_0 |\xi|^2 \leq A(x) \xi \cdot \xi,\qquad |A(x)\xi \cdot \eta| \leq \alpha_1 |\xi||\eta|
,\qquad \forall \xi, \eta \in \R^d \qquad \forall x \in \Omega\setminus \Gamma
\end{equation}
with positive constants $\alpha_0, \alpha_1 \in \R$.
We also assume that   $B$ satisfies
\begin{equation}\label{eq:PDB}
0 < \beta_0 \leq B(x) \leq \beta_1\qquad \forall x \in \Gamma
\end{equation}
with constants $\beta_0, \beta_1\in \R$.
The assumptions \eqref{eq:PDA} and \eqref{eq:PDB} imply that $a(\cdot,\cdot)$
is symmetric and elliptic in the sense that
\begin{equation} \label{eq:ELLIP}
 \fa \HNorm{v}^2 \leq a(v,v),\qquad |a(v,w)|\leq \fA \HNorm{v}\HNorm{w}\qquad \quad \forall v,w \in \cH
\end{equation}
holds with $\fa = \min\{\alpha_0,\beta_0\}$ and $\fA = \min\{\alpha_1,\beta_1\}$.
Hence, $a(\cdot,\cdot)$ is a scalar product in $\cH$ and the associated energy norm
$\aNorm{ \cdot}= a(\cdot, \cdot)^{1/2}$ is equivalent to $\HNorm{\cdot}$.

Note that we have $(f, \cdot) \in \cH^{-1}$ due to the continuous embedding
\eqref{eq:Poincare-H-1-2} of $\cH$ into $L^2(\Omega)$.
Hence, well-posedness follows directly from the Lax-Milgram lemma.
\begin{proposition} \label{prop:FRACEXIST}
The fractal interface problem \eqref{eq:FRACP} admits a unique solution $u \in \cH$
satisfying the stability estimate
\begin{equation}
\HNorm{u} \leq  {\textstyle \frac{1}{\fa}} \, C_P \zeroNorm[\Omega]{f}. 
\end{equation}
\end{proposition}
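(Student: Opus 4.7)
The plan is to apply the Lax--Milgram lemma in the Hilbert space $\cH$, since essentially all of the heavy lifting (the construction of $\cH$ via completion, the ellipticity and continuity of $a(\cdot,\cdot)$ stated in \eqref{eq:ELLIP}, and the continuous embedding $\cH \hookrightarrow L^2(\Omega)$ provided by \eqref{eq:Poincare-H-1-2}) has already been carried out. The proof therefore reduces to three short bookkeeping steps.

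First I would verify that the right-hand side $v\mapsto (f,v)$ is a bounded linear functional on $\cH$. Linearity is clear, and Cauchy--Schwarz in $L^2(\Omega)$ followed by the Poincar\'e-type inequality \eqref{eq:Poincare-H-1-2} yields
\begin{equation*}
  |(f,v)| \;\leq\; \zeroNorm[\Omega]{f}\,\zeroNorm[\Omega]{v} \;\leq\; C_P \,\zeroNorm[\Omega]{f}\,\HNorm{v}
  \qquad \forall v \in \cH,
\end{equation*}
so $(f,\cdot)\in \cH^{-1}$ with norm at most $C_P\zeroNorm[\Omega]{f}$.

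Next, combining this with the ellipticity and continuity of $a(\cdot,\cdot)$ from \eqref{eq:ELLIP}, the Lax--Milgram lemma applied in the Hilbert space $(\cH,\langle\cdot,\cdot\rangle)$ yields a unique $u\in\cH$ satisfying \eqref{eq:FRACP}. The stability estimate then follows in one line by testing with $v=u$: using \eqref{eq:ELLIP} and the bound on $(f,\cdot)$ just obtained,
\begin{equation*}
  \fa\,\HNorm{u}^{2} \;\leq\; a(u,u) \;=\; (f,u) \;\leq\; C_P\,\zeroNorm[\Omega]{f}\,\HNorm{u},
\end{equation*}
and dividing by $\HNorm{u}$ (treating the trivial case $u=0$ separately) gives the claimed bound.

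There is no genuine obstacle here; the only thing to be slightly careful about is that the symmetry of $a(\cdot,\cdot)$ is not needed for Lax--Milgram, but ellipticity and continuity on all of $\cH$ are, and both were already established in \eqref{eq:ELLIP} by extending from the dense subspace $\bigcup_k \Ck$ via the density property \eqref{eq:DENSE}. All nontrivial work (Poincar\'e inequality on the fractal space, well-posedness of the jump terms in the limit, continuous embedding into $L^2$) has been done beforehand, so the proposition is an immediate corollary.
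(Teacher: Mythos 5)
Your proposal is correct and follows essentially the same route as the paper: the authors likewise note that $(f,\cdot)\in\cH^{-1}$ via the Poincar\'e-type embedding \eqref{eq:Poincare-H-1-2} and then invoke the Lax--Milgram lemma with the ellipticity and continuity in \eqref{eq:ELLIP}. Your explicit derivation of the stability constant by testing with $v=u$ just spells out what the paper leaves implicit.
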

We now focus on  the numerical approximation of the solution $u$ of the fractal interface problem \eqref{eq:FRACP}.

\section{Finite-scale discretization}
\subsection{Finite scales}
As $\cH$ is characterized by limiting properties of the $k$-scale spaces $\cH_k$, $k\in \N$,
it is natural to consider the interface problems
\begin{equation} \label{eq:MSACP}
 u_{\cH_k} \in \cH_k: \qquad a(u_{\cH_k},v)= (f,v) \qquad \forall v \in \cH_k
\end{equation}
on finite scales $k \in \N$. Note that 
\begin{equation} \label{eq:FSA}
 a(v,w)=a_k(v,w)=\int_{\Omega\backslash \Gamma}A\nabla v\cdot\nabla w\; dx +
\sum_{j=1}^{k}\left(1+\fc\right)^{j}C_{j}\int_{\Gamma_{j}}B \jump v\jump w\; d\Gamma_j, 
\quad v, w \in \cH_k.
\end{equation}
While the Lax-Milgram lemma implies existence and uniqueness,
a straightforward error estimate follows from C\'ea's lemma.
\begin{proposition} \label{prop:MULTIEXIST}
 For each $k\in \N$ the $k$-scale interface problem \eqref{eq:MSACP} 
 admits a unique solution $u_{\cH_k} \in \cH_k$
satisfying the error estimate
\begin{equation} \label{eq:CEAERR}
\HNorm{u- u_{\cH_k}} \leq  {\textstyle \frac{\fA}{\fa}} \inf_{v\in \cH_k}\HNorm{u - v} .                         
\end{equation}
\end{proposition}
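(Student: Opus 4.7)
The plan is to follow the textbook recipe for conforming Galerkin approximations: deduce well-posedness from the Lax--Milgram lemma applied to $\cH_k$, and then derive the a priori estimate \eqref{eq:CEAERR} from C\'ea's lemma. I expect no real obstacle; the only point that deserves care is to verify that $\cH_k$ sits inside $\cH$ as a closed subspace with compatible scalar product, so that the ellipticity and continuity of $a(\cdot,\cdot)$ transfer unchanged.

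For this compatibility I would first observe that for $v,w\in\cH_k$ the generalized jumps $\jump{v}$, $\jump{w}$ across $\Gamma_j$ vanish for every $j>k$: indeed, functions in $\Ck$ are smooth on $\overline{\Omega}\setminus\Gamma^{(k)}$ and hence have zero jump across such $\Gamma_j$, and this property persists under completion because the generalized jumps are obtained as limits in the weighted sequence space appearing in \eqref{eq:DENSE}. Consequently, the identity $a(v,w)=a_k(v,w)$ asserted in \eqref{eq:FSA} holds, and likewise $\langle v,w\rangle=\langle v,w\rangle_k$, so that $\cH_k$ embeds isometrically as a closed subspace of $\cH$. The bounds \eqref{eq:ELLIP} then restrict to $\cH_k$ with the same constants $\fa$ and $\fA$, and $(f,\cdot)$ remains a continuous linear functional on $\cH_k$ by the Poincar\'e inequality \eqref{eq:Poincare-H-1-2}. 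The Lax--Milgram lemma then delivers a unique $u_{\cH_k}\in\cH_k$ solving \eqref{eq:MSACP}, in exact analogy with Proposition~\ref{prop:FRACEXIST}.

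For the error estimate I would exploit Galerkin orthogonality. Since $\cH_k\subset\cH$, every $v\in\cH_k$ is an admissible test function in \eqref{eq:FRACP}; subtracting \eqref{eq:MSACP} from \eqref{eq:FRACP} yields $a(u-u_{\cH_k},v)=0$ for all $v\in\cH_k$. The standard chain
\[
\fa\,\HNorm{u-u_{\cH_k}}^2\leq a(u-u_{\cH_k},u-u_{\cH_k}) = a(u-u_{\cH_k},u-v) \leq \fA\,\HNorm{u-u_{\cH_k}}\,\HNorm{u-v},
\]
valid for every $v\in\cH_k$, combined with division by $\HNorm{u-u_{\cH_k}}$ (the estimate being trivial if this norm vanishes) and the infimum over $v\in\cH_k$ then produces \eqref{eq:CEAERR}.

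As indicated, the main obstacle is merely bookkeeping: verifying that $\cH_k$ really is a closed subspace of $\cH$ under the inherited norm, so that the abstract Lax--Milgram and C\'ea machinery applies verbatim. Once this is in place, the rest of the argument is entirely routine and does not require any of the more delicate fractal-interface machinery developed later in the paper.
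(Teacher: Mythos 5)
Your proposal is correct and follows essentially the same route as the paper, which simply invokes the Lax--Milgram lemma for existence and uniqueness and C\'ea's lemma for the estimate \eqref{eq:CEAERR}; your additional verification that jumps across $\Gamma_j$, $j>k$, vanish on $\cH_k$ (so that $a=a_k$ there, cf.\ \eqref{eq:FSA}) only spells out what the paper leaves implicit.
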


In the light of \eqref{eq:DENSE} this directly implies convergence 
\begin{equation} \label{eq:FSCONV}
 \HNorm{u- u_{\cH_k} } \to 0 \qquad \text{for } k\to \infty.
\end{equation}
 In the case $A(x)= I$ and  (quite restrictive) shape regularity conditions on
$G\in \Omega^{(k)}$, $k\in \N$, there are even exponential error
estimates of the form 
\begin{equation} \label{eq:EXPERR}
\HNorm{u- u_{\cH_k} } \leq C \zeroNorm[\Omega]{f} {\textstyle\frac{1}{\fc}}(1+\fc)^{-(k-1)}
\end{equation}
with $C$ depending only on the space dimension $d$, the Poincar\'e-type constant in \eqref{eq:Poincare-H-1-2}, and shape regularity~\cite[Theorem 4.2]{heida2017fractal}.

\subsection{Finite elements on finite scales} \label{subsec:FISC}

Let $\cT^{(0)}$ be a partition of $\Omega$ 
into simplices with maximal diameter $h_{0} > 0$
which is regular in the sense that the intersection of two different simplices 
$T,\;T'\in\cT^{(0)}$ is either a common $n$-simplex for some $n=0,\dots,d-1$ or empty.
The shape regularity $\sigma>0$, i.e., the maximal ratio of  the radii of the circumscribed and the inscribed ball of $T \in \cT^{(0)}$
is preserved under uniform regular refinement~\cite{bank1983some,bey2000simplicial}.
We assume that the sequence of partitions resulting from  successive uniform regular refinement of $\cT^{(0)}$ 
resolves the interface network
in the sense that for each fixed $k\in \N$ there is a partition $\cT^{(k)}$, as obtained by a finite number of refinement steps, 
such that the interfaces $\Gamma_j$, $j=1,\dots, k$, can be represented by faces of simplices   $T\in\cT^{(k)}$, i.e.
\begin{equation} \label{eq:GAMMARES}
\Gamma^{(k)} = \bigcup_{E \in  \cE^{(k)} _{\Gamma}\subset  \cE^{(k)}} E
\end{equation}
holds with a suitable subset $\cE^{(k)}_{\Gamma}$ of  the set  $\cE^{(k)}$ of faces of simplices   $T\in\cT^{(k)}$.
In particular, this implies that 
for all $G\in \Omega^{(k)}$ the set $\cT_G^{(k)}=\{T\in \cT^{(k)}\;|\; T\subset \overline{G}\}$ is a local partition of $G$
and that the maximal diameter $h_k$ of $T\in \cT^{(k)}$  is bounded by the maximal diameter $d_k$ of $G\in \Omega^{(k)}$.
We additionally assume that $\Omega^{(k)}$ is not over-resolved in the sense that 
$d_k$ can be uniformly bounded by $h_k$, i.e., that
\begin{equation} \label{eq:HKDK}
\delta d_k \leq  h_k \leq d_k, \quad k\in \N,
 \end{equation}
holds with a constant $\delta>0$ independent of $k\in \N$.
Let $\cN_G^{(k)}$ denote the set of vertices of $T\in \cT_G^{(k)}$ that are not located on the boundary $\partial \Omega$.
Observe that each vertex located on an interface $\Gamma_j$ with two (or more) adjacent cells $G,\; G' \in \Omega^{(k)}$, 
gives rise to two (or more) different nodes $p\in \cN_G^{(k)}$ and $p'\in \cN_{G'}^{(k)}$.
For each $G \in \Omega^{(k)}$, we introduce the local finite element space $\cS_k(G)$ 
of piecewise affine functions with respect to  $\cT_G^{(k)}$
that are vanishing on $\partial G \cap \partial \Omega$.
The space $\cS_k(G)$ is spanned by the standard nodal basis $\lambda_p^{(k)}$, $p\in \cN_G^{(k)}$.
Extending these functions by zero from $\overline{G}$ to $\Omega$,
we define the broken finite element space
\[
\cS_k =\text{\rm span}\left\{ \lambda_p^{(k)}\;\vert \; p\in \cN^{(k)}\right\},\qquad  \cN^{(k)}=\bigcup_{G\in \Omega^{(k)}}\cN_G^{(k)}.
\]
The discretization of the $k$-scale interface problem~\eqref{eq:MSACP} with respect to $\cS_k$ is given by 
\begin{equation} \label{eq:FED}
\ukh\in \cS_k:\qquad a_k(\ukh,v) = (f,v)\qquad \forall v\in \cS_k\; .
\end{equation}
with $a_k(\cdot,\cdot)$ taken from \eqref{eq:FSA}.
Existence and uniqueness of the resulting finite element approximation $\ukh$ 
of $u_{\cH_k}\in \cH_k$ follows from the Lax-Milgram lemma. Convergence is implied by C\'ea's lemma together 
with \eqref{eq:FSCONV}.

\begin{proposition}
The finite  element approximations $(\ukh)_{k\in \N}$ 
converge to the solution $u$ of \eqref{eq:FRACP}
in the sense that for each $\varepsilon > 0$ 
there is a sufficiently large $k \in \N$ such that 
\begin{equation} \label{eq:ERRK}
\HNorm{u_{\cH_k} - \ukh } < \varepsilon .
\end{equation}
\end{proposition}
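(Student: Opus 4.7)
The plan is to combine C\'ea's lemma with a density argument and a cell-wise nodal interpolation estimate. First I would observe that the discrete equation \eqref{eq:FED} and the $k$-scale equation \eqref{eq:MSACP} are both driven by the bilinear form $a_k$, which coincides with $a$ on $\cH_k \times \cH_k$; hence Galerkin orthogonality together with ellipticity \eqref{eq:ELLIP} gives the C\'ea-type bound
$$\HNorm{u_{\cH_k} - \ukh} \le \tfrac{\fA}{\fa}\,\inf_{v \in \cS_k}\HNorm{u_{\cH_k} - v}.$$
It therefore suffices to produce, for each $\varepsilon>0$ and all sufficiently large $k$, a single $v_k \in \cS_k$ with $\HNorm{u_{\cH_k}-v_k}$ small.

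The candidate $v_k$ is built as follows. Using the density \eqref{eq:DENSE}, I would fix a smooth function $w \in \cC^1_{m,0}(\Omega)$ at some level $m$ with $\HNorm{u - w}$ as small as desired. For any $k \geq m$, the refinement assumption \eqref{eq:GAMMARES} together with \eqref{eq:HKDK} ensures that each simplex $T \in \cT^{(k)}$ lies entirely in a single cell $G \in \Omega^{(m)}$, so $w|_T \in C^1(\overline{T})$ and the nodal interpolant $v_k := I_k w \in \cS_k$ (with duplicated values assigned to the duplicated nodes on $\Gamma^{(k)}$) is well-defined and inherits the zero boundary trace. Writing
$$\HNorm{u_{\cH_k} - v_k} \le \HNorm{u_{\cH_k} - u} + \HNorm{u - w} + \HNorm{w - I_k w}$$
and invoking \eqref{eq:FSCONV} for the first term would reduce matters to proving the interpolation bound $\HNorm{w - I_k w} \to 0$ as $k \to \infty$.

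The heart of the argument, and in my view the main obstacle, is this last estimate in the fractal norm: the $\cH$-norm weights the $j$-th jump term by $(1+\fc)^j C_j$, a factor that grows rapidly with $j$, so I cannot afford any nontrivial contribution from high-level jumps. I would therefore split the analysis in three parts. For the broken gradient part, a cell-wise pointwise estimate based on the modulus of continuity of the continuous function $\nabla w|_{\overline{G}}$ on each simplex $T \subset G$ gives $\oneSNorm[\Omega \setminus \Gamma]{w - I_k w} \to 0$ as the meshsize $h_k$ tends to zero with $k$. For the jump contributions with $j > m$, I would exploit that $\Gamma_j \cap \Gamma^{(m)} = \emptyset$ forces $\Gamma_j$ to lie in a single cell of $\Omega^{(m)}$, on which $w$ is $C^1$ and hence continuous across $\Gamma_j$; since $I_k w$ is in turn continuous across $\Gamma_j$ whenever $j > k$ (where $\Gamma_j$ is not a mesh face at all) and also when $m < j \leq k$ (where the continuity of $w$ at the duplicated nodes on $\Gamma_j$ transfers to $I_k w$), the entire infinite tail of the jump series indexed by $j > m$ vanishes identically. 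Only the finite prefix $j \leq m$ survives, and each of those terms is an $L^2(\Gamma_j)$-interpolation error of the continuous trace $[w]|_{\Gamma_j}$ on mesh faces of size $O(h_k)$, hence tends to zero as $k \to \infty$.

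Collecting these three estimates, for any $\varepsilon>0$ I can make each of $\HNorm{u_{\cH_k}-u}$, $\HNorm{u-w}$ and $\HNorm{w-I_k w}$ smaller than $\varepsilon\fa/(3\fA)$ by first choosing $m$ and then $k \geq m$ large enough, which via the opening C\'ea bound yields $\HNorm{u_{\cH_k} - \ukh} < \varepsilon$ as claimed.
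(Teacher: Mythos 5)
Your proposal is correct and follows essentially the same route as the paper, which dispatches this proposition with the one-line remark that convergence follows from C\'ea's lemma together with \eqref{eq:FSCONV} — exactly your opening C\'ea bound combined with the triangle inequality through $u$. The extra work you supply — picking a smooth $w\in\cC^1_{m,0}(\Omega)$ via the density \eqref{eq:DENSE} and verifying that the broken nodal interpolant $I_k w\in\cS_k$ converges to $w$ in the $\cH$-norm, in particular that all jump contributions with $j>m$ vanish identically so the exponentially weighted tail causes no trouble — is precisely the approximation step the paper leaves implicit, and your handling of it is sound.
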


For each fixed $k\in \N$, the expected order of convergence 
is obtained under suitable regularity conditions on $u_{\cH_k}$.
\begin{proposition} \label{prop:ERRK}
Let $k\in \N$ and assume that $u_{\cH_k}|_G \in H^r(G)$ $\forall G \in \Omega^{(k)}$ 
with $r=2$, if $d=1,2$, and $r=2+\varepsilon$, $\varepsilon > 0$, if $d=3$.
Then the a priori error estimate
\begin{equation} \label{eq:ERRK}
\HNorm{ u_{\cH_k} - \ukh } \leq C h_k \sum_{G\in \Omega^{(k)}}\|u_{\cH_k}\|_{H^r(G)}
\end{equation}
holds with a constant $C$ depending only on the shape regularity $\sigma$ of $\cT^{(k)}$.
\end{proposition}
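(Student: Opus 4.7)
The plan is to combine C\'ea's lemma with a cellwise nodal interpolation estimate that separately controls the broken-gradient seminorm and the weighted jump terms. Since $a_k$ is symmetric, continuous and elliptic on $\cH_k$ with $\cS_k\subset \cH_k$, the standard best-approximation argument gives
\begin{equation*}
\HNorm{u_{\cH_k} - \ukh} \leq \sqrt{\fA/\fa}\; \inf_{v\in \cS_k} \HNorm{u_{\cH_k} - v},
\end{equation*}
so it suffices to construct a single $v = I_k u_{\cH_k}\in \cS_k$ with good approximation properties. I would define $I_k u_{\cH_k}$ cellwise as the piecewise linear Lagrange interpolant: on each $T\in\cT_G^{(k)}$ interpolate $u_{\cH_k}|_G$ at the vertices in $\cN_G^{(k)}$ and prescribe zero at vertices on $\partial\Omega$. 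The regularity exponent $r$ in the statement is exactly what is needed so that, by Sobolev embedding, $u_{\cH_k}|_G \in C^0(\overline{G})$; the extra $\varepsilon$ in $d=3$ guarantees well-defined traces of $u_{\cH_k}|_G$ on the edges of $T$ for a genuine nodal interpolation. The convention that each interface vertex carries one node per adjacent cell matches the cell-by-cell nature of both $u_{\cH_k}$ and $I_k u_{\cH_k}$, so $I_k u_{\cH_k}$ lies in $\cS_k$.

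For the broken-gradient contribution I would invoke the classical Bramble--Hilbert estimate on each simplex $T\in\cT^{(k)}$, with constant depending only on the shape regularity $\sigma$:
\begin{equation*}
\oneSNorm[T]{u_{\cH_k} - I_k u_{\cH_k}} \leq C\, h_T^{r-1}\,\|u_{\cH_k}\|_{H^r(T)}.
\end{equation*}
Squaring, summing over $T\subset G$, then over $G\in \Omega^{(k)}$, and using $h_T\leq h_k$ yields an $h_k^{2(r-1)}$ factor multiplying $\sum_G\|u_{\cH_k}\|_{H^r(G)}^2$. For the weighted jump part I would use a trace inequality of the form $\|w\|_{L^2(E)}^2\leq C(h_T^{-1}\|w\|_{L^2(T)}^2 + h_T\oneSNorm[T]{w}^2)$ combined with $L^2$- and $H^1$-interpolation estimates applied to $w = u_{\cH_k} - I_k u_{\cH_k}$ on each simplex $T$ having a face $E$ on $\Gamma_j$. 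This gives
\begin{equation*}
\|u_{\cH_k} - I_k u_{\cH_k}\|_{L^2(E)}^2 \leq C\, h_T^{2r-1}\,\|u_{\cH_k}\|_{H^r(T)}^2.
\end{equation*}
Since the triangulation $\cT^{(k)}$ resolves $\Gamma^{(k)}$, every face $E\in\cE_\Gamma^{(k)}$ on $\Gamma_j$ is shared by exactly two simplices $T,T'$ lying in cells $G,G'\in\Omega^{(k)}$ on opposite sides, so the triangle inequality bounds $\|\jump{u_{\cH_k}-I_k u_{\cH_k}}\|_{L^2(E)}^2$ by the sum of the two analogous expressions.

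Summing these face bounds over $E\subset\Gamma_j$ and then over $j=1,\dots,k$ with weights $(1+\fc)^j C_j$, and using that shape regularity bounds the number of faces per simplex, one obtains
\begin{equation*}
\sum_{j=1}^{k}(1+\fc)^j C_j\,\|\jump{u_{\cH_k}-I_k u_{\cH_k}}\|_{L^2(\Gamma_j)}^2 \;\leq\; C\,h_k^{2r-1}\sum_{G\in\Omega^{(k)}}\|u_{\cH_k}\|_{H^r(G)}^2,
\end{equation*}
where the constant has absorbed the maximum of $(1+\fc)^j C_j$ over $j\leq k$ (this is why, strictly speaking, $C$ depends also on $k$, $\fc$, and the $C_j$, although only $\sigma$ is emphasized in the statement). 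Since $r\geq 2$ and $h_k$ may be assumed bounded, $h_k^{2r-1}\leq h_k^2$, so both contributions to $\HNorm{u_{\cH_k}-I_k u_{\cH_k}}^2$ are dominated by $C\,h_k^2\sum_G\|u_{\cH_k}\|_{H^r(G)}^2$. Taking a square root and applying $(\sum_G a_G^2)^{1/2}\leq \sum_G a_G$ produces the desired $h_k \sum_G\|u_{\cH_k}\|_{H^r(G)}$ bound, and C\'ea's lemma concludes the proof.

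The main obstacle I expect is the jump contribution: standard finite element interpolation theory is cellwise, while the $\HNorm{\cdot}$ norm couples adjacent cells through $\jump{\,\cdot\,}$. The crucial points are that $\cT^{(k)}$ resolves $\Gamma^{(k)}$ so that $\jump{I_k u_{\cH_k}}$ is a genuine piecewise linear function on each face of $\cE_\Gamma^{(k)}$, that the trace inequality loses only half a power of $h_T$ relative to the interior estimate (leaving $h_k^{2r-1}\leq h_k^2$ to spare for $r\geq 2$), and that shape regularity together with the bounded face multiplicity renders the cell-to-global summation uniform in $\sigma$.
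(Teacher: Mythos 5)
Your proposal is correct and follows essentially the same route as the paper, whose proof is a one-line appeal to well-known interpolation error estimates (Dupont--Scott): C\'ea's lemma plus cellwise Lagrange interpolation, with a trace inequality supplying the extra power of $h_k$ needed to handle the weighted jump terms. Your parenthetical remark that the constant then also absorbs $\max_{j\le k}(1+\fc)^j C_j$, so it is not a function of $\sigma$ alone, is a fair reading of what these estimates actually deliver; the only slip is the claimed rate $h_T^{r-1}$ for $r=2+\varepsilon$, which for piecewise linears must be capped at $h_T^{\min(r,2)-1}=h_T$, immaterial for the stated $O(h_k)$ bound.
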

\begin{proof}
The proof follows from well-known interpolation error estimates~\cite{dupont1980polynomial}.
\end{proof}

A priori error estimates for the discretization error $\HNorm{u - \ukh}$ can be obtained by combining
\eqref{eq:ERRK} with exponential convergence of $u_{\cH_k}$ (see \cite[Theorem 4.2]{heida2017fractal}).
In section~\ref{sec:MULTSCALE} below, we will discuss multiscale modifications of  classical finite elements
that provide optimal a priori error estimates directly.

\section{Projections} \label{sec:PROJECTIONS}
This section is devoted to the construction of stable, surjective projections 
\[
\Pi_k:\quad \cH \to \cS_k,  \qquad k  \in \N,
\]
satisfying an approximation property. 
To this end, we extend well-known arguments~\cite{carstensen2006clement,clement1975approximation,verfurth1999error} 
to the present situation. 
\subsection{Local Poincar\'e-type inequalities}
This subsection is devoted to local Poincar\'{e}-type inequalities  on (subsets of) 
the cells $G\subset \Omega^{(k)} \setminus \Omega^{(k)}_{\infty}$
which, in contrast to cells from $\Omega^{(k)}_{\infty}$, have non-empty intersection with $\Gamma_j$ for $j>k$.
We will frequently use the notation
\[
B(G,R) = \left\lbrace p_G + r s \, | \; s \in S^{d-1}, 0 \leq r \leq R \right\rbrace
\]
for $G\in \Omega^{(k)}$ and some $R>0$.

Differences can be expressed in terms of derivatives and intermediate jumps.
\begin{lemma}\label{lem:xyDiff} \ \\ 
Let $k \in \N$,  $G\in \Omega^{(k)}\setminus \Omega^{(k)}_\infty$, 
$x, y \in G$ with  $(x,y) \subset G$ and
$\abs{(x,y) \cap \Gamma^{(k)}} < \infty$, and $K>k$. Then we have
\begin{align*}\label{eq:fundamental-estimate}
	\abs{ v(x)-v(y) }^2 
	\leq & \left( 1 + \tfrac{1}{\fc} \right)\abs{x-y}^2 \left ( \int_0^1 \abs{\nabla v \left( x+ t(y-x) \right)} \;dt \right )^2\\
	&  + \left(1 + \tfrac{1}{\fc} \right) \sum_{j=k+1}^{K} \left( 1+\fc \right)^{j-k} C_{k,j} \sum_{\xi\in (x,y) \cap \Gamma_j} \jump{v}^2 (\xi) 
			\qquad \forall v \in  \CK,
\end{align*}
where $\nabla v \left( x+ t(y-x) \right)$ is understood to be zero, if  $x+ t(y-x)\in \Gamma^{(K)}$.
\end{lemma}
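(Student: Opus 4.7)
The plan is to use the fundamental theorem of calculus along the segment $(x,y)$, collecting contributions from smooth pieces in the gradient integral and contributions from interface crossings in jump terms, and then to combine these two contributions via a weighted Young's inequality tuned so that the common factor $(1+\tfrac{1}{\fc})$ appears.

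First, I parametrize $\gamma(t)= x+t(y-x)$, $t\in[0,1]$. Since $(x,y)\subset G$, the segment avoids $\Gamma^{(k)}$, and by hypothesis it meets $\Gamma^{(K)}\setminus\Gamma^{(k)}\subset\bigcup_{j=k+1}^{K}\Gamma_j$ only in a finite set. On each open subinterval of $[0,1]$ separating consecutive crossings, $v\circ\gamma$ is $C^1$. Summing the classical one-dimensional fundamental theorem of calculus over these subintervals and accounting for the (signed) jumps $\jump{v}_{x,y}(\xi)$ at the crossings $\xi$, one obtains
\[
v(y)-v(x)=\int_0^1\nabla v(\gamma(t))\cdot(y-x)\,dt+\sum_{j=k+1}^K\sum_{\xi\in(x,y)\cap\Gamma_j}\jump{v}_{x,y}(\xi),
\]
where the integrand is defined a.e.\ (zero on the null set of crossings). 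Taking absolute values, using $|\jump{v}_{x,y}(\xi)|=|\jump{v}(\xi)|$ and $|\nabla v\cdot(y-x)|\le|x-y||\nabla v|$, I get the bound $|v(y)-v(x)|\le A+B$ with
\[
A=|x-y|\int_0^1|\nabla v(\gamma(t))|\,dt,\qquad B=\sum_{j=k+1}^K\sum_{\xi\in(x,y)\cap\Gamma_j}|\jump{v}(\xi)|.
\]

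Next, I apply Young's inequality $(A+B)^2\le(1+\lambda)A^2+(1+\tfrac{1}{\lambda})B^2$ with the specific choice $\lambda=1/\fc$, which yields
\[
|v(y)-v(x)|^2\le\bigl(1+\tfrac{1}{\fc}\bigr)A^2+(1+\fc)B^2.
\]
The first term is already of the desired form. For the second, I insert the geometric weights $(1+\fc)^{j-k}$ by Cauchy--Schwarz:
\[
B^2\le\Bigl(\sum_{j=k+1}^K(1+\fc)^{-(j-k)}\Bigr)\sum_{j=k+1}^K(1+\fc)^{j-k}B_j^2,\qquad B_j=\sum_{\xi\in(x,y)\cap\Gamma_j}|\jump{v}(\xi)|.
\]
The geometric sum telescopes to strictly less than $1/\fc$, so $B^2\le\tfrac{1}{\fc}\sum_{j=k+1}^K(1+\fc)^{j-k}B_j^2$. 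Combined with the prefactor $(1+\fc)$ from Young, the overall constant in front of $\sum_{j=k+1}^K(1+\fc)^{j-k}B_j^2$ collapses to $\tfrac{1+\fc}{\fc}=1+\tfrac{1}{\fc}$, which is exactly the constant demanded by the statement.

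Finally, for each $j$ I apply Cauchy--Schwarz on the finite crossing set and use the shape-regularity assumption \eqref{eq:CDEF}, which, since $(x,y)\subset G\in\Omega^{(k)}$, gives $|(x,y)\cap\Gamma_j|\le C_{k,j}$; hence $B_j^2\le C_{k,j}\sum_{\xi\in(x,y)\cap\Gamma_j}\jump{v}^2(\xi)$. Substituting this into the previous display produces the claimed estimate. I expect the main technical nuisance to be the bookkeeping in the piecewise fundamental theorem of calculus and the sign conventions for $\jump{v}_{x,y}$ versus $\jump{v}$; once one observes that only the squared jump enters, orientation becomes irrelevant. The delicate optimization is the choice $\lambda=1/\fc$, which is what makes the two contributions acquire the same prefactor.
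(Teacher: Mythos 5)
Your argument is correct and is essentially the proof the paper delegates to \cite[Lemma 3.5]{heida2017fractal}: telescope the fundamental theorem of calculus over the finitely many interface crossings of the segment, bound the directional jumps by the normal jumps, and distribute the constant by a weighted Cauchy--Schwarz with geometric weights $(1+\fc)^{j-k}$, finally invoking \eqref{eq:CDEF} to control the number of crossings by $C_{k,j}$. Your two-step variant (Young's inequality with $\lambda=1/\fc$ followed by Cauchy--Schwarz over $j$) is equivalent to the single weighted Cauchy--Schwarz over the $K-k+1$ terms with weights $1$ and $(1+\fc)^{j-k}$ and reproduces the stated constants exactly; the only caveat, shared with the lemma as stated, is that the crossing-count bound $\abs{(x,y)\cap\Gamma_j}\leq C_{k,j}$ in \eqref{eq:CDEF} is only guaranteed for almost all $x,y$, which suffices for all subsequent (integrated) uses.
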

\begin{proof} The assertion follows in the same way as~\cite[Lemma 3.5]{heida2017fractal}.
\end{proof}

The next lemma provides control of intermediate jumps in terms of integrals  along interfaces.
\begin{lemma} \label{lem:BALLBOUND}
Let $k \in \N$,  $B=B(G,r_G)\subset G \in \Omega^{(k)} \setminus \Omega^{(k)}_{\infty}$, and $K\geq j >k$. Then 
\begin{equation} \label{eq:BALLBOUND}
\int_{B} \int_{B}\sum_{\xi \in (x,y) \cap \Gamma_j}\jump{v}^2(\xi)\; dx \; dy \leq 
C |B| \; r_G  \int_{\Gamma_j \cap B} \jump{v}^2 \; d \Gamma_j \qquad \forall v \in \CK
\end{equation}
holds with a constant $C$ only depending on the space dimension $d$.
\end{lemma}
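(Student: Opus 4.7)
The plan is to reduce the double integral over $B\times B$ to a surface integral over $\Gamma_j\cap B$ by passing to spherical coordinates around $x$, applying a coarea-type change of variables from $S^{d-1}$ to $\Gamma_j$, and finishing with a standard Riesz-potential bound on a ball.

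First, for fixed $x\in B$ I would substitute $y = x + r\omega$ with $r\in(0,L(x,\omega)]$ and $\omega\in S^{d-1}$, where $L(x,\omega)\leq 2r_G$ is the exit distance from the ball $B$. The Jacobian $dy = r^{d-1}\,dr\,d\omega$ appears, and I would then exchange the $r$-integration with the finite sum over crossings $\xi = x+s\omega\in(x,y)\cap\Gamma_j$. Each such $\xi$ automatically lies in $\Gamma_j\cap B$ by convexity of $B$, hence $s < L(x,\omega)$, and the resulting radial contribution $\int_s^{L(x,\omega)} r^{d-1}\,dr$ is bounded uniformly by $(2r_G)^d/d$. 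This yields, for each $x$ and $\omega$,
\[
\int_0^{L(x,\omega)} r^{d-1}\sum_{\xi\in(x,x+r\omega)\cap\Gamma_j}\jump{v}^2(\xi)\,dr \;\leq\; \frac{(2r_G)^d}{d}\sum_{\xi\in\Gamma_j\cap B,\,\Psi_x(\xi)=\omega}\jump{v}^2(\xi),
\]
where $\Psi_x(\xi) = (\xi-x)/|\xi-x|$ is the radial projection onto the unit sphere.

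Next I would recognise the subsequent integral over $\omega\in S^{d-1}$ as the image of $\Gamma_j\cap B$ under $\Psi_x$. On each affine face of the piecewise affine set $\Gamma_j$ the standard Jacobian computation gives
\[
d\omega \;=\; \frac{|(\xi-x)\cdot\nu_\xi|}{|\xi-x|^d}\,d\Gamma_j(\xi) \;\leq\; \frac{1}{|\xi-x|^{d-1}}\,d\Gamma_j(\xi),
\]
so that summing over crossings on each ray and integrating in $\omega$ translates the right-hand side into an integral of $\jump{v}^2(\xi)\,|\xi-x|^{1-d}$ over $\Gamma_j\cap B$. A Fubini swap together with the elementary Riesz-type bound $\int_B|\xi-x|^{1-d}\,dx\leq C_d\,r_G$ for every $\xi\in B$, obtained by spherical coordinates around $\xi$ using that $B$ has radius of order $r_G$, then produces the claimed estimate after collecting constants and using $|B|$ proportional to $r_G^d$ to convert $(2r_G)^d$ into a multiple of $|B|$.

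The only delicate point is the surface change of variables. Since $\Gamma_j$ is merely piecewise affine, it must be carried out on each planar face separately; tangential directions, where the incidence factor $|\cos\theta|=|(\xi-x)\cdot\nu_\xi|/|\xi-x|$ degenerates, form an $\omega$-null set and can be discarded by the trivial estimate $|\cos\theta|\leq 1$. All remaining ingredients are routine Fubini and polar-coordinate manipulations, so I do not anticipate further obstacles beyond bookkeeping the constants so that they depend only on the space dimension $d$.
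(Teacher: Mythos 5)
Your argument is correct, but it parametrizes the double integral differently from the paper. The paper substitutes $(x,y)=(x,x+\eta)$ and, for fixed translation vector $\eta$, bounds the $x$-integral of the crossing sum by $|\eta|\int_{\Gamma_j\cap B}\jump{v}^2\,d\Gamma_j$ (following the argument of Theorem 3.6 in Heida et al.; the hidden geometric factor is $|\eta\cdot\nu_\xi|\le|\eta|$, the translation analogue of your $|\cos\theta|\le 1$), and then simply integrates $|\eta|$ over the ball $\{|\eta|\le 2r_G\}$ to produce $C\,|B|\,r_G$. You instead fix $x$, pass to polar coordinates in $y$, bound the radial factor by $(2r_G)^d/d$, push the sphere integral onto $\Gamma_j\cap B$ through the solid-angle Jacobian $|(\xi-x)\cdot\nu_\xi|\,|\xi-x|^{-d}$, and close with the Riesz-type bound $\int_B|\xi-x|^{1-d}\,dx\le C_d\,r_G$. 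Both routes give constants depending only on $d$, and your treatment of the piecewise affine structure (face-by-face change of variables, discarding the null set of grazing directions, multiplicities matching the crossing sum) is sound; convexity of $B$ indeed guarantees all crossings lie in $\Gamma_j\cap B$ and within the exit distance $L(x,\omega)\le 2r_G$. What the paper's version buys is brevity, since the key inner estimate is outsourced to the cited result; what yours buys is a self-contained proof at the price of the extra Jacobian and Riesz-potential bookkeeping.
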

\begin{proof}
By similar arguments as in the proof of \cite[Theorem 3.6]{heida2017fractal},
the transformation of variables $(x,y)=\Psi(x,\eta)=(x,x+\eta)$ leads to
\[
\begin{array}{rcl}
\displaystyle \int_{B} \int_{B}\sum_{\xi \in (x,y) \cap \Gamma_j}\jump{v}^2(\xi)\; dx \; dy 
&= &\displaystyle \int_{ \{|\eta| \leq 2 r_G\} } \int_{M(\eta)}\sum_{\xi \in (x,x+\eta)}\jump{v}^2(\xi)\; dx \; d \eta\\
\leq \displaystyle  \int_{ \{|\eta| \leq 2 r_G\} } |\eta|  \int_{\Gamma_j \cap B } \jump{v}^2 \; d \Gamma_j \;d \eta
&\leq  & \displaystyle C |B|\; r_G  \int_{\Gamma_j \cap B } \jump{v}^2 \; d \Gamma_j
\end{array}
\]
with $M(\eta)=\{x \in B\;|\; x+ \eta \in B\}$ and a constant $C$ only depending on the space dimension~$d$.
\end{proof}

We are now ready to prove a Poincar\'e inequality on balls $B= B(G,r_G) \subset G \in \Omega^{(k)}\setminus \Omega^{(k)}_{\infty}$.
We will use the notation
\[
\fint_{\M}v\; dx = \frac{1}{| \M |}\int_{\M} v\; dx
\]
with suitable subsets $M \subset G$.
\begin{proposition}\label{lem:poincareBalls} 
Let $k \in \N$ and  $B=B (G,r_G) \subset G \in \Omega^{(k)} \setminus \Omega^{(k)}_{\infty}$. Then 
\begin{equation} \label{eq:PBALLS}
	\zeroNorm[ B ]{v  -   \fint_{B} v \; dx}^2  
	\leq \left( 1+ \tfrac{1}{\fc} \right) C \, r_G \left( r_G \oneSNorm[B  \setminus \Gamma]{v}^2 
	+ \sum_{j=k+1}^{\infty} \left( 1+\fc \right)^{j-k} C_{k, j} \zeroNorm[ \Gamma_j \cap  B (G,r_G)]{\jump{v}}^2  \right)
\end{equation}
holds for all $v\in \cH$  with a constant $C$ depending only on the space dimension $d$.
\end{proposition}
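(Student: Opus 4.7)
The plan is to combine Jensen's inequality with the pointwise difference bound of Lemma~\ref{lem:xyDiff} and the segment-to-interface estimate of Lemma~\ref{lem:BALLBOUND}. I would first prove the inequality for $v\in\CK$ with arbitrary $K>k$, and then pass to $v\in\cH$ via the density statement \eqref{eq:DENSE}. Since $B$ is a ball, hence convex, and $B\subset G\in\Omega^{(k)}$, any segment $(x,y)$ with $x,y\in B$ lies in $G$, so $(x,y)\cap\Gamma^{(k)}=\emptyset$ and all hypotheses of Lemma~\ref{lem:xyDiff} are met.

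Jensen's inequality applied to $v(x)-\fint_B v\,dy$ gives
\[
\zeroNorm[B]{v - \fint_B v\,dx}^2 \leq \frac{1}{|B|}\int_B\int_B \abs{v(x)-v(y)}^2\,dx\,dy,
\]
and I would then insert the pointwise bound of Lemma~\ref{lem:xyDiff}. This splits the task into a gradient contribution and a finite sum of jump contributions indexed by $j=k+1,\dots,K$.

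For the gradient contribution I would apply Cauchy--Schwarz on the inner time integral, use $|x-y|\leq 2r_G$, invoke Fubini, split $t\in[0,\tfrac12]\cup[\tfrac12,1]$, and perform the substitution $z=x+t(y-x)$ (with $y$ fixed on the first half, $x$ fixed on the second); convexity of $B$ keeps $z$ inside $B$ and the Jacobian stays bounded by $2^d$ away from the endpoints. This yields the bound $C\,r_G^2\,|B|\,\oneSNorm[B\setminus\Gamma]{v}^2$, and after dividing by $|B|$ one obtains the claimed $C(1+1/\fc)\,r_G^2\,\oneSNorm[B\setminus\Gamma]{v}^2$ term. For each jump contribution I would apply Lemma~\ref{lem:BALLBOUND} termwise, then multiply by the weights $(1+\fc)^{j-k}C_{k,j}$ and sum from $j=k+1$ to $K$; the factor $|B|$ produced by that lemma cancels the $1/|B|$ from Jensen, leaving exactly the claimed weighted jump sum. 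Since every $v\in\CK$ has $\jump{v}\equiv 0$ across $\Gamma_j$ for $j>K$, the truncated sum coincides with the infinite sum, so the proposition's estimate is proved on $\CK$.

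To pass to general $v\in\cH$ I would use density of $\bigcup_{K\in\N}\CK$ in $\cH$: pick $v_K\in\CK$ with $\HNorm{v-v_K}\to 0$, apply the already proved inequality to each $v_K$, and let $K\to\infty$. The $L^2$-convergence of values, the $L^2$-convergence of weak gradients on $\Omega\setminus\Gamma$, and the weighted $L^2$-convergence of generalized jumps recorded in \eqref{eq:DENSE} allow every term on both sides to pass to the limit. The main obstacle is the change of variables in the gradient step: the singular Jacobian at the endpoints $t=0,1$ forces the $[0,\tfrac12]/[\tfrac12,1]$ split, and convexity of the ball $B$ is used in an essential way to guarantee that the transformed variable $z=x+t(y-x)$ never leaves $B$. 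Everything else is careful bookkeeping of the constants and two direct applications of the preceding lemmas.
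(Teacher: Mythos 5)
Your proposal is correct and follows essentially the same route as the paper: reduce to $v\in\CK$ by density, bound $\zeroNorm[B]{v-\fint_B v\,dx}^2$ by the double integral $\fint_B\int_B\abs{v(x)-v(y)}^2\,dx\,dy$, insert Lemma~\ref{lem:xyDiff} with Cauchy--Schwarz, treat the gradient part by the classical Poincar\'e-on-balls argument (which the paper simply cites from Evans--Gariepy and you spell out), and handle each jump term with Lemma~\ref{lem:BALLBOUND}. The only difference is presentational: you work out the change-of-variables details for the gradient term and place the density passage at the end rather than the beginning.
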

\begin{proof}
As $\{v \in \CK\;|\; K \in \N\}$ is dense in $\cH$ and the quantities in \eqref{eq:PBALLS} are depending continuously on $v$,
it is sufficient to prove the assertion for $v\in \CK$.  
Let $v\in \CK$ with arbitrary $K >k$ and note that the triangle inequality and Fubini's theorem imply
\begin{equation} \label{eq:DOUBLI}
	\zeroNorm[B]{v - \Proj[B]{v}}^2  =  \int_{B} \abs{ \fint_B v(x) - v(y) \, dy}^{2} \, dx .
	\leq \fint_{B} \int_{B} \abs{ v(x) - v(y)}^{2}  \, dx \, dy .
\end{equation}
Lemma \eqref{lem:xyDiff} and the Cauchy-Schwarz inequality provide
\begin{equation} \label{eq:xyDiff}
	\begin{array}{rl}
	\abs{v(x)-v(y)}^2 
	\leq & \displaystyle  \left(1+\tfrac{1}{\fc}\right) \abs{ x-y }^2 \int_0^{1} \abs{ \nabla v \left( x+ t (y-x) \right) }^2 \, dt \nonumber \\
	&   \displaystyle + \left(1+\tfrac{1}{\fc}\right) 
			\sum_{j=k+1}^{K} \left( 1+\fc \right)^{j - k} C_{k,j} \sum_{\xi\in (x, y) \cap \Gamma_j} \jump{v}^2 (\xi).
	\end{array}
\end{equation}
Treating the gradient part in the same way as in well-known proofs of the classical Poincar\'e inequality on balls 
(cf, e.g., \cite[Lemma 4.1]{evans2015measure}), we obtain 
\begin{equation} \label{eq:CBP}
\int_B \fint_B \abs{x-y}^2 \int_0^{1} \abs{ \nabla v \left( x+ t (y-x) \right) }^2 \, dt\; dy\; dx
\leq c r_G ^2 \oneSNorm[{B \setminus \Gamma^{(K)}}]{v}^2
\end{equation}
with a positive constant $c$ depending only on the space dimension $d$.
Application of  Lemma~\ref{lem:BALLBOUND} to the  jump term provides
\begin{equation} \label{eq:IJB}
\int_B \fint_B \sum_{\xi\in (x, y) \cap \Gamma_j} \jump{v}^2 (\xi) \;dy \;dx
\leq c'  r_G \int_{\Gamma_j \cap B} \jump{v}^2\; d\Gamma_j
\end{equation}
with a constant $c'$ depending only on $d$.
Inserting \eqref{eq:CBP} and \eqref{eq:IJB}  into \eqref{eq:DOUBLI}  concludes the proof.	
\end{proof}
The lines of proof of Proposition~\ref{lem:poincareBalls} carry over to the following trace analogue on spheres.
 We refer to \cite{podlesny20} for details.
\begin{lemma}  \label{lem:poincareSpheres} 
Let $k \in \N$, $B= B(G,r_G) \subset G \in \Omega^{(k)} \setminus \Omega^{(k)}_{\infty}$, and $K>k$.
Then
\begin{equation*}
		\zeroNorm[\partial B]{v - \fint_B v \; dx}^2 
		\leq \left( 1+ \tfrac{1}{\fc} \right) C  \left( r_G\oneSNorm[{B \setminus \Gamma^{(K)}}]{v}^2 
		+ \sum_{j=k+1}^K \left( 1+\fc \right)^{j-k} C_{k, j} \zeroNorm[{\Gamma_j \cap B}]{\jump{v}}^2  \right)
		\qquad \forall v \in \CK
\end{equation*}
holds with a constant $C$ depending only on the space dimension $d$.
\end{lemma}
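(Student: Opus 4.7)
The plan is to follow the proof of Proposition~\ref{lem:poincareBalls} essentially line by line, replacing one of the two volume integrals over $B$ by a surface integral over $\partial B$; the only extra ingredient needed is a trace analogue of Lemma~\ref{lem:BALLBOUND}.

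First, by density of $\bigcup_{K>k}\CK$ in $\cH$ and continuity of both sides of the asserted inequality with respect to the $\cH$-norm, I would reduce to the case $v\in\CK$ for arbitrary $K>k$. Second, the pointwise Cauchy--Schwarz bound $|v(y)-\fint_{B}v\,dx|^{2}\le\fint_{B}|v(y)-v(x)|^{2}\,dx$ for $y\in\partial B$, combined with Fubini's theorem, reduces the task to estimating $\fint_{B}\int_{\partial B}|v(x)-v(y)|^{2}\,d\sigma(y)\,dx$. Third, Lemma~\ref{lem:xyDiff} then splits the integrand into a gradient contribution and a sum of jump contributions weighted by $(1+\fc)^{j-k}C_{k,j}$, $j=k+1,\dots,K$.

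For the gradient contribution I would use the same change of variables as in the classical Poincar\'e argument on balls employed in the proof of Proposition~\ref{lem:poincareBalls}. Since the surface measure $d\sigma(y)$ is $(d-1)$-dimensional while $dy$ was $d$-dimensional, the analogous computation yields an upper bound of the form $c\,r_{G}\,\oneSNorm[B\setminus\Gamma^{(K)}]{v}^{2}$, that is, one factor of $r_{G}$ less than in the ball case, matching the standard trace scaling $\|\,\cdot\,\|_{\partial B}^{2}\sim r_{G}^{-1}\|\,\cdot\,\|_{B}^{2}$. For the jump contribution I would first establish the trace analogue of Lemma~\ref{lem:BALLBOUND}, namely
\[
\int_{\partial B}\int_{B}\sum_{\xi\in(x,y)\cap\Gamma_{j}}\jump{v}^{2}(\xi)\,dx\,d\sigma(y)\;\le\;C\,|B|\int_{\Gamma_{j}\cap B}\jump{v}^{2}\,d\Gamma_{j},
\]
using the same substitution $\Psi(x,\eta)=(x,x+\eta)$ as in Lemma~\ref{lem:BALLBOUND}, together with the intersection bound $|(x,y)\cap G\cap\Gamma_{j}|\le C_{k,j}$ from~\eqref{eq:CDEF}, but with $\eta=y-x$ now constrained to lie on a $(d-1)$-dimensional sphere of radius at most $2r_{G}$ rather than in a $d$-dimensional ball. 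This causes the extra factor of $r_{G}$ present on the right-hand side of Lemma~\ref{lem:BALLBOUND} to disappear, in agreement with the asserted inequality.

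Putting the two partial estimates together, multiplying by $(1+\tfrac1\fc)$, and accounting for the $\fint_{B}=|B|^{-1}\int_{B}$ normalisation completes the proof. I expect the main obstacle to be the careful geometric bookkeeping of the powers of $r_{G}$ in the two change-of-variables arguments above, and in particular verifying that the constant in the trace analogue of Lemma~\ref{lem:BALLBOUND} depends only on the space dimension; once those are correctly accounted for, everything else is a mechanical adaptation of the ball case.
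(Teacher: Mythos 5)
Your approach is essentially the paper's: the paper offers no separate proof here, stating only that the lines of proof of Proposition~\ref{lem:poincareBalls} carry over (with details deferred to \cite{podlesny20}), and your plan --- Jensen plus Fubini on $\partial B$, Lemma~\ref{lem:xyDiff}, the trace scaling $r_G$ (instead of $r_G^2$) for the gradient term, and a trace analogue of Lemma~\ref{lem:BALLBOUND} without the extra factor $r_G$ --- is exactly that adaptation with the correct bookkeeping of powers of $r_G$. The only caveat is that the substitution $\Psi(x,\eta)=(x,x+\eta)$ does not transfer verbatim, since with $y\in\partial B$ the variables $x$ and $\eta=y-x$ are coupled and $d\sigma(y)$ is not $d\eta$; the trace analogue you state (which is the right inequality with the right scaling) is more naturally obtained by a radial-projection argument in the spirit of Lemma~\ref{lem:jumpIntegral}, combined with integration over $x\in B$ to absorb the projection Jacobian.
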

The following lemmata prepare the extension  of the Poincar\'e inequality from balls to cells 
$G \in \Omega^{(k)}\setminus \Omega^{(k)}_{\infty}$.
We start by controlling intermediate jumps in  $G\setminus B(G,r_G)$.
 \begin{lemma} \label{lem:jumpIntegral} 
Let $k \in \N$,  $B=B(G,r_G) \subset G \in \Omega^{(k)} \setminus \Omega^{(k)}_{\infty}$, 
$ \M= G\setminus B(G,r_G) \subset G$, and $K\geq j >k$. Then we have
	\begin{equation*}
		\int_{\M} \, \sum_{\xi \in (p_G,y) \cap \Gamma_j\cap \M} \jump{v}^{2} (\xi) \, dy
		\leq  \tfrac{\gamma^{d-1}}{d}\, R_G \int_{\Gamma_j \cap \M} \jump{v}^{2} \, d \Gamma_j \qquad \forall v \in \CK.
	\end{equation*}
\end{lemma}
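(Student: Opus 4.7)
The plan is to exploit the star-shaped structure of $G$ with respect to $p_G$ and pass to spherical coordinates $y = p_G + rs$ with $s \in S^{d-1}$ and $r \in (0, \rho_G(s))$. Points in $M$ correspond to $r \in (r_G, \rho_G(s))$, and for each fixed $s$ the ray $\{p_G + rs : r > 0\}$ meets $\Gamma_j \cap M$ at finitely many distances $t_1(s) < t_2(s) < \dots$, each satisfying $t_i(s) > r_G$. The finiteness of these intersections is guaranteed by $\Gamma_j$ being piecewise affine with finite $(d-1)$-dimensional Hausdorff measure (and is consistent with \eqref{eq:CDEF}). The point $p_G + t_i(s) s$ lies on the open segment $(p_G, p_G + rs)$ precisely when $t_i(s) < r$.

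I would first rewrite the left-hand side in polar coordinates and swap the inner sum with the $r$-integration by Fubini: for fixed $s$,
\[
\int_{r_G}^{\rho_G(s)}\sum_{i:\,t_i(s)<r}\jump{v}^{2}(p_G+t_i(s)s)\,r^{d-1}\,dr
=\sum_{i}\jump{v}^{2}(p_G+t_i(s)s)\,\frac{\rho_G(s)^{d}-t_i(s)^{d}}{d},
\]
which is bounded by $(R_G/2)^{d}/d$ times $\sum_{i}\jump{v}^{2}(p_G+t_i(s)s)$. Next, to convert the remaining $S^{d-1}$-integral into a surface integral over $\Gamma_j \cap M$, I would invoke the radial projection $\pi(\xi) = (\xi - p_G)/|\xi - p_G|$. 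On each affine piece of $\Gamma_j$ with unit normal $\nu_\xi$, the area formula yields Jacobian $|\nu_\xi \cdot (\xi - p_G)|/|\xi - p_G|^{d}$, so
\[
\int_{S^{d-1}}\sum_{i}\jump{v}^{2}(p_G+t_i(s)s)\,dS(s)
=\int_{\Gamma_j\cap M}\frac{|\nu_\xi\cdot(\xi-p_G)|}{|\xi-p_G|^{d}}\,\jump{v}^{2}(\xi)\,d\Gamma_j(\xi).
\]
Since $|\nu_\xi\cdot(\xi-p_G)|\leq|\xi-p_G|$ and $|\xi-p_G|\geq r_G$ on $M$, this Jacobian is at most $r_G^{1-d}$. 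Combining everything with shape regularity \eqref{eq:SHG} gives the stated constant $\gamma^{d-1}R_G/d$, with a factor of $2^d$ to spare.

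The main obstacle is the radial change of variables: the projection $\pi$ degenerates where $\nu_\xi\perp(\xi-p_G)$, but the piecewise affine structure of $\Gamma_j$ together with assumption \eqref{eq:CDEF} ensures that the fibers $\pi^{-1}(s)$ are finite for almost every $s\in S^{d-1}$, so the area formula applied affine-piece by affine-piece legitimizes the computation.
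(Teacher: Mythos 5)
Correct, and essentially the paper's own argument: you pass to spherical coordinates centered at $p_G$, bound the radial integral by $R_G^d/d$ (in fact $(R_G/2)^d/d$), and convert the angular integral into a surface integral over $\Gamma_j\cap\M$, controlling the measure distortion by $|\xi-p_G|\ge r_G$ together with shape regularity \eqref{eq:SHG}, exactly the mechanism used in the paper. The only difference is technical rather than conceptual: you apply the area formula for the radial projection $\xi\mapsto(\xi-p_G)/|\xi-p_G|$ with Jacobian $|\nu_\xi\cdot(\xi-p_G)|/|\xi-p_G|^{d}$ to all of $\Gamma_j\cap\M$ at once (after an exact Fubini in the radial variable), whereas the paper works affine component by affine component, parametrizing each $\Gamma_{j,i}\cap\M$ as a graph over a patch of $\partial B(G,R_G)$ and bounding the corresponding Jacobian from below by $\gamma^{-(d-1)}$.
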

\begin{proof}
Assume $p_G = 0$ without loss of generality and let  $v \in \CK$ with arbitrary $K\geq j > k$. 
As  the interfaces are piecewise affine, $\Gamma_j = \bigcup_{i \in I} \Gamma_{j,i}$
can be represented as  a countable union of its affine components $\Gamma_{j,i}$, $i\in I \subset \N$.
For almost all $y \in \M$, the set  $(0, y) \cap \Gamma_j \cap \M$ is finite and we set
\begin{equation} \label{eq:DECO1}
 \sum_{\xi \in (0,y) \cap \Gamma_j \cap \M} \jump{v}^{2} (\xi) =  \sum_{i \in I}\varphi_i (y)
\end{equation} 
denoting 
\[
\varphi_i(y) =  \jump{v}^2(\xi), \quad \text{if} \quad   (0, y) \cap \Gamma_{j,i} \cap \M = \xi  \in \R^d,
\]
and $\varphi_i(y)=0$, if there is no intersection of $(0,y)$ with $\Gamma_{j,i}$ in $\M$.
We extend $\varphi_i$  by zero to the ball  $B(G, R_G)\supset G \supset \M$.   
This leads to	
\begin{equation} \label{eq:MS}
	\int_\M \varphi_i (y) \, dy 
	= \int_{B(G,R_G)\setminus B(G,r_G)} \varphi_i (y) \, dy  
	= \int_{S^{d-1}} \int_{r_G}^{R_G} \varphi_i (\Psi(r, s)) \, r^{d-1} \, dr \, ds,	
\end{equation}
where $\Psi$ stands for the transformation from  $d$-dimensional spherical to Cartesian coordinates.
We introduce the section $S_i = \lbrace s \in S^{d-1}\;|\; (0, R_G s) \cap \Gamma_{j,i} \cap M \neq \emptyset \rbrace$ 
of  directions that contribute to the integral in \eqref{eq:MS},
and ${\partial B}_i = \lbrace R_G s \;|\;\, s \in S_i \rbrace$ is the corresponding 
subset of the boundary $\partial B(G,R_G)$ of $B(G,R_G)$.
If these sets are empty or if $\Gamma_{j,i}$ is normal to ${\partial B}_i$, i.e.,  ${\partial B}_i$ is a singleton, 
then the integral in \eqref{eq:MS} vanishes.
Otherwise, there is an explicit parametrization $\xi (s) = \Psi(g_i(s)R_G, s)$ of $\Gamma_{j,i}\cap \M$ over ${\partial B}_i$ 
with a smooth function  $g_i: \,{\partial B}_i  \to (0,1]$ and, by definition,
\[
0 \leq \varphi_i (\Psi (r, s)) \leq {\jump{v}^2 \left(\xi(s) \right)}, \qquad  s \in S_i.
\]
Therefore, integration over $r$ and substitution yields
\begin{equation} \label{eq:DECO2}
\int_{S^{d-1}} \int_{r_G}^{R_G} \varphi_i (\Psi(r, s)) \, r^{d - 1}  \, dr \, ds 
\leq  \tfrac{1}{d} \, R_G \int_{S_i} \jump{v}^2 (\xi(s)) R_G^{d-1}\, ds
\leq  \tfrac{1}{d} \, R_G \int_{{\partial B}_i} \jump{v}^2 (\xi(s)) \, ds
\end{equation}
and   $g_i (s)R_G \geq r_G$, $s\in S_i$, together with  shape regularity  $R_G \leq \gamma r_G$ implies
\begin{equation} \label{eq:DECO3}
\int_{{\partial B}_i} \jump{v}^2 (\xi(s)) \, ds 
\leq  \gamma^{d-1} \int_{{\partial B}_i} \jump{v}^2 (\xi(s))  g_i^{d-2} \sqrt{g_i^2 + |\nabla g_i|^2R_G^2} \, ds 
= \gamma^{d-1} \int_{\Gamma_{j,i}\cap \M} \jump{v}^2 \, d \Gamma_{j,i}.
\end{equation}
In light of \eqref{eq:DECO1},  \eqref{eq:MS}, \eqref{eq:DECO2}, and \eqref{eq:DECO3}, summation over $i \in I$ finally leads to
\[\begin{array}{rl}
\displaystyle \int_{\M} \, \sum_{\xi \in (0,y) \cap \Gamma_j \cap \M} \jump{v}^{2} (\xi) \, dy
&  \displaystyle =  \sum_{i \in I} \int_{\M} \varphi_i (y) \, dy \\
&  \displaystyle  \leq    \tfrac{\gamma^{d-1}}{d}  \, R_G \sum_{i \in I}\int_{\Gamma_{j,i}\cap \M} \jump{v}^2 \, d \Gamma_{j,i}
=  \tfrac{\gamma^{d-1}}{d} \,  R_G \int_{\Gamma_j \cap \M} \jump{v}^{2} \, d \Gamma_j .
\end{array}
\]
\end{proof}

The next lemma is an analogue of Lemma 4.1 in~\cite{verfurth1999error}.
\begin{lemma} \label{lem:poincarePatchSplit}
Let $k \in \N$,  $G \in \Omega^{(k)}\setminus \Omega^{(k)}_{\infty}$, and $K>k$. Then 	
\begin{align*}
	\zeroNorm[G]{v}^2
 	& \leq \zeroNorm[B(G,r_G)]{v} + C R_G \zeroNorm[\partial B(G, r_G)]{v}^2   \\
	&  \quad + \left( 1 + {\textstyle \frac{1}{\fc}}\right) C R_G  \left(  R_G \oneSNorm[G \setminus \Gamma^{(K)}]{v}^2  
        +   \sum_{j=k+1}^K \left( 1+\fc \right)^{j-k} C_{k,j}  
	\zeroNorm[{\Gamma_j \cap \left( G \setminus B(G,r_G) \right)}]  {\jump{v}}^2  \right)  
 \end{align*}
holds for all  $ v \in \CK$ with a constant $C$ depending only on the dimension $d$ 
and shape regularity  $\gamma$ of $\Omega^{(k)}$.
\end{lemma}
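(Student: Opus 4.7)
The plan is to split $G$ into the ball $B := B(G,r_G)$ and the annulus $M := G \setminus B$, keep the $B$-part on the left as the first term on the right, and reduce the $M$-integral to a sphere norm on $\partial B$ plus interior gradient and jump contributions via Lemma~\ref{lem:xyDiff}. For each $z \in M$, let $\eta(z) := p_G + r_G \tfrac{z - p_G}{|z - p_G|} \in \partial B$ be the point where the radial segment from $p_G$ through $z$ crosses the inner sphere. Star-shapedness \eqref{eq:STARSHAPED} guarantees that the entire segment from $\eta(z)$ to $z$ lies in $\overline{M}$, and $|z - \eta(z)| \le R_G$. Then $|v(z)|^2 \le 2|v(\eta(z))|^2 + 2|v(z) - v(\eta(z))|^2$, so the task splits into estimating the sphere term $I_1 := \int_M |v(\eta(z))|^2\, dz$ and the difference term $I_2 := \int_M |v(z) - v(\eta(z))|^2\, dz$.

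For $I_1$, I would switch to spherical coordinates $z = p_G + rs$, $s \in S^{d-1}$, $r_G \le r \le \rho_G(s) \le R_G/2$. Then $\eta(z) = p_G + r_G s$ and integrating $r^{d-1}$ over $[r_G,\rho_G(s)]$ yields $I_1 \le \tfrac{R_G^d}{d}\int_{S^{d-1}}|v(p_G+r_Gs)|^2\,ds = \tfrac{R_G^d}{d\,r_G^{d-1}}\zeroNorm[\partial B]{v}^2$, which by \eqref{eq:SHG} is bounded by $\tfrac{\gamma^{d-1}}{d}R_G\,\zeroNorm[\partial B]{v}^2$. For $I_2$, I apply Lemma~\ref{lem:xyDiff} with the pair $(\eta(z), z)$: since $v \in \CK$ and $\Gamma^{(K)}$ is piecewise affine, for almost every $z \in M$ the intersection $(\eta(z),z) \cap \Gamma^{(K)}$ is finite, so the hypothesis holds and gives a gradient contribution plus a jump contribution, both with prefactor $(1 + 1/\fc)$.

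The jump contribution is handled directly by Lemma~\ref{lem:jumpIntegral}: since $(\eta(z),z) \subset (p_G,z) \cap M$, the intermediate-jump sum for each $j$ is dominated by $\sum_{\xi \in (p_G,z)\cap\Gamma_j\cap M}\jump{v}^2(\xi)$, and integrating over $M$ produces $\tfrac{\gamma^{d-1}}{d}R_G \zeroNorm[\Gamma_j \cap M]{\jump{v}}^2$. The gradient contribution, which I expect to be the main technical obstacle, requires some care because the substitution $u = r_G + t(r - r_G)$ seems to introduce a spurious factor $1/(r-r_G)$. The key trick is to absorb only one power of $|z-\eta(z)|$ into an $R_G$ before substituting: write $|z-\eta(z)|^2 \le R_G(r-r_G)$, apply Jensen's inequality $\bigl(\int_0^1|\nabla v|\,dt\bigr)^2 \le \int_0^1|\nabla v|^2\,dt$, and then the substitution turns $(r-r_G)\int_0^1|\nabla v(\eta(z)+t(z-\eta(z)))|^2\,dt$ into the clean radial integral $\int_{r_G}^r|\nabla v(p_G+us)|^2\,du$. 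Swapping the order of the remaining $r$ and $u$ integrations, bounding $\int_u^{\rho_G(s)} r^{d-1}\,dr \le R_G^d/d$, and converting back to Cartesian coordinates via $u^{d-1} \ge r_G^{d-1}$ produces the bound $\tfrac{\gamma^{d-1}}{d}R_G^2 \oneSNorm[M\setminus\Gamma^{(K)}]{v}^2$. Adding the $B$-part $\zeroNorm[B]{v}^2$ and collecting constants yields the claim.
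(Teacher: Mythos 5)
Your proof is correct and follows essentially the same route as the paper: split $G$ into $B(G,r_G)$ and the annulus, compare $v(z)$ with its value at the radial projection onto $\partial B(G,r_G)$ (the paper writes this as $v(rs)-v(r_Gs)+v(r_Gs)$ in spherical coordinates), bound the difference via Lemma~\ref{lem:xyDiff}, the intermediate jumps via Lemma~\ref{lem:jumpIntegral}, and the sphere term by a direct radial computation. The only (cosmetic) deviation is your Jensen-plus-substitution handling of the gradient term, where the paper instead uses the weighted Cauchy--Schwarz estimate from Verf\"urth's Lemma 4.1; both yield the same $C R_G^2 \oneSNorm[G\setminus\Gamma^{(K)}]{v}^2$ bound.
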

\begin{proof}
Utilizing
\begin{equation*}
	\zeroNorm[G]{v}^2 = \zeroNorm[B(G,r_G)]{v}^2 + \zeroNorm[G \setminus B(G,r_G)]{v}^2
\end{equation*}
we have to derive a suitable bound for $\zeroNorm[G \setminus B(G,r_G)]{v}^2$.
We set $\M = G \setminus B(G,r_G)$ for notational convenience and assume $p_G = 0$ without loss of generality. 
Transformation to spherical coordinates then yields the splitting
\begin{align*}
	 \zeroNorm[\M]{v}^2
	&= \int_{S^{d-1}} \int_{r_G}^{\rho_G(s)} r^{d-1} \abs{v(r s)}^2 \, dr \, ds \\
	&=  \int_{S^{d-1}} \int_{r_G}^{\rho_G(s)} r^{d-1} \abs{v(r s) - v(r_G s) + v(r_G s) }^2 \, dr \, ds \\
	&\leq  \underbrace{2 \int_{S^{d-1}}  \int_{r_G}^{\rho_G (s)} r^{d-1} \abs{v(r s) - v( r_G s)}^2 \, dr \, ds }_{:= I_1} 
	+ \underbrace{2 \int_{S^{d-1}}  \int_{r_G}^{\rho_G(s)} r^{d-1} \abs{v(r_G s)}^2 \, dr \, ds }_{:= I_2} .
\end{align*}
We will provide suitable bounds for these two parts and first consider $I_1$.
Lemma \ref{lem:xyDiff} leads to
\begin{equation} \label{eq:L1}
\begin{array}{rl}
	I_1
	\leq & \displaystyle 2  \left( 1 + \tfrac{1}{\fc} \right) \int_{S^{d-1} }\int_{r_G}^{\rho_G(s)} r^{d-1}
	\left( \int_{r_G}^{r}  \abs{\nabla v(z s)} \, dz \right)^2 \, dr \, ds \\
	& \displaystyle   \quad + 2  \left( 1 + \tfrac{1}{\fc} \right) \int_{S^{d-1}} \int_{r_G}^{\rho_G (s)} r^{d-1}
	 \sum_{j=k+1}^{K} \left(1+\fc \right)^{j - k} C_{k,j} \sum_{\xi \in (r_G s, r s) \cap \Gamma_j} \jump{v}^2 (\xi) \, dr \, ds.
	 \end{array}
\end{equation}
By the Cauchy-Schwarz inequality and straightforward computations, 
as in the proof of  \cite[Lemma 4.1]{verfurth1999error},
the gradient term  in \eqref{eq:L1} can be bounded according to
\begin{equation} \label{eq:L1GRAD}
\begin{array}{rl} 
&\displaystyle \int_{S^{d-1} }\int_{r_G}^{\rho_G(s)} r^{d-1} \abs{ \int_{r_G}^{r} \nabla v(z s) \, dz }^2 \, dr \, ds \\
\leq &  \displaystyle 		\int_{S^{d-1}} \left( \int_{r_G}^{\rho_G(s)} z^{d-1} \abs{\nabla v(z s)}^2 \, dz \right) 
		\left( \int_{r_G}^{\rho_G(s)} r^{d-1}  \int_{r_G}^{r} z^{1-d} \, dz \, dr  \right) \, ds \\
 \leq &c R_G^2 \oneSNorm[\M\setminus \Gamma^{(K)}]{v}^2 
\end{array}
\end{equation}
with a constant $c$ depending only on the dimension $d$ and shape regularity $\gamma \geq \frac{R_G}{r_G}$ of $\Omega^{(k)}$.
In order to bound the  jump contributions in \eqref{eq:L1} in terms of integrals along interfaces,
we apply Lemma \ref{lem:jumpIntegral} to obtain
\begin{equation} \label{eq:L1JUMP}
\begin{array}{rl}
	& \displaystyle  \int_{S^{d-1}} \int_{r_G}^{\rho_G (s)} r^{d-1} \sum_{\xi\in (r_G s, r s) \cap \Gamma_j} \jump{v}^2 (\xi) \, dr \, ds 
	    = \int_{\M} \sum_{\xi\in (0, y) \cap \Gamma_j \cap \M} \jump{v}^2 (\xi) \, dy \\
		& \displaystyle \leq   \tfrac{\gamma^{d-1}}{d} \, R_G \int_{\Gamma_j\cap \M} \jump{v}^2  \, d\Gamma_j
		=   \tfrac{\gamma^{d-1}}{d} R_G\zeroNorm[{\Gamma_j\cap \M}]{\jump{v}}^2.
\end{array}
 \end{equation}
 Inserting $M=G \setminus B(G,r_G) \subset G$,  the estimates \eqref{eq:L1GRAD} and \eqref{eq:L1JUMP} provide
\begin{equation} \label{eq:I1BOUND}
	I_1  \leq 
	2 c\left( 1 + \tfrac{1}{\fc} \right) \left(  R_G^2 \oneSNorm[G \setminus \Gamma^{(K)}]{v}^2  + 
		  \tfrac{\gamma^{d-1}}{d} R_G\sum_{j=k+1}^{K} \left( 1+\fc \right)^{j- k} C_{k,j} \zeroNorm[\Gamma_j\cap (G \setminus B(G,r_G))]{\jump{v}}^2\right) .
\end{equation}
 Straightforward calculation leads to
\begin{equation} \label{eq:I2BOUND}
	\begin{array}{rl}
		I_2
		& \displaystyle = 2 \int_{S^{d-1}} \int_{r_G}^{\rho_G(s)} r^{d-1} \abs{ v (r_G s) }^2 \, dr \, ds
	      = 2 \int_{S^{d-1}} r_G^{d-1} \abs{ v (r_G s) }^2 \int_{r_G}^{\rho_G (s)} \left( \tfrac{r}{r_G} \right)^{d-1} \, dr \, ds \\
		& \displaystyle = 2 \int_{S^{d-1}} r_G^{d-1} \abs{ v (r_G s) }^2 \tfrac{r_G}{d} \left( \left( \tfrac{\rho_G(s)}{r_G} \right)^d - 1 \right) \, ds
	     \leq \tfrac{2}{d}  \left( \left( \tfrac{R_G}{r_G} \right)^d - 1 \right)  r_G \zeroNorm[\partial B(G,r_G)]{v}^2 .
	\end{array}
\end{equation}
Together with \eqref{eq:I1BOUND} this concludes the proof.
\end{proof}

As a direct extension  of Lemma~4.3 in~\cite{verfurth1999error},
we are now ready to state a local Poincar\'e inequality on cells $G\in \Omega^{k}\setminus \Omega^{(k)}_{\infty}$.

\begin{proposition} \label{prop:poincarePatch} \ \\
For every $k\in \N$ and every cell $G\in \Omega^{(k)}\setminus \Omega^{(k)}_{\infty}$, the local Poincar\'e inequality 
\begin{equation} \label{eq:POINCG}
	\zeroNorm[G]{v -  \fint_{G}v\; dx}^2
	\leq C \left( 1 + \tfrac{1}{\fc} \right) d_k \left( d_k \oneSNorm[{G \setminus \Gamma}]{v}^2 
	+ \sum_{j=k+1}^{\infty} \left( 1+\fc \right)^{j-k} C_{k,j} \zeroNorm[{\Gamma_j \cap G}]{\jump{v}}^2 \right)
\end{equation} 
holds for all $ v\in \cH$ with a constant $c$  depending only on the dimension $d$ and shape regularity  $\gamma$ of $ \Omega^{(k)}$.
\end{proposition}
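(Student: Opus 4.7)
The plan is to mimic Verf\"urth's argument from \cite{verfurth1999error}, gluing together the three preceding results (Proposition~\ref{lem:poincareBalls}, Lemma~\ref{lem:poincareSpheres}, and Lemma~\ref{lem:poincarePatchSplit}) on the inscribed ball $B = B(G, r_G)$, its boundary $\partial B$, and the annular complement $G \setminus B$, respectively. By the density property of $\bigcup_K \mathcal{C}^1_{K,0}(\Omega)$ in $\cH$ and the continuous dependence of both sides of \eqref{eq:POINCG} on $v$, it suffices to prove the estimate for $v \in \CK$ with arbitrary $K > k$ and to pass $K \to \infty$ at the very end.

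First I would shift $v$ by the constant $\overline{v} := \fint_B v \, dx$. Since gradients and jumps are invariant under this shift and since the $L^2$-best constant approximation of $v$ on $G$ is $\fint_G v\, dx$, we have
\[
\zeroNorm[G]{v - \fint_G v\, dx}^2 \leq \zeroNorm[G]{v - \overline{v}}^2.
\]
Applying Lemma~\ref{lem:poincarePatchSplit} to $v - \overline{v}$ bounds the right-hand side by the sum of $\zeroNorm[B]{v - \overline{v}}^2$, a boundary term $C R_G \zeroNorm[\partial B]{v - \overline{v}}^2$, and the gradient-plus-jump remainder controlled on $G \setminus \Gamma^{(K)}$ and on $\Gamma_j \cap (G \setminus B)$ respectively.

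Next I would invoke Proposition~\ref{lem:poincareBalls} to bound $\zeroNorm[B]{v - \overline{v}}^2$ by $r_G$ times the gradient on $B \setminus \Gamma$ plus the weighted jumps on $\Gamma_j \cap B$, and Lemma~\ref{lem:poincareSpheres} to bound $\zeroNorm[\partial B]{v - \overline{v}}^2$ in the same form (without the leading $r_G$). Multiplying the sphere bound by $R_G$ and adding everything, the jump contributions on $\Gamma_j \cap B$ (from the ball and sphere estimates) combine with the jump contributions on $\Gamma_j \cap (G \setminus B)$ (from the patch-split lemma) to yield the full integral on $\Gamma_j \cap G$. The gradient contributions similarly collect on $G \setminus \Gamma^{(K)}$.

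The final step is a uniformization of the geometric factors. Since $G \in \Omega^{(k)} \setminus \Omega^{(k)}_\infty$ we have $R_G \leq d_k$ by \eqref{eq:NIS}, and shape regularity \eqref{eq:SHG} yields $r_G \geq R_G/\gamma$, so every occurrence of $r_G$ or $R_G$ can be absorbed into $d_k$ at the cost of a constant depending only on $d$ and $\gamma$. Letting $K \to \infty$ then produces the infinite sum on the right-hand side of \eqref{eq:POINCG}, and density extends the inequality to all of $\cH$. The main obstacle I anticipate is bookkeeping: one must make sure that the three domain pieces $B$, $\partial B$, $G \setminus B$ reassemble exactly to $G$ for the jump terms (so that the sums over $\Gamma_j \cap B$ and $\Gamma_j \cap (G \setminus B)$ add up to $\Gamma_j \cap G$ without double counting) and that the weighting factors $(1+\fc)^{j-k} C_{k,j}$ are not inadvertently rescaled when combining the three lemmas.
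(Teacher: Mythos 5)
Your plan coincides with the paper's own proof: the same reduction to $v\in\CK$ by density, the same shift by $\fint_B v\,dx$ using best approximation by constants, and the same assembly of Lemma~\ref{lem:poincarePatchSplit} with Proposition~\ref{lem:poincareBalls} and Lemma~\ref{lem:poincareSpheres}, followed by absorbing $r_G,R_G$ into $d_k$ (note only the upper bounds $r_G\le R_G\le d_k$ are needed there, not $r_G\ge R_G/\gamma$). No gaps; this is essentially the argument given in the paper.
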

\begin{proof}
 It is sufficient to show  \eqref{eq:POINCG} for $v\in \CK$ with arbitrary $K > k$, and then use a density argument.
Observe that  $\fint_G v\;dx$ is minimizing the functional $\zeroNorm[G]{v - {}\cdot{}}^2$.
Denoting $B=B(G,r_G)$, we conclude from Lemma \ref{lem:poincarePatchSplit}
\begin{align*}
	 \zeroNorm[G]{v - \fint_G v\; dx}^2 &\leq \zeroNorm[G]{v - \fint_B v \;dx}^2 \\		  
	  & \leq \zeroNorm[B]{v - \fint_B v \;dx}^2 + C R_G \zeroNorm[\partial B]{v - \fint_B v \;dx}^2 \\
 	   &  +   C R_G  \left( 1 + \tfrac{1}{\fc} \right) \left( R_G \oneSNorm[G \setminus \Gamma^{(K)}]{v}^2
 	 +   \sum_{j=k+1}^K \left( 1+\fc \right)^{j-k} C_{k,j}  
 	    \zeroNorm[{\Gamma_j \cap \left( G \setminus B \right)}]{\jump{v}}^2 \right) .
\end{align*}
Now the assertion follows from the Poincar\'e inequality on balls stated in  Proposition~\ref{lem:poincareBalls}
together with its trace analogue for spheres Lemma~\ref{lem:poincareSpheres}.
\end{proof}

\subsection{A trace lemma} \label{subsec:TRACE}
In order to control the jump contributions in the stability estimates below,
we provide some estimates of traces on the interfaces $\Gamma_j$ of  functions $v \in \CK$ 
with arbitrary $K\in \N$.
For this purpose, we follow the approach by Verf\"urth~\cite{verfurth1999error}
and utilize the triangulations $\cT^{(k)}$ introduced in Subsection~\ref{subsec:FISC}.
The following lemma is a direct extension of \cite[Lemma 3.2]{verfurth1999error} 
and can be shown  along the same lines of proof.
The additionally arising jump contributions are controlled in a similar way as 
in Lemma~\ref{lem:BALLBOUND} and \cite[Theorem 3.6]{heida2017fractal}.
 We refer to \cite{podlesny20} for details.

\begin{lemma} \label{lem:traceLemma} 
Let $k \in \N$, $T\in \cT^{(k)}$, and $E \in \cE^{(k)}$ be a face of $T$. Then 	
\begin{align*}
	\zeroNorm[E]{v}^2
		& \leq   c  \left( 1 + \tfrac{1}{\fc}\right)  \left( h_k^{-1} \zeroNorm[T]{v}^2  + h_{k} \oneSNorm[T \setminus \Gamma^{(K)} ]{v}^2 
		 +  \sum_{j=k+1}^{K} \left( 1+\fc \right)^{j-k} C_{k,j} \zeroNorm[{\Gamma_j \cap T}]{ \jump{v}}^2 \right)
	\end{align*}
holds for all $v \in \CK$ with $K>k$ and a constant $c$ depending only on the space dimension $d$ and
shape regularity $\sigma$ of $\cT^{(k)}$.
\end{lemma}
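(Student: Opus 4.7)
The plan is to adapt Verf\"urth's vector-field proof of the classical trace inequality~\cite[Lemma 3.2]{verfurth1999error} by tracking the additional jump contributions and bounding them through the path-integral / change-of-variables machinery already exploited in Lemma~\ref{lem:BALLBOUND} and~\cite[Theorem 3.6]{heida2017fractal}. By density of smooth functions in $\cH$ (cf.~\eqref{eq:DENSE}) and continuity of both sides in the $\cH$-norm, it suffices to work with $v \in \CK$.

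Let $v^*$ be the vertex of $T$ opposite $E$ and $h_E$ the corresponding height, and introduce the affine vector field $\phi(x) = x - v^*$. Then $\phi \cdot n_E = h_E$ on $E$, while $\phi \cdot n_F = 0$ on every other face $F$ of $T$ because each such $F$ passes through $v^*$. Moreover $\nabla \cdot \phi \equiv d$ and $\left\Vert\phi\right\Vert_\infty \leq h_T$, and shape regularity of $\cT^{(k)}$ gives $h_E \gtrsim h_k$. Since $\cT^{(k)}$ resolves $\Gamma^{(k)}$, the only internal jumps of $v$ inside $T$ lie on $\Gamma_j \cap T$ with $j > k$. Applying the Green's identity~\eqref{eq:WEAK} localized to $T$, with $v$ replaced by $v^2$ and $\varphi$ by $\phi$, then yields
\[
h_E \int_E v^2\, d\sigma \;=\; \int_{T \setminus \Gamma^{(K)}}\!\bigl(2\, v\, \nabla v \cdot \phi + d\, v^2\bigr)\, dx \;\pm\; \sum_{j=k+1}^K \int_{\Gamma_j \cap T}\jump{v^2}\,\phi\cdot\nu_j\, d\Gamma_j .
\]
Cauchy-Schwarz and Young's inequality with weight $h_k$ bound the bulk integral by $c\bigl(\zeroNorm[T]{v}^2 + h_k^2\oneSNorm[T\setminus\Gamma^{(K)}]{v}^2\bigr)$, and division by $h_E \gtrsim h_k$ produces the first two contributions $h_k^{-1}\zeroNorm[T]{v}^2 + h_k\oneSNorm[T\setminus\Gamma^{(K)}]{v}^2$ of the claimed estimate.

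The main obstacle is the jump sum. The identity $\jump{v^2} = (v^+ + v^-)\jump{v}$ followed by Cauchy-Schwarz would call for a trace bound on $\zeroNorm[\Gamma_j \cap T]{v^{\pm}}$, i.e., a recursive version of the very lemma we are trying to prove. To avoid this circularity I would estimate $v^{\pm}(\xi)$ for $\xi \in \Gamma_j \cap T$ directly by the pointwise representation in Lemma~\ref{lem:xyDiff}, applied along segments from an auxiliary reference point $y$ in a fixed interior ball $B \subset T$ with $|B| \gtrsim h_k^d$, and then square and average $y$ over $B$. The $v(y)$-contribution is absorbed into $h_k^{-1}\zeroNorm[T]{v}^2$; the gradient contribution yields $\lesssim h_k\oneSNorm[T\setminus\Gamma^{(K)}]{v}^2$ after the change of variables from the classical Verf\"urth proof; and the intermediate-jump contributions, indexed by $k < j' \leq K$ with $j' \neq j$, are handled by the same $(\xi,y)\mapsto (\xi,\eta=y-\xi)$ transformation used in Lemma~\ref{lem:BALLBOUND} together with assumption~\eqref{eq:CDEF}, producing $\lesssim (1+\fc)^{j'-k} C_{k,j'}\zeroNorm[\Gamma_{j'} \cap T]{\jump{v}}^2$ for each intermediate level. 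Inserting the resulting bound on $\zeroNorm[\Gamma_j \cap T]{v^{\pm}}^2$ back into the jump sum, multiplying by the outer weight $(1+\fc)^{j-k}C_{k,j}$, and reorganizing the double sum in $(j,j')$ by means of the self-similarity assumption~\eqref{eq:RKSUP}, one obtains the claimed $\sum_{j=k+1}^K (1+\fc)^{j-k}C_{k,j}\zeroNorm[\Gamma_j \cap T]{\jump{v}}^2$ contribution, with the overall prefactor $(1+\tfrac{1}{\fc})$ emerging naturally from the applications of Lemma~\ref{lem:xyDiff}.
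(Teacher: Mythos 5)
Your divergence-theorem route (testing $v^2\phi$ with $\phi(x)=x-v^*$) is not what the paper intends, and the place where it departs is exactly where your argument has a genuine gap. The paper's proof (only sketched there, with details in the cited thesis) extends Verf\"urth's Lemma 3.2 in its original form: one writes, for a point of $E$ and points of $T$ along the rays through the vertex opposite $E$, the difference of $v$ (not of $v^2$) via Lemma~\ref{lem:xyDiff}, integrates in the ray parametrization, and transports the intermediate-jump terms to surface integrals exactly as in Lemma~\ref{lem:jumpIntegral}/Lemma~\ref{lem:BALLBOUND}. In that argument the jumps of $v$ enter quadratically and linearly in the sum over $j$, already carrying the weights $(1+\fc)^{j-k}C_{k,j}$, so no one-sided trace of $v$ on $\Gamma_j\cap T$ is ever needed and no circularity arises. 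Your version instead produces $\jump{v^2}=(v^++v^-)\jump{v}$ on the internal interfaces, and the repair you outline is incomplete in three concrete respects. First, bounding $\zeroNorm[\Gamma_j\cap T]{v^{\pm}}$ by averaging segments from a ball $B\subset T$ requires a ball-to-surface analogue of Lemma~\ref{lem:BALLBOUND}: the integration variable $\xi$ now runs over the $(d-1)$-dimensional set $\Gamma_j\cap T$, so the change of variables $(y,\xi)\mapsto(\xi,\eta)$ used there does not apply as stated and a new estimate (with its constant) would have to be proved. Second, the $v(y)$-contribution gives $|\Gamma_j\cap T|\,h_k^{-d}\zeroNorm[T]{v}^2$, so to absorb it into $h_k^{-1}\zeroNorm[T]{v}^2$ with the claimed weights you need a surface-measure bound of the type $|\Gamma_j\cap T|\lesssim C_{k,j}h_k^{d-1}$, which is nowhere established in the paper (assumption \eqref{eq:CDEF} only controls intersections with lines). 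Third, reassembling the double sum in $(j,j')$ is not a matter of the self-similarity assumption \eqref{eq:RKSUP}: what is actually needed is a $j$-dependent Young weight (roughly $\varepsilon_j\sim(1+\fc)^{-(j-k)}C_{k,j}^{-1}$) so that the series $\sum_j(1+\fc)^{-(j-k)}$ closes the estimate; invoking \eqref{eq:RKSUP} instead is vague and would make the constant depend on $C_0$ (and more strongly on $\fc$), whereas the lemma asserts dependence only on $d$ and the shape regularity $\sigma$, with a single factor $\bigl(1+\tfrac1\fc\bigr)$.

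In short, your identity for $h_E\int_E v^2$ and the treatment of the bulk terms are fine, but the jump block of your proof is a sketch resting on auxiliary estimates that are neither in the paper nor supplied by you, and the circularity you correctly identify is best avoided by not squaring $v$ at all: apply Lemma~\ref{lem:xyDiff} directly between points of $E$ and points of $T$ in Verf\"urth's ray parametrization, and handle the intermediate jumps as in Lemma~\ref{lem:jumpIntegral}; this yields the stated bound with the correct weights and constant in one pass.
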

Now we are ready to state the desired trace lemma.
\begin{lemma} \label{cor:traceCorollary} \ \\
Let $k \in \N$ and $G  \in \Omega^{(k)}\setminus \Omega^{(k)}_{\infty}$ and $l=1,\dots,k$. Then
\[
	\zeroNorm[\Gamma_l \cap \partial G] {v}^2
	 \leq C \left( 1 + \tfrac{1}{\fc}\right)  \left(d_k^{-1} \zeroNorm[G]{v}^2 + d_k \oneSNorm[G \setminus \Gamma^{(K)}]{v}^2 
	+ \sum_{j=k+1}^{K} \left( 1+\fc \right)^{j - k} C_{k,j} \zeroNorm[\Gamma_j \cap G]{ \jump{v}}^2 \right)
\]
holds for all $v \in \cH_K$ with $K>k$ and a constant $C$ depending only on the space dimension~$d$,
shape regularity $\sigma$ of $\cT^{(k)}$ and the constant $\delta$ in \eqref{eq:HKDK}.
\end{lemma}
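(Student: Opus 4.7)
The plan is to derive the estimate by decomposing $\Gamma_l\cap\partial G$ into the faces provided by the resolving triangulation $\cT^{(k)}$, applying the elementwise trace estimate Lemma~\ref{lem:traceLemma} on each such face, and then summing. More precisely, since $l\leq k$ and $\cT^{(k)}$ resolves $\Gamma^{(k)}$ by \eqref{eq:GAMMARES}, the set $\Gamma_l\cap\partial G$ can be written as a disjoint union $\bigcup_{E\in\cE_{G,l}^{(k)}} E$ of faces $E\in\cE^{(k)}$, where each $E$ is a face of a unique simplex $T_E\in\cT_G^{(k)}$ (since $E\subset\partial G$ lies on the boundary, only the simplex inside $G$ is used).

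I would then apply Lemma~\ref{lem:traceLemma} on each pair $(T_E,E)$. This directly yields
\begin{equation*}
\zeroNorm[\Gamma_l\cap\partial G]{v}^2 = \sum_{E\in\cE_{G,l}^{(k)}}\zeroNorm[E]{v}^2
\leq c\bigl(1+\tfrac{1}{\fc}\bigr)\sum_{E}\Bigl( h_k^{-1}\zeroNorm[T_E]{v}^2 + h_k\oneSNorm[T_E\setminus\Gamma^{(K)}]{v}^2
+ \sum_{j=k+1}^{K}(1+\fc)^{j-k}C_{k,j}\zeroNorm[\Gamma_j\cap T_E]{\jump{v}}^2\Bigr).
\end{equation*}
Since a simplex $T\in\cT_G^{(k)}$ has at most $d+1$ faces, each $T$ appears at most $d+1$ times in this sum, so by the disjointness of the simplices in $\cT_G^{(k)}$ and the inclusion $\bigcup_{T\in\cT_G^{(k)}} T=\overline{G}$, the sums $\sum_{E}\zeroNorm[T_E]{\cdot}^2$, $\sum_{E}\oneSNorm[T_E\setminus\Gamma^{(K)}]{\cdot}^2$, and $\sum_{E}\zeroNorm[\Gamma_j\cap T_E]{\cdot}^2$ are controlled by the corresponding global quantities on $G$, $G\setminus\Gamma^{(K)}$, and $\Gamma_j\cap G$, respectively, up to the shape-regularity-dependent multiplicity constant.

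Finally, I would convert $h_k$ to $d_k$ using \eqref{eq:HKDK}, which gives $h_k^{-1}\leq \delta^{-1} d_k^{-1}$ and $h_k\leq d_k$, absorbing the factor $\delta^{-1}$ into the constant $C$. Collecting everything produces precisely the claimed bound, with the constant depending only on $d$, on the shape regularity $\sigma$ of $\cT^{(k)}$ (through the constant in Lemma~\ref{lem:traceLemma} and the bound $d+1$ on faces per simplex), and on $\delta$ from \eqref{eq:HKDK}.

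I do not expect any serious obstacle here: the single nontrivial ingredient is Lemma~\ref{lem:traceLemma}, which is already established. The only care needed is bookkeeping, namely checking that (i) the resolving assumption \eqref{eq:GAMMARES} genuinely applies for $l\leq k$ so that $\Gamma_l\cap\partial G$ is exactly a union of $\cT^{(k)}$-faces, (ii) faces on $\partial G$ are associated with only one simplex inside $G$ so no double-counting across neighboring cells occurs, and (iii) the transition from $h_k$ to $d_k$ via \eqref{eq:HKDK} produces a constant independent of $k$. Provided these three points are observed, the estimate follows by a single summation.
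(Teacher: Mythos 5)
Your proposal matches the paper's own proof essentially step for step: decompose $\Gamma_l\cap\partial G$ into faces of the resolving triangulation $\cT^{(k)}$, apply Lemma~\ref{lem:traceLemma} elementwise, use the multiplicity bound $d+1$ on faces per simplex to sum over $\cT_G^{(k)}$, and convert $h_k$ to $d_k$ via \eqref{eq:HKDK}. The only detail the paper includes that you omit is the initial density argument reducing $v\in\cH_K$ to $v\in\CK$ (needed because Lemma~\ref{lem:traceLemma} is stated for smooth functions); with that one sentence added, your argument is complete.
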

\begin{proof}
By a density argument, it is sufficient to consider $v \in \CK$.
Let $G \in  \Omega^{(k)}\setminus \Omega^{(k)}_{\infty}$ and recall that $\cT^{(k)}_G \subset \cT^{(k)}$ 
is a local partition of $\cE_G^{(k)}\subset \cE^{(k)}$.
Denoting the set  faces of simplices $T\in \cT^{(k)}_G$ by $\cE^{(k)}_G$, 
select the subset  of faces $\cE^{(k)}_{\partial G} \subset \cE^{(k)}_G$
such that
\[
\partial G = \bigcup_{E \in \cE^{(k)}_{\partial G}} E.
\]
Note that for each $E \in \cE^{(k)}_{\partial G}$ there is a simplex $T_E\in \cT^{(k)}_G$ with face $E$ and
a simplex $T\in \cT^{(k)}_G$  can contribute at most all of its $d+1$ faces to $\cE^{(k)}_{\partial G}$.
Utilizing the  trace Lemma~\ref{lem:traceLemma}  and \eqref{eq:HKDK}, we get
\begin{align*}
		& \zeroNorm[\Gamma_l \cap \partial G]{v}^2
		\leq \sum_{E \in \cE^{(k)}_{\partial G}} \zeroNorm[E]{v}^2 \\ 
		& \leq c \left( 1 + \tfrac{1}{\fc}\right)  \sum_{E \in \cE^{(k)}_{\partial G}}
		\left(h_k^{-1}  \zeroNorm[T_E]{v}^2 + h_k \oneSNorm[T_E  \setminus \Gamma^{(K)}]{v}^2 +  \sum_{j=k+1}^K \left( 1+\fc \right)^{j - k} C_{k,j} \zeroNorm[\Gamma_j \cap T_E]{ \jump{v}}^2 \right) \\
		& \leq c (d+1)   \left( 1 + \tfrac{1}{\fc}\right)  \sum_{T \in \cT^{(k)}_G}
		\left( h_k^{-1} \zeroNorm[T]{v}^2 + h_k \oneSNorm[T \setminus \Gamma^{(K)}]{v}^2 + \sum_{j=k+1}^K \left( 1+\fc \right)^{j - k} C_{k,j} \zeroNorm[\Gamma_j \cap T]{ \jump{v}}^2 \right) \\
		& \leq C  \left( 1 + \tfrac{1}{\fc}\right) 
		\left( d_k^{-1} \zeroNorm[G]{v}^2 + d_k \oneSNorm[G \setminus \Gamma^{(K)}]{v}^2 +  \sum_{j=k+1}^K \left( 1+\fc \right)^{j - k} C_{k,j} \zeroNorm[\Gamma_j \cap G]{ \jump{v}}^2 \right) 
\end{align*}
with a constant $c$ depending only on the space dimension $d$,
shape regularity $\sigma$ of $\cT^{(k)}$, and the constant $\delta$  in \eqref{eq:HKDK}.
\end{proof}

\subsection{Projections on finite-scale spaces $\cH_k$}
\begin{definition}
For every $k\in \N$, we define the linear projection $\PHk: \cH \to \cH_k$ by setting
\begin{equation} \label{eq:PIHKDEF}
 \PHk v|_G = \left\{
    \begin{array}{ll}
        \underset{v_k\in H^1(G)}{\text{ \rm arg min}}\{|v-v_k|_{1,G\setminus \Gamma}\;|\; \int_G v-v_k\; dx = 0\}, & G \in \Omega^{(k)}\setminus \Omega_{\infty}^{(k)}\\
        v|_G, & G \in \Omega_{\infty}^{(k)}
    \end{array}
    \right .
    \qquad 
\end{equation}
for all $G \in \Omega^{(k)}$ and $v\in \cH$.
\end{definition}
The operator  $\PHk$ is well-defined. Indeed, for every $G\in \Omega^{(k)}\setminus \Omega_{\infty}^{(k)}$ 
its local contribution $v_k$ is the unique solution of a quadratic minimization problem on the affine space 
$\fint_Gv\; dx + W$, $W=\{w \in H^1(G)\;|\; \int_Gw\; dx = 0\}$, which is characterized by the variational equality
\begin{equation} \label{eq:VAREQ}
 (\nabla v_k,\nabla w)=(\nabla v, \nabla w) \qquad \forall w \in W.
\end{equation}

\begin{lemma}\label{lem:LOCSTAB}
 For every $k\in \N$ the linear projection $\PHk$ satisfies
 \begin{equation} \label{eq:MEANLOCSTAB}
 \fint_G v - \PHk v \; dx = 0 \quad \text{and}\quad 
  \oneSNorm[G]{\PHk v}\leq \oneSNorm[G\setminus \Gamma]{v}\qquad \forall v\in \cH.
 \end{equation}
\end{lemma}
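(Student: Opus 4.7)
The plan is to split the proof according to the two cases in the definition of $\PHk$ and verify both claimed properties in each.

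For invariant cells $G\in \Omega^{(k)}_\infty$ everything is immediate. By definition $\PHk v|_G = v|_G$, so the mean-value identity is trivial. Moreover, since $G$ is a cell of every partition $\Omega^{(j)}$, $j\ge k$, no interface $\Gamma_j$ intersects the interior of $G$. In particular $v|_G$ has no jumps, so $\oneSNorm[G]{\PHk v}^2 = \int_G |\nabla v|^2\,dx = \oneSNorm[G\setminus\Gamma]{v}^2$.

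For a non-invariant cell $G\in \Omega^{(k)}\setminus \Omega^{(k)}_\infty$, denote $v_k = \PHk v|_G$. The mean-zero property is built into the admissible set of the minimization problem in \eqref{eq:PIHKDEF}, hence $\int_G (v-v_k)\,dx=0$. For the seminorm estimate I would use the Euler--Lagrange characterization \eqref{eq:VAREQ}, i.e.\ $(\nabla v_k,\nabla w)_{L^2(G)} = (\nabla v,\nabla w)_{L^2(G\setminus\Gamma)}$ for every $w\in W = \{w\in H^1(G):\int_G w\,dx=0\}$. The key observation is that $\tilde v_k := v_k - \fint_G v_k\,dx$ lies in $W$ and shares the gradient of $v_k$. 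Testing the variational equality with $w=\tilde v_k$ gives
\[
\oneSNorm[G]{v_k}^2 = (\nabla v_k,\nabla v_k)_{L^2(G)} = (\nabla v,\nabla v_k)_{L^2(G\setminus \Gamma)},
\]
and a single application of the Cauchy--Schwarz inequality (using that $\Gamma$ has $d$-dimensional Lebesgue measure zero so $\nabla v_k$ can be integrated against $\nabla v$ on either $G$ or $G\setminus\Gamma$) yields $\oneSNorm[G]{v_k}^2 \le \oneSNorm[G\setminus\Gamma]{v}\,\oneSNorm[G]{v_k}$. Dividing by $\oneSNorm[G]{v_k}$ (trivial if the seminorm is zero) gives the claim.

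I do not anticipate a real obstacle: the essence of the argument is the standard fact that a Dirichlet-energy minimizer in an affine subspace does not increase the seminorm of its argument. The only point requiring a sentence of care is that $\nabla v$ is the \emph{broken} gradient living in $L^2(G\setminus\Gamma)$, whereas $\nabla v_k$ is the classical gradient on $G$; since $|\Gamma|_d = 0$ this mismatch is harmless in the integrals above.
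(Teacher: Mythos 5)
Your proposal is correct and follows essentially the same route as the paper: the mean-value identity comes directly from the constraint in \eqref{eq:PIHKDEF}, and the seminorm bound from testing \eqref{eq:VAREQ} with the mean-zero shift of $v_k$ (your $w=v_k-\fint_G v_k\,dx$ coincides with the paper's $w=v_k-\fint_G v\,dx$ because of the mean-value constraint) followed by Cauchy--Schwarz. Your explicit treatment of the invariant cells $G\in\Omega^{(k)}_{\infty}$ and the remark on the broken versus classical gradient are details the paper leaves implicit, but they do not change the argument.
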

\begin{proof}
 Setting $v_k=\PHk v|_G$, the first equality follows by definition \eqref{eq:PIHKDEF} 
 and after testing with $w= v_k - \fint_Gv\; dx$ in \eqref{eq:VAREQ},
 the remaining local stability of $\PHk$ follows from the Cauchy-Schwarz inequality.
\end{proof}

We now state an approximation property of the projections $\PHk v$, $k \in \N$.
\begin{theorem} \label{thm:HKAPPROX}
Assume that the condition
\begin{equation} \label{eq:SMALLCL}
 r_k(1+\fc)^{-k}\leq d_k
\end{equation}
on the geometry of the interface network $\Gamma$ is satisfied.
Then the projections $\PHk: \cH \to \cH_k$, $k \in \N$,  have the approximation property
\begin{equation}
\zeroNorm{v - \PHk v}^2  \leq  c \left( 1 + \tfrac{1}{\fc} \right) d^2_k  \HNorm{v}^2\qquad \forall v\in \cH
\end{equation}
with a constant $c$ depending only on the space dimension $d$ and shape regularity $\gamma$ of $ \Omega^{(k)}$.
\end{theorem}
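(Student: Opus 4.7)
My plan is to prove the estimate cell by cell, using the local Poincaré inequality of Proposition \ref{prop:poincarePatch} on each non-invariant cell and the fact that the projection is the identity on invariant cells.

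\textbf{Cellwise reduction.} Since $\Pi_{\cH_k}v|_G = v|_G$ for $G \in \Omega^{(k)}_\infty$, the error vanishes on the union of invariant cells, and
\[
\zeroNorm{v - \PHk v}^2 = \sum_{G \in \Omega^{(k)} \setminus \Omega^{(k)}_\infty} \zeroNorm[G]{v - \PHk v}^2 .
\]
For each non-invariant cell $G$, Lemma \ref{lem:LOCSTAB} gives $\fint_G (v - \PHk v)\, dx = 0$, so Proposition \ref{prop:poincarePatch} applies to the difference $v - \PHk v \in \cH$, yielding
\[
\zeroNorm[G]{v - \PHk v}^2 \leq C\!\left(1 + \tfrac{1}{\fc}\right) d_k \!\left( d_k \oneSNorm[G \setminus \Gamma]{v - \PHk v}^2 + \sum_{j=k+1}^\infty (1+\fc)^{j-k} C_{k,j} \zeroNorm[\Gamma_j \cap G]{\jump{v - \PHk v}}^2 \right).
\]

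\textbf{Bounding the two local terms.} For the gradient part, the triangle inequality together with the local stability in Lemma \ref{lem:LOCSTAB} gives $\oneSNorm[G \setminus \Gamma]{v - \PHk v} \leq 2\, \oneSNorm[G \setminus \Gamma]{v}$. For the jump part, the crucial observation is that $\PHk v|_G \in H^1(G)$ has no jumps across any interface $\Gamma_j$ with $j > k$ lying inside $G$, so $\jump{v - \PHk v} = \jump{v}$ on $\Gamma_j \cap G$ for every $j > k$. Inserting both bounds and summing over the non-invariant cells (noting that $\Gamma_j \cap G = \emptyset$ for $G \in \Omega^{(k)}_\infty$ and $j > k$), we obtain
\[
\zeroNorm{v - \PHk v}^2 \leq C\!\left(1 + \tfrac{1}{\fc}\right)\!\left( 4 d_k^2 \oneSNorm[\Omega \setminus \Gamma]{v}^2 + d_k \sum_{j=k+1}^\infty (1+\fc)^{j-k} C_{k,j} \zeroNorm[\Gamma_j]{\jump{v}}^2 \right).
\]

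\textbf{Using the geometric hypothesis.} The main work is in matching the jump sum to the $\cH$-norm. Rewriting
\[
(1+\fc)^{j-k} C_{k,j} = (1+\fc)^{-k}\, (1+\fc)^j C_j \cdot \frac{C_{k,j}}{C_{1,j}} \leq r_k (1+\fc)^{-k} (1+\fc)^j C_j
\]
by the definition \eqref{eq:RK} of $r_k$, and using the assumption \eqref{eq:SMALLCL} that $r_k (1+\fc)^{-k} \leq d_k$, the jump contribution is bounded by $d_k^2 \sum_{j=k+1}^\infty (1+\fc)^j C_j \zeroNorm[\Gamma_j]{\jump{v}}^2 \leq d_k^2 \HNorm{v}^2$. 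Combining with the gradient contribution and $\oneSNorm[\Omega \setminus \Gamma]{v}^2 \leq \HNorm{v}^2$ concludes the proof.

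\textbf{Expected obstacle.} The routine steps are the Poincaré application and the triangle inequality; the one place where something nontrivial happens is the jump sum, and this is precisely where the geometric hypothesis \eqref{eq:SMALLCL} enters. The observation that makes the argument work is that $\PHk v$ is locally $H^1$ on each cell, so it contributes \emph{nothing} to the jumps on finer interfaces $\Gamma_j$, $j > k$, and the scaling $(1+\fc)^{j-k} C_{k,j}$ can then be absorbed into the $\cH$-norm weight $(1+\fc)^j C_j$ at the price of the factor $r_k (1+\fc)^{-k}$, which \eqref{eq:SMALLCL} is tailored to control.
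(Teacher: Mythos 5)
Your argument is correct and follows essentially the same route as the paper: reduce to non-invariant cells, apply the local Poincar\'e inequality of Proposition~\ref{prop:poincarePatch} to the mean-zero difference $v-\PHk v$, use the local stability of Lemma~\ref{lem:LOCSTAB} for the gradient part, note that $\PHk v$ has no jumps across $\Gamma_j$ for $j>k$, and absorb the weights $(1+\fc)^{j-k}C_{k,j}$ into the $\cH$-norm via \eqref{eq:RK} and \eqref{eq:SMALLCL} before summing over cells. This is precisely the paper's proof, up to inessential constants.
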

\begin{proof}
Let $G \in  \Omega^{(k)}\setminus \Omega_{\infty}^{(k)}$ and $v\in \cH$.
As $v - \PHk v$  has mean-value zero and $\PHk v$ does not jump across $\Gamma_l$ for $l \geq k+1$,
the local Poincar\'e inequality stated in  Proposition~\ref{prop:poincarePatch}  yields 
\begin{equation} \label{eq:POINAPPL}
\zeroNorm[G]{v - \PHk v}^2 \leq 
c\left(1+ \tfrac{1}{\fc}\right) d_k
\left( 
d_k \oneSNorm[G \setminus \Gamma]{v - \PHk v}^2  + \sum_{j=k+1}^{\infty} (1+ \fc)^{j-k} C_{k,j}\zeroNorm[\Gamma_j\cap G]{\jump{v}}^2
\right)
\end{equation}
with a constant $c$  depending only on the dimension $d$ and shape regularity  $\gamma$ of $\Omega^{(k)}$.
Assumption \eqref{eq:SMALLCL} and the definition \eqref{eq:RK} of $r_k$ imply
\begin{equation} \label{eq:GEOGA}
(1+\fc)^{-k}C_{k,j}\leq r_k(1+\fc)^{-k}C_j\leq d_k C_j.
\end{equation}
Now we insert these estimates into \eqref{eq:POINAPPL} 
and make use of the Cauchy-Schwarz inequality and of  the local stability \eqref{eq:MEANLOCSTAB} to obtain
\[
\zeroNorm[G]{v - \PHk v}^2 \leq 
c\left(1+ \tfrac{1}{\fc}\right) d_k^2
\left( 
 2\oneSNorm[G \setminus \Gamma]{v}^2  + \sum_{j=k+1}^{\infty} (1+ \fc)^j C_j\zeroNorm[\Gamma_j \cap G]{\jump{v}}^2
\right).
\]
As $\zeroNorm[G]{v - \PHk v}= 0$ for all $G\in \Omega^{(k)}_{\infty}$, 
summation over $G\in \Omega^{(k)}\setminus  \Omega_{\infty}^{(k)}$ completes the proof.
\end{proof}

For each fixed $k \in \N$ boundedness
\begin{equation} \label{eq:CONTP}
\HNorm{\PHk v } \leq \mu_k \HNorm{v} \qquad \forall v \in \cH
\end{equation}
of $\PHk$ holds with a constant $\mu_k$ 
as a consequence of the closed graph theorem~\cite[????]{podlesny20}.
In order to identify sufficient conditions for uniform stability of $\PHk$,
we want to further clarify the dependence  of $\mu_k$ on $k\in \N$.
To this end, the following lemma provides a bound for the jump contributions to $\HNorm{\PHk v}$ in terms of $\HNorm{v}$. 
\begin{lemma} \label{lem:cstabHelp} 
Let $k \in \N$,  $G \in \Omega^{(k)}\setminus \Omega^{(k)}_{\infty}$ and
assume that conditions \eqref{eq:HKDK} and \eqref{eq:SMALLCL} are satisfied. 
Then
	\begin{equation*}
		\sum\limits_{l=1}^{k} (1+\fc)^l C_l \zeroNorm[{\Gamma_l}]{\jump{v - \PHk v}}^2
		\leq C\left( 1 + \tfrac{1}{\fc}\right)^2  d_k \left(\sum\limits_{l=1}^{k} (1+\fc)^l C_l \right)  \HNorm{v}^2.
\end{equation*}
holds for all $v\in \CK$ with $K>k$ and
a constant $C$ depending only on the space dimension~$d$, shape regularity $\sigma$ of $\cT^{(k)}$, 
and the constant $\delta$ in \eqref{eq:HKDK}.
\end{lemma}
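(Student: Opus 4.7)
I will write $w := v - \PHk v$ and argue cell by cell on $\Omega^{(k)}$. For $G \in \Omega^{(k)}_\infty$ the definition \eqref{eq:PIHKDEF} forces $w|_G \equiv 0$, so these cells contribute nothing. For $G \in \Omega^{(k)}\setminus \Omega^{(k)}_\infty$, on the other hand, $\PHk v|_G \in H^1(G)$ is single-valued, and hence for every $l \leq k$ the jump $\jump w$ on $\Gamma_l$ equals the difference of the two one-sided boundary traces of $w$ from the adjacent cells. This lets me reduce the global jump integral to local one-sided traces via
\[
\zeroNorm[\Gamma_l]{\jump w}^2 \leq 2 \sum_{G \in \Omega^{(k)}\setminus \Omega^{(k)}_\infty} \zeroNorm[\Gamma_l \cap \partial G]{w|_G}^2.
\]

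On a fixed $G \in \Omega^{(k)}\setminus \Omega^{(k)}_\infty$ I apply Corollary~\ref{cor:traceCorollary} to $w$, bounding $\zeroNorm[\Gamma_l \cap \partial G]{w}^2$ in terms of $d_k^{-1}\zeroNorm[G]{w}^2$, $d_k\oneSNorm[G \setminus \Gamma^{(K)}]{w}^2$, and the higher-level jump norms $\zeroNorm[\Gamma_j \cap G]{\jump w}^2$ with weights $(1+\fc)^{j-k}C_{k,j}$ for $k < j \leq K$. Because $\fint_G w\,dx = 0$ by construction of $\PHk$, Proposition~\ref{prop:poincarePatch} then controls $d_k^{-1}\zeroNorm[G]{w}^2$ by the same gradient and jump quantities up to an extra factor $d_k$, cancelling the $d_k^{-1}$ and leaving a single power of $d_k$ in front of $\oneSNorm[G\setminus\Gamma]{w}^2$. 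The local stability Lemma~\ref{lem:LOCSTAB} replaces $\oneSNorm[G\setminus \Gamma]{w}$ by $2\,\oneSNorm[G\setminus \Gamma]{v}$, and the $H^1$-regularity of $\PHk v$ on $G$ gives $\jump w = \jump v$ on every $\Gamma_j \cap G$ with $j > k$. Collecting these substitutions yields
\[
\zeroNorm[\Gamma_l \cap \partial G]{w}^2 \leq C (1+\tfrac{1}{\fc})^2 \left( d_k \oneSNorm[G\setminus\Gamma]{v}^2 + \sum_{j=k+1}^{\infty} (1+\fc)^{j-k} C_{k,j} \zeroNorm[\Gamma_j \cap G]{\jump v}^2 \right).
\]

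To close the estimate I invoke the self-similarity hypothesis \eqref{eq:SMALLCL}: combined with $C_{k,j} \leq r_k C_j$ coming from \eqref{eq:RK}, it yields $(1+\fc)^{j-k} C_{k,j} \leq d_k (1+\fc)^j C_j$ for every $j > k$. This extracts the missing factor $d_k$ from every higher-level jump term and turns the bracket above into the $G$-local contribution to $\HNorm{v}^2$. Multiplying by $(1+\fc)^l C_l$, summing $l = 1,\dots,k$, and then summing the cellwise inequalities over $G \in \Omega^{(k)}\setminus \Omega^{(k)}_\infty$ (observing that for $j > k$ the sets $\Gamma_j \cap G$ partition $\Gamma_j$, so there is no over-counting) reassembles the global norm $\HNorm{v}^2$ on the right and produces exactly the claimed bound. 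The principal technical obstacle lies in the bookkeeping of the single power of $d_k$ on the right-hand side, which is produced through two independent mechanisms: the gradient contribution inherits it from pairing Corollary~\ref{cor:traceCorollary} with Proposition~\ref{prop:poincarePatch}, whereas the higher-level jump contributions acquire their $d_k$ only through \eqref{eq:SMALLCL}. Without this geometric hypothesis those jump terms cannot be absorbed into $\HNorm{v}^2$ with the correct scaling, and the uniform scale-independent stability of $\PHk$ would be lost.
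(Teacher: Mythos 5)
Your argument is correct and follows the paper's own proof essentially step for step: you reduce the jump on each $\Gamma_l$ to one-sided cell-boundary traces, apply the trace Lemma~\ref{cor:traceCorollary} to $v-\PHk v$, absorb the term $d_k^{-1}\zeroNorm[G]{v-\PHk v}^2$ via the mean-zero local Poincar\'e inequality of Proposition~\ref{prop:poincarePatch} together with the local stability Lemma~\ref{lem:LOCSTAB} and the fact that $\jump{\PHk v}=0$ on $\Gamma_j$ for $j>k$, and then use \eqref{eq:RK} and \eqref{eq:SMALLCL} to turn the weights $(1+\fc)^{j-k}C_{k,j}$ into $d_k(1+\fc)^{j}C_j$ before summing over cells and over $l=1,\dots,k$. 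The only deviations are cosmetic (a factor $2$ in the trace splitting where the paper uses $4$, restricting explicitly to non-invariant cells, and invoking \eqref{eq:SMALLCL} at the end rather than inside the Poincar\'e step), so the proposal matches the paper's argument.
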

\begin{proof}
Let $k \in \N$  and $v\in \CK$ with $K>k$. Note that 
\[
\begin{array}{rcl}
	 \displaystyle \zeroNorm[\Gamma_l]{\jump{v - \PHk v}}^2
		& = &  \displaystyle\sum_{\substack{G, G'  \in \Omega^{(k)} \\ G \neq G'}} \int_{\Gamma_l \cap \partial G \cap \partial G'} 
		\left( (v - \PHk v)\vert_G - (v- \PHk v)\vert_{G'} \right)^2 \, d\Gamma_l \\
		& \leq & \displaystyle 4 \sum \limits_{G \in \Omega^{(k)} } \zeroNorm[\Gamma_l \cap \partial G]{v - \PHk v}^2 .
\end{array}
\]
holds for $ l =1,\dots, k$.
Inserting \eqref{eq:GEOGA} (a consequence of assumption \eqref{eq:SMALLCL})
into the local  approximation property~\eqref{eq:POINAPPL},  we get 
\begin{equation} \label{eq:LOCAPPASS}
\zeroNorm[G]{v - \PHk v}^2 \leq 
c\left(1+ \tfrac{1}{\fc}\right) d_k^2
\left( 
\oneSNorm[G \setminus \Gamma^{(K)}]{v - \PHk v}^2  
+ \sum_{j=k+1}^{K} (1+ \fc)^j C_j \zeroNorm[\Gamma_j \cap G]{\jump{v}}^2 \right) .
\end{equation}
As $\jump{\PHk v} = 0 $ on $\Gamma_j$ for $j > k$, application of the trace Lemma~\ref{cor:traceCorollary}, 
together with  \eqref{eq:LOCAPPASS}, Lemma~\ref{lem:LOCSTAB},  and \eqref{eq:GEOGA} lead to
\begin{align*}
	& \zeroNorm[\Gamma_l \cap \partial G]{v- \PHk v}^2 \\
	\leq &  c' \left( 1 + \tfrac{1}{\fc}\right)  \left( d_k^{-1} \zeroNorm[G]{v-\PHk v}^2
	+ d_{k} \oneSNorm[G \setminus \Gamma^{(K)}]{v - \PHk v}^2 +  \sum_{j=k+1}^{K} \left( 1+\fc \right)^{s-k} C_{k,j} \zeroNorm[\Gamma_j\cap G]{ \jump{v}}^2 \right) \\
	\leq & C'\left( 1 + \tfrac{1}{\fc}\right)^2  d_k \left( 
	 \oneSNorm[G \setminus \Gamma^{(K)}]{v}^2 +   \sum_{j=k+1}^{K} \left( 1+\fc \right)^{j} C_j \zeroNorm[\Gamma_j\cap G]{ \jump{v}}^2 \right)
\end{align*}
with constants $c', C'$ depending on 
the space dimension~$d$,
shape regularity $\gamma$ of $\Omega^{(k)}$,
shape regularity $\sigma$ of $\cT^{(k)}$,
and the constant $\delta$ in \eqref{eq:HKDK}.
Summation over $G \in \Omega^{(k)}$  yields
\[
\zeroNorm[\Gamma_l ]{v- \PHk v}^2  \leq  C \left( 1 + \tfrac{1}{\fc}\right)^2  d_k \HNorm{v}^2
\]
and the assertion follows.
\end{proof}

We are ready to state stability of the projections $\PHk$, $k \in \N$.
\begin{theorem}\label{thm:HKSTAB}
Assume that  conditions  \eqref{eq:HKDK} and  \eqref{eq:SMALLCL} are satisfied.
Then the projections  $\PHk: \cH \to \cH_k$, $k \in \N$, are stable in the sense that
\begin{equation} \label{eq:PHKSTAB}
\HNorm{\PHk v}^2 \leq  c\left( 1 + \tfrac{1}{\fc}\right)^3 d_k \left(\sum\limits_{l=1}^{k} (1+\fc)^l C_l \right)  \HNorm{v}^2
\qquad \forall v\in \cH
\end{equation}
holds for each $k\in \N$ with a constant $c$ depending only on the space dimension $d$,  
shape regularity $\gamma$ of $\Omega^{(k)}$, shape regularity $\sigma$ of $\cT^{(k)}$, 
and the constant $\delta$ in \eqref{eq:HKDK}.
\end{theorem}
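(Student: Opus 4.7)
The plan is to bound $\HNorm{\PHk v}^2$ by separately controlling its gradient and jump contributions. Since $\PHk v \in \cH_k$ by construction, the generalized jumps of $\PHk v$ across $\Gamma_l$ vanish for all $l > k$, so the norm truncates to
\[
\HNorm{\PHk v}^2 = \oneSNorm[\Omega\setminus\Gamma]{\PHk v}^2 + \sum_{l=1}^{k} (1+\fc)^l C_l \zeroNorm[\Gamma_l]{\jump{\PHk v}}^2.
\]
The first summand will be handled via the local stability of Lemma~\ref{lem:LOCSTAB}, while the second is where the technical work of Lemma~\ref{lem:cstabHelp} pays off, once combined with a density argument extending its estimate from $\CK$ to $\cH$.

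For the gradient contribution I would decompose cell by cell. On the invariant cells $G\in\Omega^{(k)}_\infty$ the definition~\eqref{eq:PIHKDEF} gives $\PHk v|_G = v|_G$ directly, and on the remaining cells $G\in\Omega^{(k)}\setminus\Omega^{(k)}_\infty$ the pointwise local stability $\oneSNorm[G]{\PHk v}\leq \oneSNorm[G\setminus\Gamma]{v}$ from Lemma~\ref{lem:LOCSTAB} applies. Summation over $\Omega^{(k)}$ then yields $\oneSNorm[\Omega\setminus\Gamma]{\PHk v}^2 \leq \HNorm{v}^2$. For the jump sum I would use the identity $\jump{\PHk v} = \jump{v} - \jump{v-\PHk v}$, apply the pointwise bound $(a-b)^2 \leq 2 a^2 + 2 b^2$, weight by $(1+\fc)^l C_l$, and sum over $l \leq k$. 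This produces
\[
\sum_{l=1}^{k}(1+\fc)^l C_l\zeroNorm[\Gamma_l]{\jump{\PHk v}}^2 \leq 2\HNorm{v}^2 + 2\sum_{l=1}^{k}(1+\fc)^l C_l \zeroNorm[\Gamma_l]{\jump{v-\PHk v}}^2,
\]
where the second right-hand term is exactly the quantity estimated by Lemma~\ref{lem:cstabHelp}, bounded by $C(1+\tfrac{1}{\fc})^2 d_k (\sum_{l=1}^k (1+\fc)^l C_l)\HNorm{v}^2$.

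Adding the two bounds yields an intermediate estimate of the form
\[
\HNorm{\PHk v}^2 \leq 3\HNorm{v}^2 + 2C(1+\tfrac{1}{\fc})^2 d_k \Bigl(\sum_{l=1}^{k}(1+\fc)^l C_l\Bigr)\HNorm{v}^2.
\]
The main remaining step, and the one that requires some care, is absorbing the additive $\HNorm{v}^2$ into the advertised form of~\eqref{eq:PHKSTAB}. Using $C_1 = 1$, so that $\sum_{l=1}^{k}(1+\fc)^l C_l \geq 1+\fc$, and exploiting the a-priori upper bound on $d_k$ (e.g.\ $d_k \leq d_1$, which holds since $d_k$ is monotonically decreasing), one dominates the $\HNorm{v}^2$ term by a multiple of $d_k (\sum_{l=1}^k (1+\fc)^l C_l)\HNorm{v}^2$ at the price of one extra factor $(1+1/\fc)$, which precisely accounts for the upgrade in exponent from $(1+1/\fc)^2$ in Lemma~\ref{lem:cstabHelp} to $(1+1/\fc)^3$ in~\eqref{eq:PHKSTAB}. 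All remaining constants depend only on $d$, the shape regularity parameters $\gamma$ and $\sigma$, and the constant $\delta$, as already tracked in the preparatory lemmas.
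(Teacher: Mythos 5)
Your route is essentially the paper's: both rest on the local stability of $\PHk$ from Lemma~\ref{lem:LOCSTAB} for the gradient part, on Lemma~\ref{lem:cstabHelp} for the weighted jump terms on $\Gamma_l$, $l\leq k$, and on a density argument in $\CK$. The only packaging difference is that the paper bounds $\HNorm{v-\PHk v}$ and then uses the triangle inequality, whereas you split $\HNorm{\PHk v}^2$ directly; up to constants this yields the same intermediate estimate as yours, namely an additive multiple of $\HNorm{v}^2$ plus $C\left(1+\tfrac{1}{\fc}\right)^2 d_k\bigl(\sum_{l=1}^k(1+\fc)^lC_l\bigr)\HNorm{v}^2$ (in the paper the exponent is already $3$ at this stage, because a factor $\left(1+\tfrac{1}{\fc}\right)$ is inserted in its estimate of the jump part of $\HNorm{v-\PHk v}^2$, not because of any absorption).

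The step that does not work as you justify it is the final absorption of the additive $\HNorm{v}^2$. To dominate $\HNorm{v}^2$ by a multiple of $d_k\bigl(\sum_{l=1}^k(1+\fc)^lC_l\bigr)\HNorm{v}^2$ you need a \emph{lower} bound on $d_k\sum_{l=1}^k(1+\fc)^lC_l$, and an upper bound $d_k\leq d_1$ goes the wrong way: with $C_1=1$ you only get $d_k\sum_{l=1}^k(1+\fc)^lC_l\geq(1+\fc)d_k$, which tends to zero by \eqref{eq:DKZERO}, so no $k$-independent constant arises this way, and such a lower bound does not follow from \eqref{eq:HKDK} and \eqref{eq:SMALLCL} alone (it does hold for the model network of Subsection~\ref{subsec:INTERFACE}, where $d_k\sum_{l\leq k}(1+\fc)^lC_l$ stays bounded away from zero). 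To be fair, the paper's own proof stops at the intermediate estimate and leaves the additive term untouched, and for the intended use in Corollary~\ref{cor:UNISTAB} under \eqref{eq:QDEC} the discrepancy is immaterial; but as a derivation of the literal inequality \eqref{eq:PHKSTAB}, your last step is not a valid argument, and the claim that the upgrade from $\left(1+\tfrac{1}{\fc}\right)^2$ to $\left(1+\tfrac{1}{\fc}\right)^3$ is produced by this absorption does not match where that factor actually comes from.
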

\begin{proof}
As $\CK$, $K\in \N$, is dense in $\cH$ and $\PHk$ is continuous for each fixed $k\in \N$,
it is sufficient to prove  \eqref{eq:PHKSTAB}  for $v \in \CK$ with arbitrary $K\geq k$. 
In light of
\[
\HNorm{\PHk v} \leq \| v - \PHk v\|_{\cH_k} + \HNorm{v}
\]
it is sufficient to derive a corresponding bound for $\HNorm{ v - \PHk v }$.
Utilizing boundedness of $\PHk$ with respect to  $| {}\cdot{} |_{1, \Omega \setminus \Gamma}$, cf.  Lemma~\ref{lem:LOCSTAB},
and that, by construction, $\PHk v$ is does not jump across $\Gamma_l$, $l > k$,
we obtain	
\[
\begin{array}{rcl}
	\HNorm{v - \PHk v}^2 &=& \displaystyle  | v - \PHk v |_{1, \Omega\setminus \Gamma}^2 +
	\left( 1 + \tfrac{1}{\fc} \right) \sum_{l=1}^{K} \left( 1+\fc  \right)^{l} C_{l} \zeroNorm[\Gamma_l]{ \jump{v - \PHk v}}^2\\
		&\leq& \displaystyle 4 \HNorm{v}^2 
		+ \left( 1 + \tfrac{1}{\fc} \right) \sum_{l=1}^{k} \left( 1+\fc \right)^{l} C_{l} \zeroNorm[\Gamma_l]{ \jump{v - \PHk v}}^2.
\end{array}
\]
Now the assertion follows from Lemma \ref{lem:cstabHelp}.
\end{proof}

Uniform stability of $\PHk$ is obtained under an additional condition on the geometry of the interface network $\Gamma$.
\begin{corollary} \label{cor:UNISTAB}
Assume that conditions \eqref{eq:HKDK} and \eqref{eq:SMALLCL} are satisfied
and that the additional condition
\begin{equation} \label{eq:QDEC}
 d_k\left( \sum_{l=1}^k(1+ \fc)^l C_l \right) \leq \const, \qquad k \in \N,
\end{equation}
holds with a constant $\const$ independent of $k$.
Then the projections $\PHk$, $k \in \N$, are uniformly stable, i.e., 
\begin{equation} \label{eq:PHKUNISTAB}
\HNorm{\PHk v} \leq  c  \HNorm{v} \qquad \forall v\in \cH
\end{equation}
holds for each $k\in \N$ with a constant $c$ depending only on the space dimension $d$,  
shape regularity $\gamma$ of $\Omega^{(k)}$, shape regularity $\sigma$ of $\cT^{(k)}$, 
the constant $\delta$ in \eqref{eq:HKDK},
the constant $\const$ in \eqref{eq:QDEC}, and the material constant $\fc$.
\end{corollary}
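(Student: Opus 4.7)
The plan is essentially to observe that Corollary~\ref{cor:UNISTAB} follows immediately by inserting the new assumption \eqref{eq:QDEC} into the already-established quantitative stability bound of Theorem~\ref{thm:HKSTAB}. Since Theorem~\ref{thm:HKSTAB} already requires exactly the hypotheses \eqref{eq:HKDK} and \eqref{eq:SMALLCL}, which are carried over into the corollary, all heavy machinery (the local Poincar\'e inequality, the trace lemma, Lemma~\ref{lem:cstabHelp}) has already been invoked and nothing further is needed on that side.

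First, I would simply recall the estimate
\[
\HNorm{\PHk v}^2 \leq c \left(1+\tfrac{1}{\fc}\right)^3 d_k \left(\sum_{l=1}^k (1+\fc)^l C_l\right) \HNorm{v}^2
\]
from Theorem~\ref{thm:HKSTAB}, valid for every $v\in\cH$ and every $k\in\N$, with a constant $c$ depending only on $d$, $\gamma$, $\sigma$, and $\delta$. Then I would apply assumption \eqref{eq:QDEC} to bound the $k$-dependent factor $d_k \sum_{l=1}^k (1+\fc)^l C_l$ uniformly by $\const$. This replaces the potentially $k$-growing quantity on the right-hand side by a single geometric constant, yielding
\[
\HNorm{\PHk v}^2 \leq c \left(1+\tfrac{1}{\fc}\right)^3 \const \, \HNorm{v}^2 \qquad \forall v\in\cH,
\]
which after taking square roots is exactly the claimed uniform estimate \eqref{eq:PHKUNISTAB}.

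The resulting stability constant in \eqref{eq:PHKUNISTAB} is $\bigl(c(1+1/\fc)^3 \const\bigr)^{1/2}$, so it depends on precisely the quantities listed in the statement: the dimension $d$, the shape regularities $\gamma$ and $\sigma$, the equivalence constant $\delta$ from \eqref{eq:HKDK}, the geometric constant $\const$ from \eqref{eq:QDEC}, and the material parameter $\fc$. No dependence on $k$ remains. There is no real obstacle here beyond bookkeeping of constants: the entire difficulty of controlling jumps of $v-\PHk v$ by $\HNorm{v}$ via traces and local Poincar\'e inequalities was already dispatched in Theorem~\ref{thm:HKSTAB}; the corollary merely identifies \eqref{eq:QDEC} as the precise geometric condition that absorbs the remaining $k$-dependence into a fixed constant.
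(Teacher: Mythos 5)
Your proposal is correct and coincides with the paper's intended argument: the corollary is stated as an immediate consequence of Theorem~\ref{thm:HKSTAB}, whose bound $\HNorm{\PHk v}^2 \leq c\,(1+\tfrac{1}{\fc})^3 d_k \bigl(\sum_{l=1}^k(1+\fc)^l C_l\bigr)\HNorm{v}^2$ becomes uniform in $k$ once \eqref{eq:QDEC} absorbs the factor $d_k\sum_{l=1}^k(1+\fc)^l C_l$ into $\const$. Your bookkeeping of the resulting constant and its dependencies matches the statement.
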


The additional condition \eqref{eq:QDEC} reflects the fact that the jump contributions to $\HNorm{\PHk v}$
cannot be bounded by the jump contributions to $\HNorm{v}$ (see \cite[???]{podlesny20} for a simple counterexample).
Relating the material constant $\fc$ to the geometry of the interface network,
it  implies that the  interfaces $\Gamma^{(k)}$ are highly localized for feasible $\fc>0$
and thus excludes, e.g., the Cantor network~\cite{heida2017fractal,turcotte1994crustal,turcotte1997fractals}.
For example, the highly localized network described in Subsection~\ref{subsec:INTERFACE} above
satisfies condition \eqref{eq:QDEC} for  $\fc \leq 1$.

\subsection{Quasi-interpolation on finite element spaces $\cS_k$}
We now  construct and analyse suitable  projections $\PSk: \cH_k\to \cS_k$, utilizing well-known concepts 
from finite element analyis.
\begin{definition}
For every $k\in \N$, we define the  Cl\'ement-type quasi-interpolation\\ ${\PSk:\cH_k \to \cS_k}$ by setting
\begin{equation} \label{eq:PSKDEF}
 \PSk v = \sum_{p\in \cN^{(k)}} \left(\Pi_pv\right)\; \lambda_p^{(k)}
\end{equation}
with $ \Pi_p: \cH_k \to \R$ defined by
\begin{equation}
\Pi_p v =  \fint_{\omega_p} v\; dx,\qquad {\omega_p} = \text{\rm supp }\lambda_p^{(k)},\quad p\in \cN^{(k)},
\end{equation}
for $v\in \cH_k$.
\end{definition}

\begin{proposition} \label{prop:SKAPPROX}
Let $k\in \N$ and $G \in \Omega^{(k)}$. Then the  projection $\PSk$ defined in \eqref{eq:PSKDEF} 
has the local approximation property
\begin{equation} \label{eq:DAPPROX}
 \zeroNorm[G]{v - \PSk v}  \leq  c  h_k  \oneSNorm[G]{v} \qquad \forall v\in \cH_k
\end{equation}
with a constant $c$ depending only on the dimension $d$ and shape regularity $\sigma$ of $\cT^{(k)}$.
\end{proposition}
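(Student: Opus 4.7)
\emph{Plan of proof.} The central observation is that all jumps encoded in $\langle\cdot,\cdot\rangle_k$ are supported on $\Gamma^{(k)}\subset\bigcup_{G\in\Omega^{(k)}}\partial G$, so the restriction $v|_G$ of any $v\in\cH_k$ belongs to $H^1(G)$ in the classical sense. Moreover, by construction each basis function $\lambda_p^{(k)}$ with $p\in\cN_G^{(k)}$ has $\omega_p=\mathrm{supp}\,\lambda_p^{(k)}\subset\overline{G}$, and the family $\{\lambda_p^{(k)}\}_{p\in\cN_G^{(k)}}$ is the standard nodal basis for piecewise linear finite elements on the shape-regular triangulation $\cT_G^{(k)}$ of $G$ with homogeneous Dirichlet condition on $\partial G\cap\partial\Omega$. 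Consequently the estimate reduces to a classical Cl\'ement-type interpolation bound for $v|_G\in H^1(G)$ on the mesh $\cT_G^{(k)}$ in the spirit of~\cite{clement1975approximation,verfurth1999error}.

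To make this explicit, I would first split
\[
\zeroNorm[G]{v-\PSk v}^2 = \sum_{T\in\cT_G^{(k)}}\zeroNorm[T]{v-\PSk v}^2
\]
and bound each local contribution. For a simplex $T$ all of whose vertices lie in $\cN_G^{(k)}$, the partition of unity $\sum_{p\in\overline{T}\cap\cN_G^{(k)}}\lambda_p^{(k)}\equiv 1$ on $T$ yields
\[
(v-\PSk v)|_T = \sum_{p\in\overline{T}\cap\cN_G^{(k)}}(v-\Pi_p v)\,\lambda_p^{(k)},
\]
and since $0\le\lambda_p^{(k)}\le 1$, the triangle inequality together with $T\subset\omega_p$ reduces the task to bounding $\zeroNorm[\omega_p]{v-\Pi_p v}$. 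As $\Pi_p v$ is by definition the mean of $v$ on $\omega_p$, this is controlled by $ch_k\oneSNorm[\omega_p]{v}$ via the classical Poincar\'e inequality on the shape-regular patch $\omega_p$, with the constant $c$ depending only on $d$ and $\sigma$. Summing over the vertices of $T$ and then over $T\in\cT_G^{(k)}$, and using that each patch $\omega_p\subset\overline{G}$ overlaps only a bounded (in terms of $\sigma$) number of simplices, produces the required bound.

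Extra care is needed only for simplices with one or more vertices on $\partial\Omega$: such vertices are excluded from $\cN^{(k)}$, so the above partition of unity is incomplete. Here I would invoke $v|_{\partial\Omega}\equiv 0$ and replace the Poincar\'e inequality on the affected patch by a Friedrichs-type inequality (equivalently, formally setting $\Pi_p v:=0$ for $p\in\partial\Omega\cap\overline{T}$ recovers the partition-of-unity argument verbatim). The main obstacle is thus not mathematical but notational: one must verify that all constants emerging from reference-element scaling depend only on $d$ and $\sigma$, which is standard, and that the patches $\omega_p$ really stay inside $\overline{G}$, which is ensured by the definition $\cS_k=\mathrm{span}\{\lambda_p^{(k)}\mid p\in\cN^{(k)}\}$ with cell-wise extension by zero. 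The fractal interfaces play no role at this stage precisely because $\PSk$ acts on $\cH_k$ rather than on $\cH$, and all relevant jumps are confined to $\partial G$.
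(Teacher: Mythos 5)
Your proposal is correct and follows essentially the same route as the paper: the paper simply cites the standard element-wise Cl\'ement estimate $\zeroNorm[T]{v-\PSk v}^2\leq C h_k^2\sum_{p\in T\cap\cN_G^{(k)}}\oneSNorm[\omega_p]{v}^2$ from~\cite{carstensen2006clement,verfurth1999error} and sums over $T\in\cT_G^{(k)}$, which is exactly the partition-of-unity/patch-Poincar\'e (and Friedrichs at $\partial\Omega$) argument you spell out. Your explicit observations --- that $v|_G\in H^1(G)$ for $v\in\cH_k$ and that the patches $\omega_p$ stay inside $\overline G$, so the interfaces are invisible at this stage --- are precisely why the citation applies.
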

\begin{proof}
 Let $v\in \cH_k$, $G\in \Omega^{(k)}$, and $T\in \cT_G^{(k)}\subset \cT^{(k)}$. Then 
 \[
 \zeroNorm[T]{v - \PSk v}^2\leq C h_k^2\sum_{p\in T\cap \cN_G^{(k)}} \oneSNorm[\omega_p]{v}^2
 \]
holds  with a constant $C$ depending only on the dimension $d$ and shape regularity $\sigma$ of 
$\cT^{(k)}$~\cite{carstensen2006clement,verfurth1999error}.
The assertion now follows by summation over $T \in \cT_G^{(k)}$.
\end{proof}

\begin{proposition} \label{prop:SKSTAB}
The  projections $\PSk$, $k\in \N$, defined in \eqref{eq:PSKDEF} 
are stable in the sense that
\begin{equation} \label{eq:DAPPROX}
 \HNorm{\PSk v}  \leq  c   d_k \left(\sum_{l=1}^k (1+ \fc)^l C_l\right) \HNorm{v} \qquad \forall v\in \cH_k
\end{equation}
holds with a constant $c$ depending only on the dimension $d$ and shape regularity $\sigma$ of $\cT^{(k)}$.
\end{proposition}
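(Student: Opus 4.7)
I plan to split
\[
\HNorm{\PSk v}^2 \;=\; \oneSNorm[\Omega\setminus\Gamma]{\PSk v}^2 \;+\; \sum_{l=1}^{k} (1+\fc)^{l} C_{l}\,\zeroNorm[\Gamma_l]{\jump{\PSk v}}^2
\]
into gradient and weighted jump contributions and handle each separately. The crucial observation that unlocks the classical finite-element toolbox is that the broken structure of $\cS_k$ and the duplication of nodes on $\Gamma^{(k)}$ force $\omega_p \subset \overline{G}$ for every $p \in \cN_G^{(k)}$. Hence, on each cell $G \in \Omega^{(k)}$ the operator $\PSk$ is nothing but a standard Cl\'ement-type quasi-interpolation onto piecewise linears over the shape-regular local triangulation $\cT_G^{(k)}$, for which the classical cell-local $L^{2}$- and $H^{1}$-stability as well as first-order approximation estimates~\cite{carstensen2006clement,clement1975approximation,verfurth1999error} are directly available. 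Summing the local $H^{1}$-stability cell by cell immediately yields $\oneSNorm[\Omega\setminus\Gamma]{\PSk v} \leq c\,\oneSNorm[\Omega\setminus\Gamma]{v} \leq c\,\HNorm{v}$.

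For the jump term, on each interface piece $E \subset \Gamma_l \cap \partial G \cap \partial G'$ I would use the elementary decomposition
\[
\jump{\PSk v}\vert_E \;=\; \jump{v}\vert_E \;+\; (v-\PSk v)\vert_{G'}\vert_E \;-\; (v-\PSk v)\vert_G\vert_E,
\]
which reduces matters to estimating the $L^{2}(\Gamma_l \cap \partial G)$-traces of the approximation error $w := v-\PSk v$ from either adjacent cell. Since $w\vert_G \in H^{1}(G)$, the standard trace inequality provides
\[
\zeroNorm[\Gamma_l \cap \partial G]{w}^2 \;\leq\; c\bigl(h_k^{-1}\zeroNorm[G]{w}^2 + h_k\,\oneSNorm[G]{w}^2\bigr),
\]
into which I would insert the local approximation estimate of Proposition~\ref{prop:SKAPPROX} ($\zeroNorm[G]{w} \leq c\,h_k\,\oneSNorm[G]{v}$) together with the cell-local $H^{1}$-stability of $\PSk$ (which bounds $\oneSNorm[G]{w}$ by $\oneSNorm[G]{v}$). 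Both contributions then collapse to $c\,h_k\,\oneSNorm[G]{v}^2$, which by~\eqref{eq:HKDK} is at most $c\,d_k\,\oneSNorm[G]{v}^2$. Summing over cells, weighting by $(1+\fc)^{l} C_{l}$, summing over $l = 1,\dots,k$, and combining with the direct bound $\sum_{l=1}^{k}(1+\fc)^{l} C_{l}\zeroNorm[\Gamma_l]{\jump v}^2 \leq \HNorm{v}^2$ for the first term of the decomposition then delivers the stated stability bound, the factor $d_k\sum_{l=1}^{k}(1+\fc)^{l} C_{l}$ arising entirely from the summed interface contributions (and becoming uniform in $k$ once the self-similarity hypothesis~\eqref{eq:QDEC} is in force, as in Corollary~\ref{cor:UNISTAB}).

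The main obstacle lies in the trace/approximation interplay of step (iii): a na\"ive inverse-trace estimate applied to the finite element function $\PSk v$ alone would produce a harmful $h_k^{-1}$, and it is only by first subtracting $v$ and invoking the approximation property that this is converted into the benign $h_k$. This manoeuvre succeeds precisely because the cell-local structure $\omega_p \subset \overline{G}$ prevents the broken quasi-interpolation from mixing contributions across interfaces, mirroring the role played by the analogous locality in the stability analysis of $\PHk$ in Theorem~\ref{thm:HKSTAB} and Lemma~\ref{lem:cstabHelp}. Beyond this, the argument reduces to the same bookkeeping of the weights $(1+\fc)^l C_l$ across all interface levels that already appeared there.
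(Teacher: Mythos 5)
Your proof is correct and takes essentially the same route as the paper: both split $\HNorm{\PSk v}^2$ into the broken gradient part (handled by cell-local Cl\'ement $H^1$-stability, legitimate precisely because $\omega_p\subset\overline G$) and the weighted jump part, which is reduced via $\jump{\PSk v}=\jump{v}-\jump{v-\PSk v}$ to interface traces of the local quasi-interpolation error of size $c\,h_k\oneSNorm[G]{v}^2$ per cell, then summed with the weights $(1+\fc)^l C_l$ and converted with $h_k\leq d_k$ from \eqref{eq:HKDK}. The only cosmetic difference is that the paper cites the standard Cl\'ement face estimate $\int_E|v_i-\PSk v_i|^2\,dE\leq c\sum_p h_k\oneSNorm[\omega_p]{v_i}^2$ directly from the literature, whereas you re-derive it from a scaled trace inequality combined with Proposition~\ref{prop:SKAPPROX} and local $H^1$-stability.
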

\begin{proof}
 Let $v \in \cH_k$ and observe that 
 \begin{equation} \label{eq:TRIANGC}
 \HNorm{\PSk v}^2 \leq 2 \HNorm{v}^2 +  \oneSNorm[\Omega\setminus \Gamma^{(k)}]{\PSk v}^2
 + 2 \sum_{l=1}^k (1+ \fc)^l C_l \zeroNorm[\Gamma_l]{\jump{v- \PSk v}}^2 
 \end{equation}
 follows from the triangle inequality and the Cauchy-Schwarz inequality.
It is well-known, e.g., from~\cite[Theorem 2.4]{carstensen2006clement} that 
 \begin{equation} \label{eq:HSKLOCB}
 \oneSNorm[\Omega\setminus \Gamma^{(k)}]{\PSk v}^2 = \sum_{G\in \Omega^{(k)}} \oneSNorm[G]{\PSk v}^2
 \leq c \sum_{G\in \Omega^{(k)}} \oneSNorm[G]{v}^2 = c \oneSNorm[\Omega\setminus \Gamma^{(k)}]{v}^2 \leq c\HNorm{v}^2 
 \end{equation}
 holds with a constant $c$ depending only on shape regularity $\sigma$ of $\cT^{(k)}$ and the space dimension~$d$.
 We now derive a corresponding bound for the jump terms occurring in \eqref{eq:TRIANGC}.
As $\cT^{(k)}$ resolves the interface network  $\Gamma^{(k)}$ according to \eqref{eq:GAMMARES}, 
there are subsets $\cE^{(k)}_l\subset \cE^{(k)}$ such that
\[
\Gamma_l= \bigcup_{E \in \cE^{(k)}_l} E,\quad l= 1,\dots,k.
\]
Now let  $E \subset \overline{G}_{E,1}\cap \overline{G}_{E,2} \subset \Gamma_l$ with $G_{E,i}\in \Omega^{(k)}$, $i=1,2$,
and we set $v_i = v|_{G_{E,i}}$, $i=1,2$. Then the Cauchy-Schwarz inequality yields
\begin{equation} \label{eq:JUMPSEP}
\zeroNorm[\Gamma_l]{\jump{v- \PSk v}}^2 = \sum_{E \in \cE^{(k)}_l}\int_{E}\jump{v- \PSk v}^2\; d E
\leq 2 \sum_{E \in \cE^{(k)}_l}\int_{E}|v_1- \PSk v_1|^2 + |v_2- \PSk v_2|^2\; dE.
\end{equation}
It is well-known~\cite{carstensen2006clement,verfurth1999error} that
\[
\int_{E}|v_i- \PSk v_i|^2 \; dE \leq c \sum_{p\in \cN_{E,i}} h_k \oneSNorm[\omega_p]{v_i}^2, \quad i=1,2,
\]
holds with $\omega_p=\text{\rm supp }\lambda_p$, 
$\cN_{E,i} = E \cap \cN^{(k)}_{G_{E,i}}$ denoting the vertices of $E$ located in $\overline{G}_{E,i}$,
and a constant $c$ depending only on shape regularity $\sigma$ of $\cT^{(k)}$ and the space dimension 
$d$.
After inserting this bound into \eqref{eq:JUMPSEP}, summation over $l=1,\dots,k$, and  shape regularity of $\cT^{(k)}$
leads to
\[
\sum_{l=1}^k (1+ \fc)^l C_l \zeroNorm[\Gamma_l]{\jump{v- \PSk v}}^2 \leq c h_k \left(\sum_{l=1}^k (1+ \fc)^l C_l\right) \oneSNorm{v}^2
\]
with $c$ only depending on $\sigma$ and $d$ and the assertion follows from \eqref{eq:HKDK}.
\end{proof}

Note  that  uniform stability of $\PSk$, $k\in \N$, is obtained under the additional assumption \eqref{eq:QDEC}. 

\begin{definition}
 For every $k\in \N$, we define the projection 
 \begin{equation}\label{eq:PIDEF}
  \Pi_k = \PSk \circ \PHk: \cH \to \cS_k.
 \end{equation}
\end{definition}

\begin{theorem} \label{theo:UNIAPPROXALL}
Assume  that the conditions \eqref{eq:HKDK},  \eqref{eq:SMALLCL}, \eqref{eq:QDEC} hold.
Then the projections  $\Pi_k: \cH \to \cS_k$, $k\in \N$, defined in \eqref{eq:PIDEF} have the approximation property
\begin{equation} \label{eq:APPROX}
 \zeroNorm{v - \Pi_k v}  \leq  c  h_k  \HNorm{v} \qquad \forall v\in \cH
\end{equation}
with a constant $c$ depending only on the space dimension $d$, 
shape regularity $\gamma$ of $\Omega^{(k)}$, shape regularity  $\sigma$ of  $\cT^{(k)}$,
the constant $\delta$ in \eqref{eq:HKDK},  the constant $\const$ in \eqref{eq:QDEC},
and the material constant~$\fc$.
\end{theorem}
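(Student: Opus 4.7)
The plan is to split the error by the triangle inequality,
\[
\zeroNorm{v - \Pi_k v} \leq \zeroNorm{v - \PHk v} + \zeroNorm{\PHk v - \PSk(\PHk v)},
\]
and estimate the two contributions separately using the approximation and stability results already established for the two factors $\PHk$ and $\PSk$ that make up $\Pi_k$. No new analytic ingredient is required; everything reduces to a careful composition of Theorems \ref{thm:HKAPPROX}, Corollary \ref{cor:UNISTAB}, and Proposition \ref{prop:SKAPPROX}.

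For the first summand I would invoke Theorem \ref{thm:HKAPPROX}, which is applicable because \eqref{eq:SMALLCL} is among the assumptions, to obtain
\[
\zeroNorm{v - \PHk v} \leq c\, d_k\, \HNorm{v},
\]
and then convert $d_k$ into $h_k$ by means of \eqref{eq:HKDK} in the form $d_k \leq h_k/\delta$. For the second summand I would observe that $w := \PHk v$ lies in $\cH_k$, apply the local approximation property of $\PSk$ from Proposition \ref{prop:SKAPPROX} on each $G \in \Omega^{(k)}$, square and sum over $G$, and use $\oneSNorm[G]{w} \leq \HNorm{w}$ to obtain
\[
\zeroNorm{\PHk v - \PSk \PHk v}^2 \leq c\, h_k^2 \oneSNorm[\Omega \setminus \Gamma^{(k)}]{\PHk v}^2 \leq c\, h_k^2\, \HNorm{\PHk v}^2.
\]
At this point the decisive ingredient is the uniform stability of $\PHk$: Corollary \ref{cor:UNISTAB}, valid because all three conditions \eqref{eq:HKDK}, \eqref{eq:SMALLCL}, \eqref{eq:QDEC} are in force, yields $\HNorm{\PHk v} \leq c\, \HNorm{v}$, and the second summand is thereby absorbed into $c\, h_k\, \HNorm{v}$ as well.

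The substantive point is not in the final composition but in the choice of tools: the additional geometric assumption \eqref{eq:QDEC} does not enter directly through the approximation estimate but only implicitly through the uniform stability of $\PHk$. Were only the $k$-dependent bound of Theorem \ref{thm:HKSTAB} available, the second summand would inherit the amplification factor $d_k \sum_{l=1}^k (1+\fc)^l C_l$, destroying the uniform $h_k$-order estimate. Hence I expect the main obstacle to be essentially bookkeeping: tracking the dependence of constants on $\fc$, $\gamma$, $\sigma$, $\delta$, and $\const$ through the composition, and verifying that uniform stability rather than merely boundedness of $\PHk$ is what drives the second step.
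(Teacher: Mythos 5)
Your proposal is correct and follows exactly the paper's route: the same triangle-inequality split $\zeroNorm{v-\Pi_k v}\leq \zeroNorm{v-\PHk v}+\zeroNorm{\PHk v-\PSk(\PHk v)}$, with the first term handled by Theorem~\ref{thm:HKAPPROX} (and \eqref{eq:HKDK} to trade $d_k$ for $h_k$) and the second by Proposition~\ref{prop:SKAPPROX} combined with the uniform stability of $\PHk$ from Corollary~\ref{cor:UNISTAB}. Your remark that \eqref{eq:QDEC} enters only through this uniform stability, rather than through the approximation estimate itself, is precisely the point the paper's terse proof leaves implicit.
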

\begin{proof}
The assertion is an immediate consequence of the triangle inequality
\[
\zeroNorm{v - \Pi_k v}\leq \zeroNorm{v - \Pi_{\cH_k} v} + \zeroNorm{\Pi_{\cH_k} v - \Pi_{\cS_k} \left(\Pi_{\cH_k}  v \right)},
\]
Theorem~\ref{thm:HKAPPROX}, Proposition~\ref{prop:SKAPPROX}, and Corollary~\ref{cor:UNISTAB}.
\end{proof}

Uniform stability of the projections $\Pi_k$ is an immediate consequence 
of Corollary~\ref{cor:UNISTAB} and Proposition~\ref{prop:SKSTAB}.
\begin{theorem} \label{theo:UNISTABALL}
Assume  that the conditions \eqref{eq:HKDK},  \eqref{eq:SMALLCL}, \eqref{eq:QDEC} hold.
Then the  projections $\Pi_k: \cH \to \cS_k$, $k\in \N$, defined in \eqref{eq:PIDEF} are uniformly stable in the sense that
\begin{equation} \label{eq:APPROX}
 \HNorm{\Pi_k v}  \leq  c  \HNorm{v} \qquad \forall v\in \cH
\end{equation}
holds with a constant $c$ depending only on the space dimension $d$, 
shape regularity $\gamma$ of $\Omega^{(k)}$, shape regularity  $\sigma$ of $\cT^{(k)}$, 
the constant $\delta$ in \eqref{eq:HKDK}, the constant $\const$ in \eqref{eq:QDEC},
and the material constant~$\fc$.
\end{theorem}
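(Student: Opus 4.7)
The plan is to exploit the factorization $\Pi_k = \Pi_{\cS_k} \circ \Pi_{\cH_k}$ from \eqref{eq:PIDEF} and simply chain the two previously established uniform stability bounds. By construction, for any $v \in \cH$ the intermediate image $w := \Pi_{\cH_k} v$ lies in $\cH_k$, so it is admissible as input for $\Pi_{\cS_k}$ and we have $\HNorm{\Pi_k v} = \HNorm{\Pi_{\cS_k} w}$.

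First I would apply Proposition~\ref{prop:SKSTAB} to $w$, yielding
\[
\HNorm{\Pi_{\cS_k} w} \leq c_1 \, d_k \left(\sum_{l=1}^k (1+\fc)^l C_l\right) \HNorm{w},
\]
with $c_1$ depending only on $d$ and on the shape regularity $\sigma$ of $\cT^{(k)}$. The key observation is that the geometric prefactor $d_k \sum_{l=1}^k (1+\fc)^l C_l$ is exactly the quantity bounded by the constant $\const$ under the assumption~\eqref{eq:QDEC}. Hence the prefactor degenerates into the $k$-independent constant $c_1 \const$.

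Next I would invoke Corollary~\ref{cor:UNISTAB}, whose hypotheses are precisely \eqref{eq:HKDK} and \eqref{eq:SMALLCL} (both assumed here), and which gives a $k$-independent bound $\HNorm{\Pi_{\cH_k} v} \leq c_2 \HNorm{v}$, where $c_2$ depends only on the quantities $d$, $\gamma$, $\sigma$, $\delta$, $\const$, and $\fc$ listed in the theorem.

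Chaining the two bounds delivers
\[
\HNorm{\Pi_k v} \leq c_1 \const \, \HNorm{w} = c_1 \const \, \HNorm{\Pi_{\cH_k} v} \leq c_1 \const \, c_2 \, \HNorm{v},
\]
which is the claimed estimate with $c = c_1 \const \, c_2$, exhibiting only the stated dependencies. There is no real obstacle in this argument; all the analytic work (the local Poincar\'e inequalities on balls and cells, the trace lemma on $\partial G$, and the jump control in Lemma~\ref{lem:cstabHelp}) has already been absorbed into Corollary~\ref{cor:UNISTAB} and Proposition~\ref{prop:SKSTAB}. The only point worth highlighting is that \eqref{eq:QDEC} plays a double role: it is needed both to pass from the $k$-dependent bound of Theorem~\ref{thm:HKSTAB} to the uniform Corollary~\ref{cor:UNISTAB} and again to convert the prefactor in Proposition~\ref{prop:SKSTAB} into a constant independent of $k$.
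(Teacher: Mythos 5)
Your argument is correct and is exactly the paper's proof: the paper states that Theorem~\ref{theo:UNISTABALL} is an immediate consequence of Corollary~\ref{cor:UNISTAB} and Proposition~\ref{prop:SKSTAB}, with condition \eqref{eq:QDEC} turning the prefactor $d_k\sum_{l=1}^k(1+\fc)^lC_l$ into the constant $\const$, just as you chain the two bounds. Your remark on the double role of \eqref{eq:QDEC} matches the paper's own comment following Proposition~\ref{prop:SKSTAB}.
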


\section{Multiscale finite element discretization} \label{sec:MULTSCALE}
For some fixed $k \in \N$, we now construct novel multiscale finite element spaces
with the same dimension as $\cS_k$ that provide discretization errors of order $h_k$.
Utilizing the projection $\Pi_k: \cH \to \cS_k$ defined in \eqref{eq:PIDEF},
we can readily apply local orthogonal decomposition (LOD) 
as introduced by M{\aa}lqvist \& Peterseim~\cite{maalqvist2014localization}
with  localization by subspace decomposition as suggested in~\cite{kornhuber2018analysis}.

Let $\cV_k = \text{ker } \Pi_k \subset \cH$  denote the kernel of $\Pi_k$ and $\OP: \cH \to \cV_k$ the orthogonal projection
of $\cH$ onto $\cV_k$ with respect to the scalar product $a(\cdot, \cdot)$ in $\cH$. Then the multiscale finite element space
\[
\cW_k = \{ v - \OP v\;|\; v \in \cH \}= \{ v - \OP v\;|\; v \in \cS_k \} = \text{span}\{(I-\OP)\lambda_p^{(k)}\;|\; p\in \cN_k\}
\]
is isomorphic to $\cS_k$. We consider the multiscale discretization
\begin{equation} \label{eq:WANALGLOB}
u_k \in \cW_k: \qquad a(u_k,v) = (f, v)\qquad \forall v \in \cW_k .
\end{equation}
The following error analysis is due to Peterseim~\cite{peterseim2016variational} 
and~M{\aa}lqvist \& Peterseim~\cite{maalqvist2014localization} (see also~\cite{kornhuber2018analysis}).
\begin{theorem} \label{theo:CDISCRET}
The unique solution $u_k$ of the discrete problem~\eqref{eq:WANALGLOB} 
is given by
\begin{equation} \label{eq:WSOLREP}
u_k= (I- \OP) \Pi_k u.
\end{equation}
The discretization error has the representation $u-u_k = Cu$ and the error estimate
\[
\HNorm{u-u_k} \leq c h_k \zeroNorm{f} 
\]
holds with $c$ depending only on the constants appearing in Theorems~\ref{theo:UNIAPPROXALL}, \ref{theo:UNISTABALL},
 and the ellipticity constant $\fa$ from \eqref{eq:ELLIP}.
\end{theorem}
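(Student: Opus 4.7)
The plan is to follow the standard LOD analysis, adapted to the present setting. The key structural ingredient is the $a$-orthogonal splitting $\cH = \cW_k \oplus \cV_k$: by definition $\cW_k = (I - \OP)\cH$ and $\cV_k = \ker \Pi_k$, and since $\OP$ is the $a$-orthogonal projection onto $\cV_k$, any $w \in \cW_k$ is $a$-orthogonal to every $v \in \cV_k$. This makes $\cW_k$ a closed subspace of $\cH$, so Lax--Milgram applied to $a(\cdot,\cdot)$ on $\cW_k$ immediately yields existence and uniqueness of $u_k$.

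For the representation \eqref{eq:WSOLREP}, I would first observe that $(I-\OP)u \in \cW_k$ and that for any $v \in \cW_k$ we have $a(\OP u, v) = 0$, hence
\[
a\bigl((I-\OP)u,\,v\bigr) = a(u,v) = (f,v) \qquad \forall v \in \cW_k,
\]
so by uniqueness $u_k = (I-\OP)u$. To replace $u$ by $\Pi_k u$, I would note that $\Pi_k$ is a projection, so $\Pi_k(u - \Pi_k u) = 0$, i.e.\ $u - \Pi_k u \in \ker \Pi_k = \cV_k$. Since $\OP$ acts as the identity on $\cV_k$, it follows that $(I-\OP)(u - \Pi_k u) = 0$ and therefore $u_k = (I-\OP)u = (I-\OP)\Pi_k u$. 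The error representation $u - u_k = u - (I-\OP)u = \OP u$ is then immediate (and this is the meaning of $u - u_k = \OP u$, with $\OP = \mathcal{C}$).

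For the quantitative bound, the key step is a short energy-duality argument. Using ellipticity \eqref{eq:ELLIP}, that $\OP u \in \cV_k = \ker \Pi_k$, and the defining equation for $u$,
\[
\fa \HNorm{\OP u}^2 \le a(\OP u,\OP u) = a(u, \OP u) = (f,\OP u).
\]
Since $\Pi_k(\OP u) = 0$, we can rewrite $(f,\OP u) = (f,\OP u - \Pi_k \OP u)$, and the Cauchy--Schwarz inequality together with the $L^2$-approximation property of $\Pi_k$ from Theorem~\ref{theo:UNIAPPROXALL} yields
\[
(f,\OP u - \Pi_k \OP u) \le \zeroNorm{f}\,\zeroNorm{\OP u - \Pi_k \OP u} \le c\, h_k\, \zeroNorm{f}\,\HNorm{\OP u}.
\]
Dividing by $\HNorm{\OP u}$ gives the claimed estimate with $c/\fa$.

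The main obstacle is essentially conceptual rather than technical: one must check that the $a$-orthogonal decomposition really works in the fractal Hilbert space $\cH$, which reduces to verifying that $\cV_k = \ker \Pi_k$ is closed (automatic from continuity of $\Pi_k$, guaranteed by Theorem~\ref{theo:UNISTABALL}) and that $\Pi_k$ is a genuine projection onto $\cS_k$ with the $L^2$-approximation property of Theorem~\ref{theo:UNIAPPROXALL}. Once these are in place, the proof is identical to the standard LOD template of~\cite{maalqvist2014localization,peterseim2016variational,kornhuber2018analysis}, and nothing specific to the fractal geometry enters beyond what is already encoded in the projection $\Pi_k$.
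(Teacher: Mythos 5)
Your proof is correct and is exactly the standard LOD argument (orthogonal splitting $\cH=\cW_k\oplus_a\cV_k$, test with $\cW_k$ to get $u_k=(I-\OP)u=(I-\OP)\Pi_k u$, then the ellipticity--Galerkin-orthogonality--approximation chain for $\HNorm{\OP u}$) that the paper itself does not spell out but delegates to~\cite{maalqvist2014localization,peterseim2016variational,kornhuber2018analysis}. The one point you rightly flag as needing verification --- that $\Pi_k=\PSk\circ\PHk$ is idempotent and continuous so that $u-\Pi_k u\in\ker\Pi_k$ and $\cV_k$ is closed --- is precisely what the paper's Theorems~\ref{theo:UNIAPPROXALL} and~\ref{theo:UNISTABALL} are set up to provide, so nothing further is missing.
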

In spite of these desired properties, the space $\cW_k$ is problematic, 
because its multiscale basis functions $(I-\OP)\lambda_p^{(k)}$, $p\in \cN_k$, in general have global support.
We therefore consider (intensionally local) approximations $\OP_{\nu}: \cH \to \cH$, $\nu \in \N$,  of $\OP$
giving rise to the approximate subspaces
\[
\cW_k^{(\nu)} = \text{span}\{(I-\OP_\nu)\lambda_p^{(k)}\;|\; p\in \cN\}
\]
and corresponding Galerkin discretizations
\begin{equation} \label{eq:WANAL}
u_k^{(\nu)} \in \cW_k^{(\nu)}: \qquad a(u_k^{(\nu)},v) = (f, v)\qquad \forall v \in \cW_k^{(\nu)} .
\end{equation}
The following discretization error estimate is taken from~\cite{kornhuber2018analysis}.

\begin{theorem} \label{theo:CAPPROX}
Assume that the approximations  $\OP_{\nu}: \cH \to \cH$, $\nu \in \N$,  of $\OP$ 
are convergent in the sense that 
\begin{equation} \label{eq:CNVCL}
\aNorm{\OP v - \OP_{\nu} v } \leq q \aNorm{\OP v }, \qquad  \nu \in \N_k,
\end{equation}
holds  for all $v \in \cH$ with some convergence rate $q <1$.
Then we have the discretization error estimate
\begin{equation} \label{eq:DISCAPPROXW}
\HNorm{ u - u_k^{{(\nu)}} } \leq  \left(1 + q^{\nu}\right) \tfrac{\fA}{\fa}\HNorm{u - u_k} + q^{\nu}\tfrac{\fA}{\fa}\HNorm{u - \Pi_k u}, \qquad \nu \in \N.
\end{equation}
\end{theorem}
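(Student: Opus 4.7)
The plan is to apply C\'ea's lemma in the energy norm $\aNorm{\cdot}$, for which $u_k^{(\nu)}$ is by construction the $a$-orthogonal Galerkin projection of $u$ onto $\cW_k^{(\nu)}$. This gives $\aNorm{u - u_k^{(\nu)}} \leq \aNorm{u - v}$ for every $v \in \cW_k^{(\nu)}$. I would test with the carefully chosen candidate $v := (I - \OP_\nu)\Pi_k u$, which lies in $\cW_k^{(\nu)}$ since $\Pi_k u \in \cS_k$.

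The key algebraic step is to invoke the representation $u_k = (I-\OP)\Pi_k u$ from Theorem~\ref{theo:CDISCRET} and to rewrite
\[
u - v \;=\; (u - u_k) + \bigl((I-\OP)\Pi_k u - (I-\OP_\nu)\Pi_k u\bigr) \;=\; (u - u_k) - (\OP - \OP_\nu)\Pi_k u .
\]
The triangle inequality in $\aNorm{\cdot}$ combined with the convergence assumption \eqref{eq:CNVCL} applied to $\Pi_k u$ (with the contraction factor read as $q^\nu$ for the $\nu$-fold approximation $\OP_\nu$) yields
\[
\aNorm{u - v} \;\leq\; \aNorm{u - u_k} + q^\nu \aNorm{\OP \Pi_k u} .
\]
Since $\OP \Pi_k u = \Pi_k u - (I-\OP)\Pi_k u = \Pi_k u - u_k$, a further triangle inequality gives $\aNorm{\OP \Pi_k u} \leq \aNorm{u - \Pi_k u} + \aNorm{u - u_k}$.

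Putting these bounds together produces
\[
\aNorm{u - u_k^{(\nu)}} \;\leq\; (1+q^\nu)\aNorm{u - u_k} + q^\nu \aNorm{u - \Pi_k u},
\]
and the desired inequality follows by converting to $\HNorm{\cdot}$ via the ellipticity estimates $\fa \HNorm{\cdot}^2 \leq \aNorm{\cdot}^2 \leq \fA\HNorm{\cdot}^2$ from \eqref{eq:ELLIP}, which introduces at most the factor $\fA/\fa$ on the right-hand side.

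The whole argument hinges on the choice of Ansatz, and that is also where the only real difficulty lies: the test element $v = (I-\OP_\nu)\Pi_k u$ is engineered precisely so that, after substituting $u_k = (I-\OP)\Pi_k u$, the error $u-v$ decomposes as the ideal-LOD contribution $u - u_k$ plus a corrector-defect term $(\OP - \OP_\nu)\Pi_k u$ to which hypothesis \eqref{eq:CNVCL} directly applies. Any less structured Ansatz would couple the two contributions and prevent the clean split visible in \eqref{eq:DISCAPPROXW}; once the right test element is found, the rest reduces to two triangle inequalities and a norm equivalence.
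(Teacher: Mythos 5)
Your proposal is correct and follows essentially the same route as the paper: test the Galerkin best-approximation property with $v=(I-\OP_\nu)\Pi_k u$, use $u_k=(I-\OP)\Pi_k u$ to split $u-v=(u-u_k)-(\OP-\OP_\nu)\Pi_k u$, apply \eqref{eq:CNVCL} (read with factor $q^\nu$) and the bound $\aNorm{\OP\Pi_k u}\leq\aNorm{u-u_k}+\aNorm{u-\Pi_k u}$, then pass to $\HNorm{\cdot}$ via \eqref{eq:ELLIP}. No gaps; the argument matches the paper's proof step for step.
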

\begin{proof}
Exploiting $(I - \OP_\nu ) \Pi_k u \in \cW_k^{(\nu)}$ and \eqref{eq:WSOLREP}, we obtain
\[
\aNorm{u - u_k^{(\nu)} } \leq \aNorm{ u - \left (I - \OP_\nu \right) \Pi_k u } = \aNorm{(u-u_k) - ( \OP \Pi_k u - \OP_\nu \Pi_k u)}.
\]
Convergence \eqref{eq:CNVCL} together with  identity \eqref{eq:WSOLREP} provides
\[
\aNorm{\OP \Pi_k u - \OP_\nu \Pi_k u } \leq q^{\nu} \aNorm{ \OP \Pi_k u }\leq q^{\nu}(\aNorm{u - u_k }+ \aNorm{ u - \Pi_k u } ).
\]
Now the assertion follows from the triangle inequality and the norm equivalence~\eqref{eq:ELLIP}.
\end{proof}

We now concentrate on the construction of  convergent local approximations  
$\OP_{\nu}: \cH \to \cH$, $\nu \in \N$,  by local subspace correction.
Here, we make heavy use of the fact that the kernel $\cV_k$ of $\Pi_k$ is  high-frequency.
Locality   \eqref{eq:PIHKDEF}, \eqref{eq:PSKDEF} of the 
projection $\Pi_k = \PSk \circ \PHk$ motivates  the splitting 
\begin{equation} \label{eq:SPLITTING}
\cV_k = \sum_{G \in \Omega^{(k)}}  \cV_G
\end{equation}
into the subspaces
\[
\cV_G = \{ (I-\Pi_k) v|_G \;|\; v \in \cH \} \subset \cV_k, \qquad G \in \Omega^{(k)}.
\] 
Here,  $v|_G$ is defined by $v|_G(x) = v(x)$ for $x\in G$ and $v|_G(x)=0$ otherwise.
Note that the linear mapping $\cH \ni v \to v|_G\in \cH$ 
is uniformly bounded  in $\cH$  for  all $G \in \Omega^{(k)}$ and each fixed $k\in \N$
as a consequence of the trace Lemma~\ref{cor:traceCorollary}
and the continuous embedding of $\cH$ into $L^2(\Omega)$. 
The subspaces $\cV_G$ are closed, 
because convergence of a sequence $(v_i)_{i \in \N}\subset \cV_G \subset \cV_k$
to some $v \in \cH$ implies $v \in \cV_k$, i.e., $\Pi_k v= 0$, as $\cV_k$ is closed,
$v = v|_G$, as $\text{supp }v_i\subset G$ for all $i \in \N$,
and therefore $v = (I - \Pi_k)v|_G \in \cV_G$.
The following lemma is the main result of this section.
\begin{lemma} \label{lem:SPLITTSTAB}
The splitting \eqref{eq:SPLITTING} is stable in the sense that for each $v\in \cV_k$ there  is a decomposition 
$(v_G)_{G \in \Omega^{(k)}}$ of $v$ with $v_G \in \cV_G$, $G \in \Omega^{(k)}$, such that
\begin{equation} \label{eq:K1}
\sum_{G \in \Omega^{(k)}} \aNorm{v_G}^2 \leq K_1 \aNorm{v}^2
\end{equation}
holds with a constant $K_1$ depending only on
the constants appearing  in Theorems~\ref{theo:UNIAPPROXALL}, \ref{theo:UNISTABALL},
the geometric constant $C_0$ in \eqref{eq:RKSUP} and  the ellipticity constants $\fa$, $\fA$ from \eqref{eq:ELLIP}.

Assume that for all $k \in \N$ and each $G$ in $\Omega^{(k)}$ the number of neighboring cells of $G$ from $\Omega^{(k)}$ is uniformly bounded by $c_N \in \R$.
Then the  splitting \eqref{eq:SPLITTING} is  bounded in the sense that for each $v\in \cV_k$  
all decompositions  $(v_G)_{G \in \Omega^{(k)}}$  of $v$ with $v_G \in \cV_G$, $G \in \Omega^{(k)}$, satisfy
\begin{equation} \label{eq:K2}
\aNorm{v}^2 \leq K_2 \sum_{G \in \Omega^{(k)}} \aNorm{v_G}^2 
\end{equation}
with a constant $K_2$ depending only on $c_N$.
\end{lemma}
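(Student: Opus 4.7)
The natural candidate for the stable decomposition is $v_G := (I-\Pi_k)(v|_G)$, where $v|_G \in \cH$ denotes the extension of $v$ by zero outside $G$. By construction $v_G \in \cV_G$, and since $\Pi_k v = 0$ for $v\in \cV_k$ one checks
\[
\sum_{G \in \Omega^{(k)}} v_G
= (I-\Pi_k)\Bigl(\sum_{G \in \Omega^{(k)}} v|_G\Bigr)
= (I-\Pi_k) v = v.
\]
A first observation, useful for both parts, is that every element of $\cV_G$ has support in $\overline{G}$: indeed, for any $w \in \cH$, the local minimization \eqref{eq:PIHKDEF} shows $\PHk(w|_G)$ vanishes on cells $G'\neq G$, and since the nodal basis of $\cS_k$ is cell--local (nodes on interfaces are duplicated), $\PSk\circ\PHk$ preserves this support property, so $(I-\Pi_k)(w|_G)$ is supported in $\overline{G}$.

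For the stability bound \eqref{eq:K1}, the plan is to combine norm equivalence \eqref{eq:ELLIP} and the uniform stability of $\Pi_k$ from Theorem~\ref{theo:UNISTABALL} to obtain $\sum_G \aNorm{v_G}^2 \leq C \sum_G \HNorm{v|_G}^2$, and then to show $\sum_G \HNorm{v|_G}^2 \leq C' \HNorm{v}^2$. Decomposing $\HNorm{v|_G}^2$ into the broken gradient contribution, the jump contributions on $\Gamma_l$ for $l>k$ (which coincide with $\jump{v}$ on $\Gamma_l\cap G$), and the jumps on $\Gamma_l$ for $l\le k$, the first two parts sum straightforwardly to terms bounded by $\HNorm{v}^2$. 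The main obstacle is the third part: since $v|_G$ is zero outside $\overline{G}$, its jump across a face $E\subset \Gamma_l\cap \partial G$ equals the one-sided trace $v|_G\vert_E$, so the contribution becomes $\sum_{l\le k}(1+\fc)^l C_l \sum_G \int_{\Gamma_l\cap \partial G} v^2$. I would bound this by applying the trace Lemma~\ref{cor:traceCorollary} cell--wise and summing, obtaining three terms: one proportional to $d_k \oneSNorm[\Omega\setminus\Gamma]{v}^2 \cdot \sum_l (1+\fc)^l C_l$, one proportional to $d_k^{-1}\zeroNorm{v}^2 \cdot \sum_l (1+\fc)^l C_l$, and a cross term $\sum_{l\le k}(1+\fc)^l C_l \sum_{j>k}(1+\fc)^{j-k} C_{k,j}\int_{\Gamma_j}\jump{v}^2$. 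Assumption \eqref{eq:QDEC} absorbs the first term; the cross term is handled using $C_{k,j}\le r_k C_j$ together with \eqref{eq:SMALLCL} and \eqref{eq:RKSUP} to produce a uniform constant times $\sum_{j>k}(1+\fc)^j C_j \int_{\Gamma_j}\jump{v}^2 \le \HNorm{v}^2$. The truly delicate ingredient is the middle term $\frac{C_\Gamma}{d_k^2}\zeroNorm{v}^2$: here I exploit that $v\in \cV_k$, so Theorem~\ref{theo:UNIAPPROXALL} gives $\zeroNorm{v}= \zeroNorm{v-\Pi_k v} \le c\, h_k \HNorm{v}\le c\, d_k \HNorm{v}$ by \eqref{eq:HKDK}, which exactly cancels the $d_k^{-2}$ factor.

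For the boundedness bound \eqref{eq:K2}, the support observation above makes the argument almost formal. Given any decomposition $v=\sum_G v_G$ with $v_G\in \cV_G$, the gradient part of $a(v_G,v_{G'})$ vanishes whenever $G\ne G'$ (disjoint open supports), and the jump part on $\Gamma_l$ ($l\le k$) only survives when the two cells share a face in $\Gamma_l$, i.e.\ when $G$ and $G'$ are neighbors. Expanding
\[
\aNorm{v}^2 = \sum_{G,G'} a(v_G,v_{G'})
\]
and applying the elementary inequality $|a(v_G,v_{G'})| \le \tfrac{1}{2}(\aNorm{v_G}^2+\aNorm{v_{G'}}^2)$ only to the at most $c_N+1$ surviving partners of each $G$ yields $\aNorm{v}^2 \le (c_N+1)\sum_G \aNorm{v_G}^2$, so $K_2=c_N+1$ suffices.
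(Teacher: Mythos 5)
Your proposal is correct and follows essentially the same route as the paper: the same local components $v_G=(I-\Pi_k)(v|_G)$, the same cell-wise use of the trace Lemma~\ref{cor:traceCorollary} combined with the geometric conditions \eqref{eq:SMALLCL}, \eqref{eq:RKSUP}, \eqref{eq:QDEC}, the same decisive step of absorbing the $d_k^{-2}\zeroNorm{v}^2$ term via the approximation property of $\Pi_k$ and $\Pi_k v=0$, and the same Cauchy--Schwarz/neighbor-counting argument for \eqref{eq:K2}. The only cosmetic deviation is that you pass to $\sum_{G}\HNorm{v|_G}^2$ via the global stability of Theorem~\ref{theo:UNISTABALL} rather than estimating $v-\Pi_k v$ locally as the paper does, which amounts to the same thing since $\Pi_k v=0$ on $\cV_k$ (the paper additionally makes the density reduction to $v\in\cV_k\cap\cH_K$ explicit, which your sketch leaves implicit).
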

\begin{proof}
Boundedness \eqref{eq:K2} 
with a constant $K_2$ depending only on the maximal number of neighbors of each cell $G$ 
is a direct consequence of the Cauchy-Schwarz inequality. 

By a density argument, it is sufficient to show  \eqref{eq:K1} for  $v \in \cV_k \cap \cH_K$.
We consider the splitting of $v$ into its local components
\[
v_G = (I - \Pi_k)v|_G\in \cV_G, \qquad G \in \Omega^{(k)}.
\]
Exploiting the locality of $\Pi_k$, i.e., $(I - \Pi_k)(v|_G) = \left((I - \Pi_k)v\right)|_G$, we have
\begin{equation} \label{eq:VGSU}
\HNorm{v_G}^2 = 
\oneSNorm[G \setminus \Gamma^{(K)}]{v - \Pi_k v}^2 
+ \sum_{j=1}^k (1+\fc)^j C_j \zeroNorm[\Gamma_j\cap \partial G]{v - \Pi_k v}^2
+ \sum_{j=k+1}^K (1+ \fc)^j C_j\zeroNorm[\Gamma_j \cap G]{\jump{v}}^2.
\end{equation}
As a consequence of the Cauchy-Schwarz inequality, Lemma~\ref{lem:LOCSTAB}
and  the local boundedness~\eqref{eq:HSKLOCB} of $\PSk$, we have 
\begin{equation} \label{eq:LOCBS}
\oneSNorm[G \setminus \Gamma^{(K)}]{v - \Pi_k v}^2 \leq C \oneSNorm[G \setminus \Gamma^{(K)}]{v}^2
\end{equation}
with a constant $C$ depending only on the space dimension $d$ and shape regularity $\sigma$ of $\cT^{(k)}$.
After utilizing the trace Lemma~\ref{cor:traceCorollary}, we apply local boundedness  \eqref{eq:LOCBS},
and the geometric conditions  \eqref{eq:RKSUP}, \eqref{eq:SMALLCL}, and \eqref{eq:QDEC} to obtain 
\begin{align*}
\sum_{j=1}^k(1+\fc)^j C_j \zeroNorm[\Gamma_j\cap \partial G]{v - \Pi_k v}^2
& \leq C' \left( d_k^{-2} \zeroNorm[G]{v - \Pi_k v}^2 + 
\oneSNorm[G \setminus \Gamma^{(K)}]{v}^2 + 
\sum_{j=k+1}^K (1+\fc)^j C_j \zeroNorm[\Gamma_j \cap G]{\jump{v}}^2  \right)
\end{align*}
with  $C'$ additionally depending on the material constant $\fc$, the constant $\delta$ in \eqref{eq:HKDK} and the constants
appearing in  \eqref{eq:RKSUP} and  \eqref{eq:QDEC}.
After inserting the above estimates in \eqref{eq:VGSU}, summation over $G$ and \eqref{eq:HKDK} lead to
\[
\sum_{G \in \Omega^{(k)}}\HNorm{v_G}^2 \leq  C'' \left( h_k^{-2} \zeroNorm{v - \Pi_k v}^2 + \HNorm{v}^2 \right ).
\]
Now the approximation property stated in Proposition~\ref{theo:UNIAPPROXALL} 
together with the norm equivalence~\eqref{eq:ELLIP} concludes the proof.
\end{proof}

Let $P_G : \cH \to \cV_G$, $G \in \Omega^{(k)}$, denote the a-orthogonal Ritz projections defined by
\begin{equation} \label{eq:LOCRITZ}
P_G w \in \cV_G: \qquad a(P_G w, v) = a(w, v)  \qquad \forall  v \in \cV_G
\end{equation}
for $w \in \cH$ and 
\[
T = \sum_{G \in \Omega^{(k)}}P_G
\]
the resulting preconditioner. Lemma~\ref{lem:SPLITTSTAB} implies 
\begin{equation} \label{eq:KOND}
1/K_1 a(v,v)  \leq a(Tv,v) \leq K_2 a(v,v) \qquad \forall v \in \cV_k
\end{equation}
or, equivalently, the bound $\kappa \leq K_1 K_2$ of the condition number 
$\kappa= \aNorm{T}\aNorm{T^{-1}}$  of $T$ restricted to $\cV_k$.
We consider straightforward damped Richardson iteration 
\begin{equation} \label{eq:RICHARDSON}
\OP_{\nu + 1} = \OP_\nu + \omega T( I  - \OP_\nu), \qquad \OP_0 = 0,
\end{equation}
with a  suitable damping factor $\omega$. Note that $\OP_{\nu}v \in \cV_k$, $\nu \in \N$, holds for any $v \in  \cH$.
Now  convergence  of  \eqref{eq:RICHARDSON} follows by well-known arguments.
\begin{theorem} \label{theo:RICHARDSON}
Assume that for all $k \in \N$ and each $G$ in $\Omega^{(k)}$ the number of neighboring cells of $G$ from $\Omega^{(k)}$ is uniformly bounded by $c_N \in \R$.
Then the approximations $\OP_\nu$, $\nu \in \N$, of $\OP$ defined in \eqref{eq:RICHARDSON} 
are convergent for  $\omega < 2/K_2$  in the sense of \eqref{eq:CNVCL},
and we have  $q= 1 -1/K_1K_2$ for the optimal damping factor $\omega = 1/K_2$
with $K_1$, $K_2$ depending only on
the constants appearing  in Theorems~\ref{theo:UNIAPPROXALL}, \ref{theo:UNISTABALL},
the geometric constant $C_0$ in \eqref{eq:RKSUP}, $c_N$, and  the ellipticity constants $\fa$, $\fA$ from \eqref{eq:ELLIP}.
\end{theorem}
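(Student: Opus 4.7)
The plan is to reduce the Richardson iteration to a linear contraction on $\cV_k$ and then read off the rate from the spectral bounds \eqref{eq:KOND} that Lemma~\ref{lem:SPLITTSTAB} provides.

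First I would collect three structural observations. \emph{(i)}~Since each $P_G$ maps into $\cV_G \subset \cV_k$, we have $T(\cH) \subset \cV_k$, so by induction $\OP_\nu v \in \cV_k$ for every $v \in \cH$ and $\nu \in \N$. \emph{(ii)}~For any $v \in \cH$, the identity $a(v - \OP v, z) = 0$ for all $z \in \cV_k \supset \cV_G$ together with the defining equation of $P_G$ yields $P_G v = P_G \OP v$, and hence $T v = T \OP v$. \emph{(iii)}~Each $P_G$ is $a$-self-adjoint on $\cH$: for $u, w \in \cH$ the defining equation applied with test function $P_G u \in \cV_G$ gives $a(P_G w, P_G u) = a(w, P_G u)$, and symmetry of $a$ together with the same argument applied to $u$ leads to $a(P_G u, w) = a(P_G u, P_G w) = a(u, P_G w)$. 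Consequently $T$ is $a$-self-adjoint.

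Next I would introduce the error $e_\nu := \OP v - \OP_\nu v \in \cV_k$. Using \emph{(ii)} I would compute
\begin{equation*}
    e_{\nu+1} = e_\nu - \omega T(v - \OP_\nu v) = e_\nu - \omega T(\OP v - \OP_\nu v) = (I - \omega T)\, e_\nu,
\end{equation*}
so $e_\nu = (I - \omega T)^\nu \OP v$ in $\cV_k$. By \emph{(iii)} the restriction $T|_{\cV_k} : \cV_k \to \cV_k$ is $a$-self-adjoint, and \eqref{eq:KOND} says its spectrum lies in $[1/K_1,\, K_2]$. Functional calculus for $a$-self-adjoint operators on the Hilbert space $(\cV_k, a(\cdot,\cdot))$ therefore yields
\begin{equation*}
    \aNorm{(I - \omega T)\, w} \leq \max\bigl\{\lvert 1 - \omega / K_1\rvert,\; \lvert 1 - \omega K_2\rvert\bigr\}\, \aNorm{w} \qquad \forall w \in \cV_k,
\end{equation*}
which is strictly less than $\aNorm{w}$ precisely when $0 < \omega < 2/K_2$; this establishes convergence in the sense of \eqref{eq:CNVCL}. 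Choosing $\omega = 1/K_2$ annihilates the second term and leaves $q = 1 - 1/(K_1 K_2)$ as asserted, with the dependencies of $K_1$ and $K_2$ on the listed constants inherited verbatim from Lemma~\ref{lem:SPLITTSTAB}.

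There is no substantive obstacle beyond what has already been done. The one delicate ingredient is the fixed-point identity $P_G v = P_G \OP v$, which converts the seemingly nonlinear recursion into a linear iteration on $\cV_k$; once that is in place, the argument is the textbook convergence theory for preconditioned Richardson iteration with an $a$-self-adjoint, spectrally equivalent preconditioner, and all the real work is packaged in the stability and boundedness estimates of Lemma~\ref{lem:SPLITTSTAB}.
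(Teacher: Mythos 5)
Your proof is correct and is precisely the ``well-known argument'' the paper invokes without writing out: the spectral bounds \eqref{eq:KOND} supplied by Lemma~\ref{lem:SPLITTSTAB} combined with the standard contraction analysis of damped Richardson iteration for the $a$-self-adjoint operator $T$ restricted to $\cV_k$. Your identity $Tv = T\OP v$, which linearizes the recursion to $e_{\nu+1} = (I - \omega T)e_\nu$ in $\cV_k$, is exactly the right reduction, and the resulting rate $\max\bigl\{\abs{1-\omega/K_1},\,\abs{1-\omega K_2}\bigr\}$, equal to $1 - 1/(K_1K_2)$ at $\omega = 1/K_2$, matches the theorem.
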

More sophisticated iterative schemes with better convergence rates are discussed, e.g.,  in \cite{kornhuber2018analysis}.

Utilizing Theorems~\ref{theo:CDISCRET} and \ref{theo:CAPPROX}, the desired discretization error estimate
\[
\HNorm{u- u_k^{{(\nu)}} } = \cO(h_k)
\]
is obtained by choosing $\nu \in \N$ such that the stopping criterion  $q^\nu \tfrac{\fA}{\fa}\HNorm{u -  \Pi_k u} = \cO(h_k)$ is fulfilled.

Note that the support of the first iterate $(I - \OP_1) \lambda_p^{(k)} = (I  -  \omega T) \lambda_p^{(k)}$ 
is contained in $\overline{G}$, if $p$ is located in $G$ and contained in $\overline{G}\cup \overline{G'}$, 
if $p\in \overline{G}\cap \Gamma_k\cap \overline{G'}$.
Similarly,  the support of the approximate multiscale basis functions 
$(I - \OP_\nu ) \lambda_p^{(k)} = (I  -  \omega T)^\nu \lambda_p^{(k)})$, $p \in \cN_k$, 
spreads  at most by one layer of cells in each iteration step
and therefore depends logarithmically on the prescribed accuracy of order $h_k$.

The construction of $\cW_k^{(\nu)}$ requires the successive solution of local problems~\eqref{eq:LOCRITZ} 
in the infinite dimensional function spaces $\cV_G$.
In order to derive a computationally feasible analogue of the  multiscale finite element discretization~\eqref{eq:WANAL},
we start from a typically very large, maybe computationally inaccessible finite element space $\cS$ 
associated with a very strong refinement $\cT$ of $\cT^{(k)}$ that resolves  all  fine scale features of the multiscale
interface problem as necessary to provide the desired accuracy of order $h_k$.
Proceeding literally as  above with $\cH$ replaced by $\cS$, we obtain discrete versions of 
Theorems~\ref{theo:CDISCRET}, \ref{theo:CAPPROX}, and \ref{theo:RICHARDSON},
 where the iteration \eqref{eq:RICHARDSON} takes the form of a damped block Jacobi iteration. 

\section{Iterative subspace correction} \label{sec:SUBSPAC}
We now consider the construction and convergence analysis of  
subspace correction methods for  the fractal interface problem~\eqref{eq:FRACP} together with
computationally feasible discrete versions for $k$-scale finite element approximations~\eqref{eq:FED}.
Their convergence rates neither depend on the scales $k\in \N$ nor on the meshsize $h_k$.

The starting point is the two-level splitting 
\begin{equation} \label{eq:TWOSCALE}
\cH = \cV_0 + \sum_{G \in \Omega^{(k)}} \cV_G
\end{equation}
with 
\[
\cV_0= \cS_{\ell}, \qquad \cV_G = \{ v|_G\;|\; v \in \cH \}, \quad G \in \Omega^{(k)}
\]
and $1 \leq \ell < k$. In particular, each $v \in \cH$ can be decomposed into its local components
\[
v_\ell = \Pi_{\ell} v \in \cS_{\ell}, \qquad v_G = (v- \Pi_{\ell} v)|_G \in \cV_G, \quad G \in \Omega^{(k)}.
\]
Utilizing stability and approximation properties of $\Pi_{\ell}:\cH \to \cS_{\ell}$,
stability  and boundedness of the splitting \eqref{eq:TWOSCALE} 
with corresponding  constants $K_1$ and $K_2$ 
follows by similar arguments as in the proof of Lemma~\ref{lem:SPLITTSTAB}.
Therefore, the corresponding preconditioner 
\[
T = T_0+ \sum_{G \in \Omega^{(k)}}P_G
\]
with Ritz projections $P_0: \cH \to \cV_0$ and $P_G : \cH \to \cV_G$, $G \in \Omega^{(k)}$, respectively,
admits the bound $\kappa \leq K_1 K_2$ of the condition number $\kappa$ of $T: \cH \to \cH$.
This property directly entails corresponding bounds for the convergence rates of  preconditioned 
linear and nonlinear iterative schemes like Richardson or conjugate gradient methods.

In order to describe a sequential subspace correction method induced by the splitting \eqref{eq:TWOSCALE},
we introduce a numbering $\{G_1,\dots, G_m\}= \Omega^{(k)}$ of the cells 
and of the corresponding subspaces $\cV_i= \cV_{G_i}$ and Ritz projections $P_i= P_{\cV_i}$, $i=1,\dots,m$.  
We now consider the linear iteration
\begin{equation} \label{eq:MGCONT}
w_{0}= u^{{(\nu)}}, \quad w_{i+1} = w_i + P_{m-i} (u - w_i),\; i=0,\dots, m, \quad  u^{(\nu+1)} = w_{m+1},  
\end{equation}
for $\nu=0,1,\dots$ with arbitrary given iterate $u^{(0)}\in \cH$. 
Instead of boundedness \eqref{eq:K2}, convergence of \eqref{eq:MGCONT} 
relies on the following Cauchy-Schwarz-type inequality.
\begin{lemma}
Assume that for all $k \in \N$ and each $G$ in $\Omega^{(k)}$ the number of neighboring cells of $G$ from $\Omega^{(k)}$ is uniformly bounded by $c_N \in \R$.
Then the Cauchy-Schwarz-type inequality 
\[
\sum_{i, j=0}^m a( v_i , w_j)  \leq K_3 \left( \sum_{i =0}^m a( v_i , v_i) \right)^{1/2}  \left( \sum_{j=0}^m a( w_j , w_j) \right)^{1/2} 
\]
holds for all $v_i \in \cV_i$, $w_j \in \cV_j$, $i,j = 0, \dots, m$,
with a constant $K_3$ depending only on $c_N$.
\end{lemma}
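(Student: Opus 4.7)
The plan is to split the double sum $\sum_{i,j=0}^m a(v_i,w_j)$ according to whether the indices refer to the coarse space $\cV_0 = \cS_\ell$ (index $0$) or to one of the cell subspaces $\cV_i$ with $i \geq 1$, and within each block to exploit the locality of the bilinear form $a(\cdot,\cdot)$ together with Cauchy--Schwarz.

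For the fine-fine block I would first observe that, whenever $i,j \geq 1$ with $i \neq j$, the quantity $a(v_i, w_j)$ can be non-zero only if $G_i$ and $G_j$ share a face on some interface $\Gamma_l$, $l \leq k$: disjoint open supports kill the gradient contribution, and the product $\jump{v_i}\jump{w_j}$ on $\Gamma_l$ vanishes off $\partial G_i \cap \partial G_j$. Hence each index $i$ contributes at most $1 + c_N$ non-zero summands. Combining this sparsity with the elementary bound $|a(v_i, w_j)| \leq a(v_i,v_i)^{1/2} a(w_j,w_j)^{1/2}$ (which holds because $a$ is an inner product on $\cH$) and a discrete Cauchy--Schwarz over the pairs $(i,j)$ in the cell adjacency graph will lead to
\begin{equation*}
\sum_{i,j \geq 1}|a(v_i, w_j)| \leq (1+c_N) \Bigl(\sum_{i \geq 1}a(v_i,v_i)\Bigr)^{1/2}\Bigl(\sum_{j \geq 1}a(w_j,w_j)\Bigr)^{1/2}.
\end{equation*}

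The coarse-fine cross terms are the main technical point, since $v_0 \in \cS_\ell$ is genuinely global and no such sparsity is available. Here I would apply Cauchy--Schwarz term-by-term in each integral entering $a(v_0, w_j)$ to obtain
\begin{equation*}
|a(v_0, w_j)|^2 \leq \Bigl(\int_{G_j} A\nabla v_0 \cdot \nabla v_0 \, dx + \sum_{l=1}^k (1+\fc)^l C_l \int_{\Gamma_l \cap \partial G_j} B \jump{v_0}^2 \, d\Gamma_l\Bigr)\, a(w_j, w_j).
\end{equation*}
Since the gradient integrals over distinct cells are disjoint in $\Omega$ and every face in $\Gamma^{(k)}$ lies on the boundary of exactly two cells, summing the parenthesized factor over $j$ produces at most $2\, a(v_0,v_0)$. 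A further Cauchy--Schwarz over $j$ then yields $\sum_{j\geq 1}|a(v_0,w_j)| \leq \sqrt{2}\, a(v_0,v_0)^{1/2}\bigl(\sum_{j\geq 1}a(w_j,w_j)\bigr)^{1/2}$. The fine-coarse block is treated symmetrically, and the diagonal term $|a(v_0,w_0)|$ is bounded directly by Cauchy--Schwarz.

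Adding the four block estimates gives the asserted inequality with a constant $K_3$ depending only on $c_N$. I expect the main obstacle to be the coarse-fine block: the finite-overlap structure of the cell boundaries must be invoked carefully to turn the sum of localized energies of $v_0$ back into the single global energy $a(v_0,v_0)$. Once this localization is set up, the remaining estimates are routine doubly-indexed Cauchy--Schwarz manipulations.
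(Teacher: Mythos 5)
Your argument is correct, and it reaches the stated bound with a constant such as $K_3 = c_N + 2 + 2\sqrt{2}$, but it is organized differently from the paper's proof. The paper does not split into coarse/fine blocks; instead it localizes the bilinear form itself, introducing cellwise forms $a_G$ in which the gradient term is restricted to $G$, the interface terms on $\Gamma_j\cap\partial G$, $j\le k$, carry a weight $\tfrac12$ (so that shared faces are not double counted), and the fine interfaces inside $G$ are kept in full, giving the exact partition $\sum_{G\in\Omega^{(k)}}a_G(v,w)=a(v,w)$. On each fixed cell $G$ only the coarse component, the component attached to $G$ itself, and those of its at most $c_N$ neighbors interact, so the local Gram-type matrix is sparse and Gershgorin's theorem yields $\sum_{i,j}a_G(v_i,w_j)\le (c_G+1)\bigl(\sum_i a_G(v_i,v_i)\bigr)^{1/2}\bigl(\sum_j a_G(w_j,w_j)\bigr)^{1/2}$; summation over $G$ and a final Cauchy--Schwarz in $\R^{m+1}$ finish the proof. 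Your route replaces this by (i) the adjacency-sparsity argument for the fine--fine block, and (ii) an explicit treatment of the coarse coupling, where you localize $a(v_0,v_0)$ over cells and absorb the double counting of interface faces into a factor $2$ --- which is, in effect, a hand-made version of the paper's $\tfrac12$-weighted splitting, applied only to $v_0$. The point you flag as the ``main technical obstacle'' is indeed the only place where care is needed, and your handling of it is sound (note that $\jump{v_0}=0$ on $\Gamma_j$ for $j>\ell$, so only the resolved interfaces enter, each shared by exactly two cells). What your version buys is an elementary argument with an explicit constant and no recourse to Gershgorin; what the paper's version buys is uniform treatment of all subspaces, including $\cV_0$, through a single localized form, which scales more cleanly to splittings with more than one global component.
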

\begin{proof}
For some fixed $G \in \Omega^{(k)}$, we introduce the local scalar product
\[
\begin{array}{c}
\displaystyle a_G( v,w) = \int_{G\backslash \Gamma}A\nabla v\cdot\nabla w\; dx +
\tfrac{1}{2}\sum_{j=1}^{k}\left(1+\fc\right)^{j}C_{j}\int_{\Gamma_{j} \cap \partial G}B\jump v\jump w\; d\Gamma_j \\
\displaystyle  \hspace{5cm}+ \sum_{j=k+1}^{\infty}\left(1+\fc\right)^{j}C_{j}\int_{\Gamma_{j} \cap G} B\jump v\jump w\; d\Gamma_j,
\qquad v, w \in \cH,
\end{array}
\]
with the property
\begin{equation} \label{eq:LOCSCA}
\sum_{G \in \Omega^{(k)}}a_G(v,w) = a(v,w), \qquad v, w \in \cH.
\end{equation}
As the common support of $v_i\in \cV_i$ and $w_j\in \cV_j$ is contained in $\overline{G}_i \cap\overline{G}_j$ for $i,j=1,\dots,m$,
the Cauchy-Schwarz inequality and Gershgorin's theorem lead to  
\[
\sum_{i, j =0}^m a_G( v_i, w_j)
\leq (c_G+1) \left( \sum_{i=0}^m a_G(v_i,v_i) \right)^{1/2}  \left(  \sum_{j=0}^m a_G(w_j,w_j) \right)^{1/2}
\]
with $c_G$ denoting the number of neighboring cells of $G$ from $\Omega^{(k)}$.
After summation over $G \in \Omega^{(k)}$,  the Cauchy-Schwarz inequality in $\R^{m+1}$ 
together with  \eqref{eq:LOCSCA} complete the proof.
\end{proof}

The following convergence result is based on the error propagation
\begin{equation} \label{eq:ERRORMG}
u-u^{(\nu+1)}= (I-P_0) \cdots (I-P_m)(u- u^{{(\nu)}}).
\end{equation}
Its proof can be taken literally, e.g.,  from~\cite[Theorem 5.2]{kornhuber2016numerical}.
\begin{theorem} \label{theo:MGCNV}
Assume that for all $k \in \N$ and each $G$ in $\Omega^{(k)}$ 
the number of neighboring cells of $G$ from $\Omega^{(k)}$ is uniformly bounded by $c_N \in \R$.
Then the iterative scheme \eqref{eq:MGCONT} is convergent with respect to the energy norm, and
\[
\aNorm{u - u^{(\nu +1)}}\leq \left(1- \tfrac{1}{K_1K_3^2} \right)\aNorm{u - u^{{(\nu)}}}
\]
holds for any initial iterate $u^{(0)} \in \cH$ with $K_1$, $K_3$ 
depending only on
the constants appearing  in Theorems~\ref{theo:UNIAPPROXALL}, \ref{theo:UNISTABALL},
the geometric constant $C_0$ in \eqref{eq:RKSUP}, $c_N$ and  the ellipticity constants $\fa$, $\fA$ from \eqref{eq:ELLIP}.
\end{theorem}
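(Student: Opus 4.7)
The approach is the standard Xu--Zikatanov convergence analysis for multiplicative subspace correction methods, combining the stable decomposition constant $K_1$ (obtained from a two-level analogue of Lemma~\ref{lem:SPLITTSTAB} applied to the splitting~\eqref{eq:TWOSCALE}) with the strengthened Cauchy--Schwarz constant $K_3$ just proved. In view of the error recurrence~\eqref{eq:ERRORMG}, it is enough to establish the operator bound
\[
\aNorm{E w}^2 \le \Big(1 - \tfrac{1}{K_1 K_3^2}\Big) \aNorm{w}^2 \qquad \forall w \in \cH
\]
for the error propagation operator $E = (I-P_0)(I-P_1)\cdots(I-P_m)$, from which the claimed contraction follows.

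Setting $E_i = (I-P_{m-i+1})\cdots(I-P_m)$ with $E_0 = I$, and $w_i = P_{m-i} E_i w$, the $a$-orthogonality of each $P_i$ (Pythagoras) produces the telescoping identity
\[
\aNorm{w}^2 - \aNorm{E w}^2 = \sum_{i=0}^m \aNorm{w_i}^2,
\]
together with the rewriting $w - E w = \sum_{i=0}^m w_i$. Now pick a stable decomposition $w = \sum_{i=0}^m \tilde w_i$ with $\tilde w_i \in \cV_{m-i}$ and $\sum_i \aNorm{\tilde w_i}^2 \le K_1 \aNorm{w}^2$. Since $P_{m-i}$ is the $a$-orthogonal projection onto $\cV_{m-i}$ and $\tilde w_i \in \cV_{m-i}$, one has $a(E_i w, \tilde w_i) = a(w_i, \tilde w_i)$; combining this with $w - E_i w = \sum_{j<i} w_j$ rearranges $\aNorm{w}^2 = \sum_i a(w, \tilde w_i)$ into
\[
\aNorm{w}^2 = \sum_{i=0}^m a(w_i, \tilde w_i) + \sum_{i=0}^m \sum_{j<i} a(w_j, \tilde w_i).
\]
Controlling the double sum by the strengthened Cauchy--Schwarz inequality just established and the diagonal by ordinary Cauchy--Schwarz then yields
\[
\aNorm{w}^2 \le K_3 \Big(\sum_i \aNorm{w_i}^2\Big)^{1/2} \Big(\sum_i \aNorm{\tilde w_i}^2\Big)^{1/2} \le K_3 \sqrt{K_1}\, \aNorm{w} \Big(\sum_i \aNorm{w_i}^2\Big)^{1/2}.
\]
Squaring and substituting the telescoping identity gives $\aNorm{w}^2 \le K_1 K_3^2 \big(\aNorm{w}^2 - \aNorm{E w}^2\big)$, which rearranges to the claimed contraction rate.

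The delicate point is the treatment of the triangular double sum $\sum_{j \le i} a(w_j, \tilde w_i)$: a sign analysis (or equivalently an absolute-value variant) is needed to apply the unrestricted strengthened Cauchy--Schwarz inequality of the preceding lemma to the triangular index pattern, and one must verify that the two-level splitting~\eqref{eq:TWOSCALE} admits a stable decomposition with constant $K_1$ in the sense required. The latter follows by repeating the argument of Lemma~\ref{lem:SPLITTSTAB} with $\Pi_k$ replaced by $\Pi_{\ell}$, using its stability and approximation properties from Theorems~\ref{theo:UNIAPPROXALL} and~\ref{theo:UNISTABALL}. Once these two-level analogues are in place, the main body of the argument can be copied from \cite[Theorem 5.2]{kornhuber2016numerical}, which is precisely why the authors invoke that reference.
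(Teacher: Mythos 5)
Your proposal is correct and takes essentially the same route as the paper, whose ``proof'' is exactly the standard successive subspace correction argument delegated to \cite[Theorem 5.2]{kornhuber2016numerical}: the error propagation \eqref{eq:ERRORMG}, the two-level analogue of Lemma~\ref{lem:SPLITTSTAB} supplying the stable-decomposition constant $K_1$, and the Cauchy--Schwarz-type lemma supplying $K_3$, combined precisely as you do; the triangular-sum caveat you flag is resolved by the local-support/Gershgorin structure in that lemma's proof, which gives the needed entrywise (absolute-value) bounds with the same constant. The only discrepancy is cosmetic: your argument (like the standard theory behind the citation) yields $\aNorm{u-u^{(\nu+1)}}^2\leq\bigl(1-\tfrac{1}{K_1K_3^2}\bigr)\aNorm{u-u^{(\nu)}}^2$, i.e.\ the contraction factor for the squared energy norm, whereas the theorem as printed attaches that factor to the unsquared norms.
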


We emphasize that the two-level iteration \eqref{eq:MGCONT} is just a simple illustrative example for a subspace correction method
that can be analyzed using the projection operators suggested in Section~\ref{sec:PROJECTIONS}.
More efficient methods can be constructed in a similar way.
For example, a symmetric variant of \eqref{eq:MGCONT} that can be accelerated by conjugate gradients,
is obtained by augmenting each iteration step by additional corrections   $P_i(u - w_{m+1+ i})$
taken in reverse order $i=1,\dots,m$.
For detailed investigations, we refer to~\cite{podlesny20}.

The linear iteration  \eqref{eq:MGCONT} takes place  in  $\cH$ and thus 
requires the successive evaluation of Ritz projections $P_i$ to
infinite dimensional subspaces $\cV_i\subset \cH$, $i=1,\dots,m$. 
However,  replacing $\cH$ by a finite element space $\cS_K$ with some $K \geq k> \ell$,
the above considerations and convergence results literally translate 
to corresponding subspace correction methods 
for the finite element discretization  \eqref{eq:FED} with respect to $\cS_K$.
In particular, the discrete analogue of  \eqref{eq:MGCONT}
leads to a two-grid iteration with block Gau\ss -Seidel smoother on the fine grid $\cT^{(k)}$ that is globally converging 
with  convergence rate independent of the level $K$ and corresponding meshsize~$h_K$ of the discrete solution space $\cS_K$.

\section{Numerical Experiments}
In our two numerical experiments, we consider the finite element discretization  \eqref{eq:FED}
of the fractal interface problem \eqref{eq:FRACP} with $\Omega= (0,1)^2 \subset \R^2$,
$\fc = 1$, the identity matrix $A=I \in \R^{d \times d}$, $B=1$ and two different kinds of fractal interface networks.

In order to illustrate the theoretical findings of Section~\ref{sec:SUBSPAC},
we consider the discrete analogue of  the linear iteration \eqref{eq:MGCONT} in function space,
i.e., the two-grid method with block Gau\ss -Seidel smoother  
as induced by the two-level splitting \eqref{eq:TWOSCALE},
with coarse space  $\cS_\ell = \cS_1$. The fine grid level $k=K$ is selected to
coincide with the level of the underlying discrete solution space $\cS_K$, $K= 1, \dots, K_{\max}$.
We always use the initial iterate $u^{(0)} = u_{\cS_1}$, i.e., the finite element approximation on the coarse grid $\cT^{(1)}$.

In light of the hierarchical lower bound
\[
\HNorm{u _{\cS_{K+1}} - u_{\cS_K}}\leq \HNorm{u - u_{\cS_K}}
\]
 of the discretization error, the algebraic error is reduced up to discretization accuracy
 once the computationally feasible criterion
 \begin{equation} \label{eq:STOCRI}
\HNorm{u _{\cS_{K}} - u_{\cS_K}^{{(\nu)}}} \leq \HNorm{u_{S_{K+1}} - u_{\cS_K}}
\end{equation}
is fulfilled. We will use \eqref{eq:STOCRI} to determine the minimal number of iteration steps
as required to reduce the algebraic error below  discretization accuracy.

%
%

\subsection{Highly localized  interface network}
In our first numerical experiment, we consider the highly localized 
fractal interface network as depicted in Figure~\ref{fig:MODELNET}.
In this case, we have $d_k = \sqrt{2}\;4^{-k}$, $C_k=2^{k}$, and $r_k= 2^{1-k}$.
Hence, conditions \eqref{eq:DKZERO}, \eqref{eq:RKSUP} hold true and
the conditions \eqref{eq:SMALLCL},  \eqref{eq:QDEC} are satisfied for $\fc=1$.

Starting with the triangulation $\cT^{(1)}$ as obtained by two uniform  regular refinements of
the partition $\cT^{(0)}$ consisting of two congruent triangles, the triangulation
$\cT^{(k)}$ results from two uniform regular refinement steps applied to $\cT^{(k-1)}$ for $k=2, 3, \dots$.
We have $h_k = \sqrt{2}\;4^{-k}$ so that \eqref{eq:HKDK}  holds with $\delta = 1$.
For all $k \in \N$ and each $G$ in $\Omega^{(k)}$, the number of neighboring cells 
of $G$ from $\Omega^{(k)}$ is uniformly bounded by $c_N =6$.
As a consequence, the conditions for  uniform stability and approximation property of the projections $\Pi_k$, $k\in \N$, 
as stated in Theorem~\ref{theo:UNIAPPROXALL} and Theorem~\ref{theo:UNISTABALL}, respectively,
and for the uniform convergence result in Theorem~\ref{theo:MGCNV}
are satisfied in this case.

Table~\ref{tab:LOCNET} displays the error reduction factors  
\[
\rho_K^{(\nu)}=\frac{\| u_{\cS_K} - u_{\cS_K}^{(\nu)}\|}{\|u_{\cS_K} - u_{\cS_K}^{(\nu-1)}\|}, \qquad \nu =1, \dots, 9,
\]
together with their geometric mean $\rho_K $ for the levels $K=1,\dots, K_{\max}= 5$.
We observe that the error reduction factors  nicely converge to the convergence rates on each level $K$
and appear to saturate at $0.266$ with increasing  $K$.
According to the criterion \eqref{eq:STOCRI} the  discretization accuracy is  already reached after $3$ steps.

\begin{table}[h]  \label{tab:LOCNET}
\centering
\begin{tabular}{ccccc}
        $\nu$ & $K=2$ & $K=3$ & $K=4$ & $K=5$\\
        \hline
        $1$ & $0.208$ & $0.247$ & $0.252$ & $0.252$\\
        $2$ & $0.221$ & $0.259$ & $0.263$ & $0.263$\\
        $3$ & $0.223$ & $0.261$ & $0.265$ & $0.265$\\
        $4$ & $0.224$ & $0.261$ & $0.266$ & $0.266$\\
        $5$ & $0.224$ & $0.261$ & $0.266$ & $0.266$\\
        $6$ & $0.224$ & $0.261$ & $0.266$ & $0.266$\\
        $7$ & $0.224$ & $0.261$ & $0.266$ & $0.266$\\
        $8$ & $0.224$ & $0.261$ & $0.266$ & $0.266$\\
        $9$ & $0.224$ & $0.261$ & $0.266$ & $0.266$\\
        \hline
        $\rho_K$ & $0.222$ & $0.259$ & $0.264$ & $0.264$\\
        \hline\\
\end{tabular}
\caption{Highly localized interface network: \\Error reduction factors and geometric mean $\rho_K$ 
of two-level subspace correction method} \label{tab:LOCNET}
\end{table}

\subsection{Geologically inspired interface network}
In our second numerical experiment, we consider an interface network mimicking a fractal crystalline structure. 
The triangulation $\cT^{(1)}$ is obtained by four uniform regular refinement steps applied  to
the partition $\cT^{(0)}$ consisting of two congruent triangles, and the triangulation
$\cT^{(k+1)}$ results from uniform regular refinement of $\cT^{(k)}$ for $k=1, 2, \dots$. 
The level-k interfaces are inductively constructed as follows.\\
Let $G_0 = \Omega$ denote the initial cell with center $c = (0.5, 0.5)^T$ and  midpoints $l,t,r,b \in \R^2$ 
of its left, top, right, and bottom boundary. 
The level-$1$ interface $\Gamma_1$, as shown in the left picture of Figure~\ref{fig:CRYSTALNET}, 
then consists of four connected paths of edges in $\cE^{(1)}$ starting with $l,t,r,b$ and ending with $c$. 
These four paths must not self-intersect and must meet in and only in $c$.
With these constraints, the  actual selection of edges is made randomly
 with strong bias towards the straight line connecting the corresponding start and end points. 
 Once $\Gamma^{(1)} = \Gamma_1$ is constructed, 
 centers $c_i= (c_{i,1},c_{i,2})^T$ of the four resulting cells $G_i \in \Omega^{(1)}$, $i=1,\dots, 4$, are determined 
 in a similar way as described above. Each cell $G_i\in \Omega^{(1)}$ is either refined now or never. 
 The decision about refinement or $G_i \in \Omega^{(1)}_{\infty}$ is made randomly 
 according to the probability ${\mathcal P}(\min\{c_{i,1}, c_{i,2}\})$ with density $\rho(\xi)= 2(1-\xi)$, $\xi \in (0,1)$,
 i.e., with a linear bias towards the left and  the lower boundary of $\Omega$.
 In case of refinement, $G_i$ is split into four subcells by four paths of edges in $\cE^{(2)}$
 starting with midpoints of its left, top, right, and bottom boundary  and ending with $c_i$
 in analogy to the splitting of the initial cell $G_0$.
 The union of  all these paths constitutes  the level-2 interface $\Gamma_2$. 
 This procedure is repeated inductively to construct the interface networks $\Gamma_k$, $k=2, \dots,6$
(see Figure~\ref{fig:CRYSTALNET}).

Apparently, the resulting interface network does not satisfy the locality condition  \eqref{eq:QDEC}
and the other conditions stated in  Theorems~\ref{theo:UNIAPPROXALL}, \ref{theo:UNISTABALL}
that are finally sufficient for   the convergence result in Theorem~\ref{theo:MGCNV}  
are also unclear.

\begin{figure}[h]
\includegraphics[width=3cm]{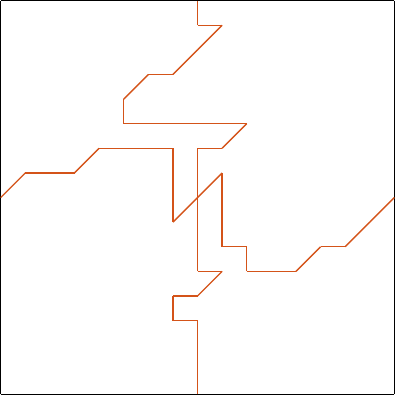}$\quad$
\includegraphics[width=3cm]{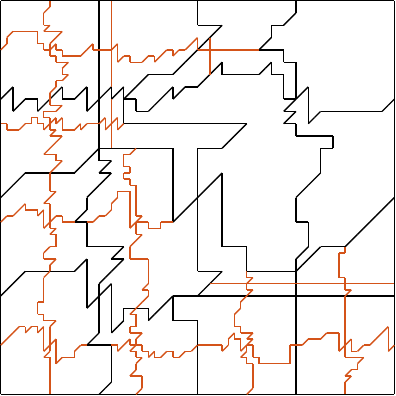}$\quad$
\includegraphics[width=3cm]{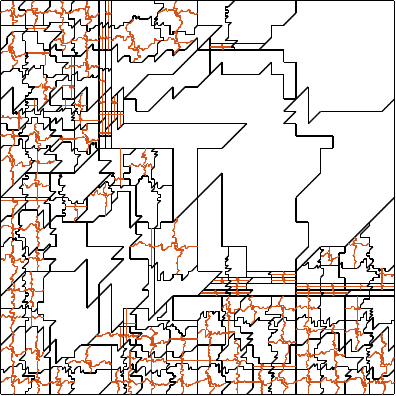}$\quad$
\includegraphics[width=3cm]{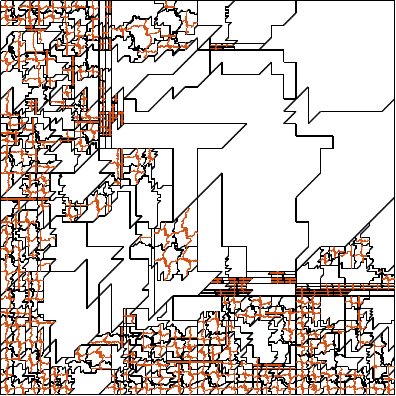}
\caption{\label{fig:CRYSTALNET} 
Geologically inspired interface network in $d=2$ space dimensions: 
 $\Gamma^{(1)}=\Gamma_1$ (red) and $\Gamma^{(k)}$ with $\Gamma_k$ (red) for $k=3,5,6$. }
\end{figure}

\begin{table}[h]  
\centering
\begin{tabular}{cccccc}
        $\nu$ & $K=2$ & $K=3$ & $K=4$ & $K=5$ & $K=6$\\
        \hline
        $1$ & $0.624$ & $0.696$ & $0.732$ & $0.744$ & $0.748$\\
        $2$ & $0.675$ & $0.735$ & $0.766$ & $0.775$ & $0.777$\\
        $3$ & $0.711$ & $0.758$ & $0.781$ & $0.788$ & $0.790$\\
        $4$ & $0.733$ & $0.773$ & $0.791$ & $0.796$ & $0.798$\\
        $5$ & $0.746$ & $0.785$ & $0.798$ & $0.803$ & $0.804$\\
        $6$ & $0.753$ & $0.792$ & $0.804$ & $0.808$ & $0.809$\\
        $7$ & $0.758$ & $0.798$ & $0.809$ & $0.812$ & $0.813$\\
        $8$ & $0.761$ & $0.802$ & $0.813$ & $0.816$ & $0.816$\\
        $9$ & $0.763$ & $0.805$ & $0.816$ & $0.818$ & $0.819$\\
        \hline
        $\rho_K$ & $0.723$ & $0.771$ & $0.790$ & $0.795$ & $0.797$\\
        \hline\\
\end{tabular}

\caption{Geologically inspired interface network:\\ 
Error reduction factors  and geometric mean $\rho_K$ of two-level subspace correction method} \label{tab:GEONET}
\end{table}

Nevertheless, the  error reduction factors as displayed Table~\ref{tab:GEONET} only moderately deteriorate in comparison with the highly localized case and even seem to saturate with increasing level $K$.
According to the criterion \eqref{eq:STOCRI} the  discretization accuracy is  already reached after $5$ steps.

\bibliographystyle{plain}
\bibliography{paper}
\end{document}